\documentclass[11pt,letterpaper]{amsart}
\usepackage{amsmath,amstext,amsthm,amssymb,amsxtra,bbm}
\usepackage[top=1.5in, bottom=1.5in, left=1.25in, right=1.25in]	{geometry}
\usepackage[normalem]{ulem}
\usepackage{txfonts} 
\usepackage[T1]{fontenc}
\usepackage{lmodern}
\usepackage{tikz}\usetikzlibrary{arrows}

\usepackage{euler}   
\usepackage{enumerate}

\makeatletter
\DeclareRobustCommand\widecheck[1]{{\mathpalette\@widecheck{#1}}}
\def\@widecheck#1#2{%
    \setbox\z@\hbox{\m@th$#1#2$}%
    \setbox\tw@\hbox{\m@th$#1%
       \widehat{%
          \vrule\@width\z@\@height\ht\z@
          \vrule\@height\z@\@width\wd\z@}$}%
    \dp\tw@-\ht\z@
    \@tempdima\ht\z@ \advance\@tempdima2\ht\tw@ \divide\@tempdima\thr@@
    \setbox\tw@\hbox{%
       \raise\@tempdima\hbox{\scalebox{1}[-1]{\lower\@tempdima\box
\tw@}}}%
    {\ooalign{\box\tw@ \cr \box\z@}}}
\makeatother

\usepackage{mathtools}
\mathtoolsset{showonlyrefs,showmanualtags}

\usepackage{hyperref} 
\hypersetup{
    colorlinks=true,       
    linkcolor=blue,          
    citecolor=magenta,        
    filecolor=magenta,      
    urlcolor=cyan           
}

\usepackage[msc-links]{amsrefs}

\theoremstyle{plain} 
\newtheorem{lemma}[equation]{Lemma}

\newtheorem{theorem}[equation]{Theorem}
\newtheorem{corollary}[equation]{Corollary}

\theoremstyle{definition}
\newtheorem{definition}[equation]{Definition}

\theoremstyle{remark}
\newtheorem{remark}[equation]{Remark}

\numberwithin{equation}{section}

\title[Discrete-time Sparse Domination on Martingale Spaces]{Weighted Estimation by Discrete-time Sparse Domination on Martingale Spaces}
 \subjclass[2000]{Primary: 60G46 Secondary: 60G42}
 \keywords{Martingale transform, Doob's maximal operator, Sparse domination, Weighted inequality}

\author[W. Chen]{Wei Chen}
\address{School of Mathematics, Yangzhou University, Yangzhou 225002, China}
\email {weichen@yzu.edu.cn}

\author[C. Y. Zhang]{Chaoyue Zhang}
\address{School of Mathematics, Yangzhou University, Yangzhou 225002, China}
\email {cyzhang$\_$yzu@163.com}

\author[G. G. Zhang]{Gege Zhang}
\address{School of Mathematics, Yangzhou University, Yangzhou 225002, China}
\email {ggzhang$\_$yzu@163.com}

\thanks{The research of W. Chen is supported by the National Natural Science Foundation of China(12671163, 12271469).}

\begin{document}
\begin{abstract}Lacey used sparse domination to study the sharp weighted norm estimate of the maximal function of predictable multipliers in discrete time filtration spaces. Domelevo, Petermichl, and \v{S}kreb developed the self similarity argument known as sparse domination in an abstract martingale setting with a continuous time parameter. In our investigation, we establish sparse domination for discrete-time martingale transforms, introducing the novel concept of conditional sparsity as a core property of our approach. 
The conditional sparsity framework enables derivation of sharp weighted estimates and a mixed-norm estimate \( A_p^\alpha A_r^\beta \) that improves upon known sharp \( L^p \) bounds. Moreover, we develop dedicated sparse domination specifically for Doob's maximal operator, recovering the sharp bound as a direct application. 
Finally, we focus on the application of sparse theory to quantitative two-weight estimates.
\end{abstract}

	\maketitle
\tableofcontents

\section{Introduction and Main Results}
This paper studies sparse domination in martingale spaces and its applications. Martingale spaces and harmonic analysis are connected through dyadic analysis. Dyadic analysis has been widely used in harmonic analysis. Over the past thirty years, dyadic analysis has played a significant role in harmonic analysis and has given birth to the theory of sparse domination. This is fully reflected in the following breakthrough developments.

The Walsh model serves as the dyadic analogue of Fourier analysis. Thiele \cite{MR2692998} studied the boundedness of the bilinear Hilbert transform on the Walsh model, and \cite{MR1491450} proved the boundedness of this transform. Petermichl represented the Hilbert transform as an average of dyadic Haar shifts over random dyadic grids \cite{MR1756958}, represented the Riesz transforms as averages of corresponding dyadic operators \cite{MR2367098}, and constructed Bellman functions to prove the $A_2$ conjecture for these dyadic operators, thereby establishing the $A_2$ conjecture for the Hilbert transform and the Riesz transforms \cite{MR2354322, MR2367098}. 
The dyadic martingale transform is the dyadic analogue of Calder\'on-Zygmund singular integral operators. Wittwer \cite{MR1748283} constructed a Bellman function to prove the $A_2$ conjecture for the dyadic martingale transform. Building upon this foundation and utilizing extrapolation theory, Petermichl and Volberg \cite{MR1894362} proved that the Ahlfors-Beurling transform depends linearly on the $A_p$ weight constant ($p \geq 2$), thereby solving the regularity problem for solutions to the Beltrami equation. Beznosova \cite{MR2433959} proved the linear dependence on the $A_2$ weight constant for the dyadic paraproduct operator. In 2012, Hyt\"onen \cite{MR2912709} represented general Calder\'on-Zygmund singular integral operator $T$ in terms of general dyadic shift operators, completely proving the $A_2$ conjecture. Then Lerner \cite{MR3127380,MR3085756} provided a simplification of the proof of the $A_2$ conjecture using norm sparse domination. This proof relied on local median oscillation estimates in \cite{MR2721744}.  

The essential characteristic of a sparse family $\mathcal{S}$ is that although the sets in $\mathcal{S}$ may overlap, $\mathcal{S}$ possesses a family of pairwise disjoint core subsets $\{E_Q\}_{Q \in \mathcal{S}}$. Since then, sparse domination has attracted significant attention, largely owing to Lerner's seminal work \cite{MR3127380,MR3085756} on norm sparse domination.
When the kernel of a Calder\'on-Zygmund singular integral operator $T$ satisfies a log-Dini condition, \cite{MR3521084} and \cite{MR4007575} independently improved norm sparse domination to pointwise sparse domination  using local median oscillation estimates. In 2017, Lacey \cite{MR3625108} utilized a stopping-time argument in place of local median oscillation estimates to inductively prove the existence of pointwise sparse domination; in this work, the log-Dini condition on the kernel was weakened to a Dini condition. Then research on numerous operators has significantly advanced through the application of sparse domination techniques.

Weighted estimates for multilinear maximal operators were studied in \cite{MR3232584} using sparse domination, establishing their optimality. Assuming weights satisfy the reverse H\"older's inequality, the first author and Damian provided a multilinear version of Sawyer's conclusion in \cite{MR3118310} through sparse estimates of multilinear maximal operators. For Bochner-Riesz operators, bilinear sparse domination was established in \cite{MR3653057}, leading to new weighted estimates and vector-valued inequalities. Sparse domination for variational Carleson operators was achieved in \cite{MR3829751}, involving \( L_p \) averages and directly implying \( L_p \) boundedness along with corresponding weighted and vector-valued conclusions. A new maximal operator was introduced in \cite{MR4018107}, where weak-type inequalities for this operator enabled bilinear sparse domination for rough homogeneous singular integral operators. Sparse estimates for spherical maximal operators were obtained in \cite{MR4041115} using the \( L_p \) improving property of local spherical maximal operators, yielding weighted estimates involving \( A_p \) weights and the reverse H\"older's inequality. Most recently in 2024, paired sparse operators were utilized in \cite{MR4721778} to achieve pointwise dominations of multilinear operators and their commutators. For more information, see \cite{MR4772265}. 
 
Sparse domination essentially consists of two steps: first, dominating the target object using a sparse form; second, estimating the sparse form via relevant norms. This methodology advances developments in martingale theory.  

In discrete martingale spaces, Tanaka and Terasawa \cite{MR3004953} pioneered sparse domination for the Doob maximal operator and established quantitative norm estimates. Notably, in martingale spaces, the dyadic system characteristics become irrelevant. Their approach constructed a martingale analogue of sparse families---termed principal sets\footnote{ For more details on the principal cube construction, see \cite[p.131]{MR2657437}.}. Subsequently, \cite{MR4125846} demonstrated that these principal sets satisfy conditional sparsity properties (defined via conditional expectations), thereby deriving mixed weighted norm inequalities. For the multilinear case, by constructing a family of principal sets for the multilinear Doob's maximal operator, \cite{MR4244905} obtained results related to \( A_p \) weights and \( S_p \) weights. 

In the continuous index case, consider a filtered probability space $(\Omega, \mathcal{F}, \mu, \mathfrak{F})$, where  
\(\mathfrak{F} = (\mathcal{F}_t)_{t \geq 0}\)
is a right-continuous filtration with \(\mathcal{F}_0\) containing all \(\mu\)-null sets in \(\mathcal{F}\).  Let \( X \) and \( Y \) be uniformly integrable c\`{a}dl\`{a}g martingales. Domelevo and Petermichl \cite{MR3916937} proved that the differentially subordinate martingale (\cite{MR1370109,MR1334160}) linearly depends on the weight constant:  
\[
\| Y \|_{L^2(w)} \leq C[w]_{A_2}\| X \|_{L^2(w)}.
\]  
This proof method (via Bellman function) is ineffective for the maximal operator \( Y^* \) of the differentially subordinate martingale \( Y \). 
Assuming continuous paths, Ba\~nuelos, Brzozowski, and Os\c{e}kowski \cite{MR4143411} solved the corresponding maximal operator problem. In 2025, sparse domination for differentially subordinate martingales began to emerge (\cite{MR4926944}).
When \( Y \) is differentially subordinate to \( X \) , Domelevo, Petermichl, and \v{S}kreb \cite{MR4926944} provide a sparse domination and weighted estimate for
 the maximal function \( Y^* \) of $Y$ in greatest generality. This sparse domination is based on stochastic processes and stopping times, applies to c\`{a}dl\`{a}g martingales, and the provided estimates are dimension-free.  

Currently, sparse domination has greatly promoted the systematic study of various operators in harmonic analysis. However, research on sparse domination in martingale spaces remains comparatively underdeveloped. This paper investigates sparse domination in discrete martingale spaces and its applications.

Theorem \ref{thm:sparse} provides sparse domination for uniformly bounded martingale transforms on filtered probability spaces 
$(\Omega, \mathcal{F}, \mu, \{\mathcal{F}_n\}_{n \ge 0})$, extending Lacey's framework \cite{MR3625108} beyond atomic settings. 
In atomic filtrations--where $\sigma$-algebras $\mathcal{F}_n$ are generated by countable partitions and 
conditional expectations $\mathbb{E}[\cdot|\mathcal{F}_n]$ reduce to countably-valued functions---sparse 
collections consist of atoms satisfying measure conditions. For general martingale spaces with non-atomic 
elementary blocks, sparsity characterization requires conditional expectations, leading to our innovation: Conditional Sparsity. All  unexplained notations in martingale spaces can be found in Section \ref{sec:mar_theory}.

\begin{theorem}\label{thm:sparse}Let \( f \in L^1(\mu) \) and let \( Tf \) be its martingale transform corresponding to a multiplier sequence \( v = (v_n)_{n \geq 0} \) with \( \sup_n\|v_n\|_{\infty} \leq 1 \). There exists a constant \( C > 0 \) such that, for every such \( f \) and every such transform \( T \), there exists a sparse operator \( S = S_{T,f} \) satisfying
\begin{equation}\label{eq:main}
 |Tf| \leq C \cdot S|f|.
\end{equation}
The same inequality holds for \( M(Tf) \).
\end{theorem}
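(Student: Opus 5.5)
We follow Lacey's recursive stopping-time scheme \cite{MR3625108}, but replace sets by stopping times and measures of cubes by conditional expectations. The sparse operator will have the form
\[
S|f| = \sum_{k\ge 0} \mathbb{E}\bigl(|f|\,\big|\,\mathcal{F}_{\tau_k}\bigr)\,\mathbbm{1}_{\{\tau_k<\infty\}},
\]
where $\tau_0=0\le \tau_1\le \tau_2\le\cdots$ are stopping times. The family is conditionally sparse in the sense
\[
\mathbb{E}\bigl(\mathbbm{1}_{\{\tau_{k+1}<\infty\}}\,\big|\,\mathcal{F}_{\tau_k}\bigr)\le \tfrac12\,\mathbbm{1}_{\{\tau_k<\infty\}}.
\]
Equivalently, the sets $E_k=\{\tau_k<\infty\}\setminus\{\tau_{k+1}<\infty\}$ are disjoint and satisfy $\mathbb{E}(\mathbbm{1}_{E_k}|\mathcal{F}_{\tau_k})\ge \frac12 \mathbbm{1}_{\{\tau_k<\infty\}}$.

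\textbf{Construction of the stopping times.} Given $\tau_k$, set
\[
\tau_{k+1}=\inf\Bigl\{n>\tau_k:\ \mathbb{E}(|f||\mathcal{F}_n)>K\,\mathbb{E}(|f||\mathcal{F}_{\tau_k})\ \text{ or }\ \Bigl|\sum_{\tau_k<j\le n} v_{j-1}d_jf\Bigr|>K\,\mathbb{E}(|f||\mathcal{F}_{\tau_k})\Bigr\}.
\]
Here $d_jf$ are the martingale differences, and the index convention for the multiplier is taken from Section \ref{sec:mar_theory}. The conditional weak-type $(1,1)$ bounds give
\[
\mathbb{E}\bigl(\mathbbm{1}_{\{\tau_{k+1}<\infty\}}\,\big|\,\mathcal{F}_{\tau_k}\bigr)\le \frac{C}{K}\,\mathbbm{1}_{\{\tau_k<\infty\}}.
\]
Two results are used for this: Doob's inequality for the maximal function, and Burkholder's weak-type estimate for martingale transforms with $|v|\le1$. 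Both are applied to the martingale started at $\tau_k$, i.e.\ to $(\mathbb{E}(f|\mathcal{F}_{n\vee\tau_k})-\mathbb{E}(f|\mathcal{F}_{\tau_k}))_n$, conditionally on $\mathcal{F}_{\tau_k}$. Choosing $K=2C$ gives the factor $\frac12$.

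\textbf{Pointwise bound.} Telescope
\[
Tf=\sum_k \sum_{\tau_k<j\le\tau_{k+1}} v_{j-1}d_jf,
\]
with $Tf$ normalized so the $d_0$ term is bounded by $\mathbb{E}(|f||\mathcal{F}_0)$. In block $k$, the sum up to $\tau_{k+1}-1$ is bounded by $K\,\mathbb{E}(|f||\mathcal{F}_{\tau_k})$ by the definition of the stopping time. The single jump term $v_{\tau_{k+1}-1}d_{\tau_{k+1}}f$ is the main obstacle: in non-atomic or non-regular filtrations $d_{\tau_{k+1}}f$ need not be controlled by $\mathbb{E}(|f||\mathcal{F}_{\tau_k})$. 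We handle it by writing
\[
|d_{\tau_{k+1}}f|\le \mathbb{E}(|f||\mathcal{F}_{\tau_{k+1}})+\mathbb{E}(|f||\mathcal{F}_{\tau_{k+1}-1}).
\]
The second term is at most $K\,\mathbb{E}(|f||\mathcal{F}_{\tau_k})$, since the stopping condition failed at time $\tau_{k+1}-1$. The first term is exactly the next summand of $S|f|$ on $\{\tau_{k+1}<\infty\}$. Summing over $k$ therefore gives $|Tf|\le (2K+1)S|f|$. On $\{\tau_k<\infty=\tau_{k+1}\}$ the tail sums converge a.e.\ by martingale convergence for $f\in L^1$, and they are bounded by the same quantity. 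The delicate points are measurability (the $\tau_{k+1}-1$ trick) and making sure $\mathbb{E}(\cdot|\mathcal{F}_{\tau_k})$ is well defined on $\{\tau_k=\infty\}$; we treat $\mathcal{F}_\infty$ separately.

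\textbf{The maximal function $M(Tf)$.} The same construction works because the stopping rule already controls the running maximum of the partial transforms within each block. For $n$ in block $k$,
\[
|T_nf|\le \sum_{i<k}|\text{block } i| + K\,\mathbb{E}(|f||\mathcal{F}_{\tau_k}),
\]
so $\sup_n|T_nf|\le (2K+1)S|f|$. If $M(Tf)$ denotes Doob's maximal function of $Tf$, we instead add the condition $\mathbb{E}(|Tf||\mathcal{F}_n)$-type stopping. Alternatively, we use that $\mathbb{E}(S|f|\,|\mathcal{F}_n)$ is dominated by a sparse operator of the same form, since the terms $\mathbb{E}(|f||\mathcal{F}_{\tau_k})\mathbbm{1}_{\{\tau_k\le n\}}$ are $\mathcal{F}_n$-measurable, and the remaining terms are controlled via conditional sparsity by $2\,\mathbb{E}(|f||\mathcal{F}_n)$, which is a sparse-type term once $n$ is incorporated through the stopping construction.
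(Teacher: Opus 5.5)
Your proposal is correct and follows essentially the paper's route. You stop when the running size of $f$ (you use $\mathbb{E}(|f|\mid\mathcal{F}_n)$, the paper uses $|f_n|$) or the block transform exceeds a fixed multiple of $\mathbb{E}(|f|\mid\mathcal{F}_{\tau_k})$, and you get conditional sparsity from the Doob and Burkholder weak-type $(1,1)$ bounds after normalizing by that $\mathcal{F}_{\tau_k}$-measurable average. You then absorb the jump via $|d_{\tau_{k+1}}f|\le|f_{\tau_{k+1}}|+|f_{\tau_{k+1}-1}|$. Your cumulative stopping times, with the filtration $(\mathcal{F}_{n\vee\tau_k})_n$, are only a reformulation of the paper's locally filter-shifted ones. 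You use $\mu(\tau_k<\infty)\le 2^{-k}$ implicitly, so that a.e.\ point lies in a last block; state it, since it follows at once from your sparsity bound.

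Since $M(Tf)$ in the paper means $\sup_n|(Tf)_n|$, your block-wise argument already covers it, and you should drop the closing ``alternative''. The remaining terms of $\mathbb{E}(S|f|\mid\mathcal{F}_n)$ equal $\mathbb{E}\bigl(|f|\,\#\{k: n<\tau_k<\infty\}\mid\mathcal{F}_n\bigr)$, which in general is not bounded by $2\,\mathbb{E}(|f|\mid\mathcal{F}_n)$.
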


The application of conditional sparsity allows us to derive Theorem \ref{Theorem_SXC}. Combined with the pointwise domination in Theorem \ref{thm:sparse}, this yields Theorem \ref{thm:mar_trans}, which establishes the sharp weighted bound for uniformly bounded martingale transform---a result first obtained by Thiele, Treil, and Volberg \cite{MR3406523} in the context of atomic martingale spaces.

\begin{theorem}\label{Theorem_SXC}For $1<p<+\infty,$ we have 
\begin{equation} \label{thm:sharp}
\| S|f| \|_{L^p (w)} \lesssim
  [w]_{A_p} ^{\max \{1, \frac1{p - 1}\}} \| f \|_{L^p (w)} .
  \end{equation}
\end{theorem}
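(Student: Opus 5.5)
I will prove \eqref{thm:sharp} by duality and the conditional sparsity of the family underlying $S$. I take the sparse operator to have the form
\[
S|f| = \sum_{k}\sum_{j} \mathbb{E}\big(|f|\,\big|\,\mathcal{F}_{\tau_{k}}\big)\,\mathbbm{1}_{E_{k,j}},
\]
a sum over stopping times (or principal sets) $E$, each weighted by the conditional average of $|f|$. Conditional sparsity is assumed to give, for each such set $E$ at level $n$, a disjoint core $F_E\subset E$ with $\mathbb{E}(\mathbbm{1}_{F_E}\,|\,\mathcal{F}_n)\ge \eta\,\mathbb{E}(\mathbbm{1}_E\,|\,\mathcal{F}_n)$ on $E$.

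\textbf{Step 1: reduce to $p=2$.} Set $\sigma=w^{1-p'}$ and write $f=g\sigma$. It then suffices to bound the bilinear form
\[
\int S(g\sigma)\, h w\,d\mu \le C\,[w]_{A_p}^{\max\{1,\frac1{p-1}\}}\,\|g\|_{L^p(\sigma)}\|h\|_{L^{p'}(w)} .
\]
The form is symmetric in $(w,\sigma)$: each term $\mathbb{E}(g\sigma|\mathcal{F}_n)\mathbb{E}(hw\mathbbm{1}_E|\mathcal{F}_n)$ is symmetric after conditioning, because the sparse operator is self-adjoint up to the conditional structure. Also $[\sigma]_{A_{p'}}=[w]_{A_p}^{p'-1}$. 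So it suffices to treat $p\ge2$ and obtain the exponent $1$; the range $p<2$ then gives $[w]_{A_p}^{1/(p-1)}$. Alternatively, I can prove the $p=2$ linear bound and apply the sharp martingale extrapolation theorem from Section \ref{sec:mar_theory}, if one is available there.

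\textbf{Step 2: the main estimate.} On $E$ at level $n$, I rewrite each term as
\[
\mathbb{E}(g\sigma|\mathcal{F}_n)\,\mathbb{E}(hw\mathbbm{1}_E|\mathcal{F}_n)
= \frac{\mathbb{E}(w|\mathcal{F}_n)\,\mathbb{E}(\sigma|\mathcal{F}_n)^{p-1}}{1}\cdot \mathbb{E}(\sigma|\mathcal{F}_n)^{2-p}\,\langle g\rangle^\sigma_n\,\langle h\rangle^w_n\,\mathbb{E}(\mathbbm{1}_E|\mathcal{F}_n).
\]
Here $\langle g\rangle^\sigma_n=\mathbb{E}(g\sigma|\mathcal{F}_n)/\mathbb{E}(\sigma|\mathcal{F}_n)$ is the weighted conditional average. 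The first factor is at most $[w]_{A_p}$. Following the Moen/Lerner argument, for $p=2$ I bound $\mathbb{E}(\mathbbm{1}_E|\mathcal{F}_n)\le\eta^{-1}\mathbb{E}(\mathbbm{1}_{F_E}|\mathcal{F}_n)$. I then use H\"older for conditional expectation in the form $\mathbb{E}(\mathbbm{1}_{F})\le \mathbb{E}(w^{1/2}\sigma^{1/2}\mathbbm{1}_F)$, noting that $w\sigma=1$ when $p=2$. This dominates each term by $\eta^{-1}[w]_{A_2}\,\mathbb{E}\big((M^\sigma g)(M^w h)w^{1/2}\sigma^{1/2}\mathbbm{1}_{F_E}\big)$, where $M^\sigma$ is the weighted Doob maximal operator. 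Summing over disjoint $F_E$ and applying Cauchy--Schwarz gives $\|M^\sigma g\|_{L^2(\sigma)}\|M^w h\|_{L^2(w)}$. Doob's inequality with respect to the measure $\sigma\,d\mu$ bounds this by a constant independent of the weight.

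For general $p$ the same scheme works. I use $\mathbb{E}(\mathbbm{1}_F|\mathcal{F}_n)\le \mathbb{E}(w\mathbbm{1}_F|\mathcal{F}_n)^{1/p}\mathbb{E}(\sigma\mathbbm{1}_F|\mathcal{F}_n)^{1/p'}$ and Hölder in $\ell^p$ to conclude.

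\textbf{Main obstacle.} The expected difficulty is weighted Doob's inequality with constants independent of $w$. The weighted conditional expectation $\mathbb{E}^\sigma(\cdot|\mathcal{F}_n)=\mathbb{E}(\cdot\,\sigma|\mathcal{F}_n)/\mathbb{E}(\sigma|\mathcal{F}_n)$ is again a martingale with respect to the probability $\sigma\,d\mu/\|\sigma\|_1$, so Doob's $L^p$ inequality applies with constant $p'$. The second difficulty is the passage from $\mathcal{F}_n$-conditional averages to integrals over $F_E$ in the non-atomic setting. This requires conditioning inside the integral, via $\int \mathbb{E}(X|\mathcal{F}_n)\mathbbm{1}_E=\int X\,\mathbb{E}(\mathbbm{1}_E|\mathcal{F}_n)$ for $E\in$ a later $\sigma$-algebra, and it relies on the conditional sparsity inequality in place of measure ratios.
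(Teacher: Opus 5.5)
Your plan has the same architecture as the paper's proof: duality against $hw$, rewriting each term as $|f|_{\tau^{(m)},\sigma}\,h_{\tau^{(m)},w}\,\sigma_{\tau^{(m)}}w_{\tau^{(m)}}\mathbf{1}_{E_m}$, extracting $w_{\tau^{(m)}}\sigma_{\tau^{(m)}}^{p-1}\le[w]_{A_p}$, passing to the disjoint sets $S_m$ via conditional sparsity and conditional H\"older, then H\"older and weighted Doob, with $p<2$ handled by the $(w,p)\leftrightarrow(\sigma,p')$ symmetry. Your $p=2$ argument is correct. (Minor: your displayed identity needs $E$ to be measurable for the conditioning $\sigma$-algebra, as $E_m=\{\tau^{(m)}<\infty\}\in\mathcal{F}_{\tau^{(m)}}$ is; otherwise $\mathbb{E}(hw\mathbf{1}_E|\mathcal{F}_n)$ does not factor.) The gap is the case $p>2$, which is where the linear bound is actually earned. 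You never say what happens to the factor $\sigma_n^{2-p}$. The inequality you quote, $(\mathbf{1}_F)_n\le (w\mathbf{1}_F)_n^{1/p}(\sigma\mathbf{1}_F)_n^{1/p'}$, leaves the weight factor $\sigma_n^{2-p}(w\mathbf{1}_F)_n^{1/p}(\sigma\mathbf{1}_F)_n^{1/p'}$. But H\"older pairing $\langle g\rangle^\sigma_n$ in $L^p(\sigma)$ with $\langle h\rangle^w_n$ in $L^{p'}(w)$ needs exactly $(\sigma\mathbf{1}_F)_n^{1/p}(w\mathbf{1}_F)_n^{1/p'}$. The exponent on $w\mathbf{1}_F$ is $1/p$ instead of $1/p'$, and $\sigma_n^{2-p}$ is uncontrolled, so ``H\"older in $\ell^p$ to conclude'' does not go through as stated.

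The missing idea is the paper's, in three moves. First, use conditional sparsity to the power $p-1$: $\mathbf{1}_{E_m}\le 2^{p-1}(\mathbf{1}_{S_m})_{\tau^{(m)}}^{p-1}\mathbf{1}_{E_m}$. Second, conditional H\"older gives $(\mathbf{1}_{S_m})_{\tau^{(m)}}^{p-1}\le (w\mathbf{1}_{S_m})_{\tau^{(m)}}^{1/p'}(\sigma\mathbf{1}_{S_m})_{\tau^{(m)}}^{(p-1)/p'}$. Third, $2-p\le 0$ together with $(\sigma\mathbf{1}_{S_m})_{\tau^{(m)}}\le\sigma_{\tau^{(m)}}$ gives $\sigma_{\tau^{(m)}}^{2-p}\le(\sigma\mathbf{1}_{S_m})_{\tau^{(m)}}^{2-p}$. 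Since $(p-1)/p'+2-p=1/p$, the factors combine to exactly $(w\mathbf{1}_{S_m})_{\tau^{(m)}}^{1/p'}(\sigma\mathbf{1}_{S_m})_{\tau^{(m)}}^{1/p}$, and this is precisely where $p\ge 2$ is used. Equivalently, you could keep your first-power estimate. After $\sigma_n^{2-p}\le(\sigma\mathbf{1}_F)_n^{2-p}$, the ratio of your factor to the target is at most $\big[(w\mathbf{1}_F)_n^{1/p}(\sigma\mathbf{1}_F)_n^{1/p'}\big]^{2-p}$, which is $\le\eta^{2-p}$ because the bracket is $\ge(\mathbf{1}_F)_n\ge\eta$ on $E$. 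Either way, some such step is indispensable. Finally, your fallback through a sharp martingale extrapolation theorem is not available: the paper proves none, and establishing one for general filtrations would be a separate task.
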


\begin{theorem}\label{thm:mar_trans}Let \( f \in L^1(\mu) \) and let \( Tf \) be its martingale transform corresponding to a multiplier sequence \( v = (v_n)_{n \geq 0} \) with \( \sup_n\|v_n\|_{\infty} \leq 1 \). Then we have
\begin{equation}\label{eq:mar_trans}
\|  Tf\|_{L^p (w)} \lesssim
  [w]_{A_p} ^{\max \{1, \frac1{p - 1}\}} \| f \|_{L^p (w)} .
\end{equation}
The same inequality holds for \( M(Tf) \).
\end{theorem}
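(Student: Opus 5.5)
Theorem \ref{thm:mar_trans} follows by combining the pointwise domination of Theorem \ref{thm:sparse} with the weighted bound for sparse operators in Theorem \ref{Theorem_SXC}. I would also include a density/approximation step so that every object involved is finite and well defined.

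First, fix $f\in L^1(\mu)\cap L^p(w)$; if $\|f\|_{L^p(w)}=\infty$ there is nothing to prove. For the multiplier sequence $v$ with $\sup_n\|v_n\|_\infty\le 1$, Theorem \ref{thm:sparse} provides a sparse operator $S=S_{T,f}$ with $|Tf|\le C\,S|f|$ $\mu$-a.e. The constant $C$ is independent of $f$, $T$ and $w$. Since $w\ge 0$, raising to the $p$-th power and integrating against $w\,d\mu$ gives
\[
\|Tf\|_{L^p(w)}\le C\,\|S|f|\|_{L^p(w)}.
\]
The point to check here is that Theorem \ref{Theorem_SXC} is uniform over all sparse operators, in particular over the one produced for this $f$. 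That is, its implicit constant depends only on $p$ and on the sparsity parameter of the conditional sparse family, not on the family itself. I would verify that the sparse families from Theorem \ref{thm:sparse} carry a fixed sparsity constant, say $\tfrac12$, so the implied constant in \eqref{thm:sharp} is uniform. Applying \eqref{thm:sharp} then yields
\[
\|Tf\|_{L^p(w)}\lesssim [w]_{A_p}^{\max\{1,\frac1{p-1}\}}\|f\|_{L^p(w)}.
\]

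For $M(Tf)$ the argument is identical, because Theorem \ref{thm:sparse} also asserts $M(Tf)\le C\,S|f|$ for a (possibly different) sparse operator with the same uniform constants.

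The main obstacle is the a priori hypothesis $f\in L^1(\mu)$. For the general statement on $L^p(w)$, I would argue as follows. Since $w\in A_p$ forces $\sigma=w^{1-p'}$ to be integrable on a probability space, Hölder's inequality gives $L^p(w)\subset L^1(\mu)$:
\[
\int|f|\,d\mu\le\|f\|_{L^p(w)}\Bigl(\int\sigma\,d\mu\Bigr)^{1/p'}<\infty.
\]
So the hypothesis is automatic, and no truncation argument is needed. If one only wanted to assume $\mathbb E w<\infty$, I would instead use truncations $f_N=f\mathbf 1_{\{|f|\le N\}}$ together with the $L^1$-continuity of $T$ in measure. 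Then I would pass to the limit via Fatou's lemma, using convergence in measure of $Tf_N\to Tf$ along a subsequence.
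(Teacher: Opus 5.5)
Your proposal is correct and follows the paper's route: Theorem~\ref{thm:mar_trans} is obtained by combining the pointwise bounds $|Tf|\le C\,S|f|$ and $M(Tf)\le C\,S|f|$ from Theorem~\ref{thm:sparse} with Theorem~\ref{Theorem_SXC}, whose implied constant depends only on $p$ and the fixed conditional-sparsity constant $2$. Your extra remarks are correct and make explicit what the paper leaves implicit: the uniformity of that constant over all sparse families, and the inclusion $L^p(w)\subset L^1(\mu)$ via H\"older's inequality, which holds because $\sigma\in L^1$.
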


As shown above, we directly employ sparse operators $S$ to derive sharp $A_p$ estimates for uniformly bounded martingale transforms. To push these estimates further, we introduce auxiliary sparse operators $S_{ \nu} f$, whose deployment within a framework of mixed weighted estimates leads to an even sharper bound. Let \( \nu \geq 1 .\)  The auxiliary operator $S_{ \nu}$ defined by
\[
S_{ \nu} f = \left( \sum_j \mathbb{E}\left( |f|\mid \mathcal{F}^{(i)}_{\tau^{(i)}}\right)^{ \nu} \mathbf{1}_{\{\tau^{(i)}<\infty\}} \right)^{1/ \nu},
\]
extends the sparse operator \( Sf \) (Definition \ref{def:sparse}). 
We establish a mixed-norm estimate of type $A_p^{\alpha}A_r^{\beta}$  in Theorem \ref{Theorem_SM}. Following Lerner's approach in \cite{MR3078357},  we derive Theorem \ref{Theorem_SM} via  standard duality arguments.

\begin{theorem}\label{Theorem_SM}  For $\nu+1\leq p\leq r<\infty,$ we have \[ \| S _{\nu}|f |\|_{L^p (w)} \lesssim
    [w]_{(A_p)^{\frac{1}{p-1}} (A_r)^{\frac{1}{\nu}-\frac{1}{p-1}}}  \| f \|_{L^p (w)} .\]
\end{theorem}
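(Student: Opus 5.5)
We follow Lerner's duality scheme from \cite{MR3078357}, adapted to conditional sparsity. Put $\sigma=w^{1-p'}$ and write $f=g\sigma$, so the claim becomes $\|S_\nu(g\sigma)\|_{L^p(w)}\lesssim [w]\,\|g\|_{L^p(\sigma)}$. Since $p/\nu\ge (\nu+1)/\nu>1$, we raise to the $\nu$-th power and dualize in $L^{p/\nu}(w)$. This reduces the problem to bounding
\[
\sum_{i}\int_{\{\tau^{(i)}<\infty\}} \mathbb{E}\bigl(|g|\sigma\mid\mathcal{F}^{(i)}_{\tau^{(i)}}\bigr)^{\nu}\, h\,w\,d\mu
\]
over $h\ge0$ with $\|h\|_{L^{(p/\nu)'}(w)}=1$, and showing it is $\lesssim [w]^{\nu}\|g\|^{\nu}_{L^p(\sigma)}$.

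Conditioning on $\mathcal{F}^{(i)}_{\tau^{(i)}}$ turns each summand into $\int \mathbb{E}(g\sigma|\cdot)^{\nu}\mathbb{E}(hw|\cdot)\mathbf 1_{\{\tau^{(i)}<\infty\}}\,d\mu$. We then write
\[
\mathbb{E}(g\sigma)^{\nu}\,\mathbb{E}(hw)=\mathbb{E}(\sigma)^{\nu}\,\mathbb{E}(w)\cdot\Bigl(\tfrac{\mathbb{E}(g\sigma)}{\mathbb{E}\sigma}\Bigr)^{\nu}\tfrac{\mathbb{E}(hw)}{\mathbb{E}w}.
\]
The weight factor $\mathbb{E}(\sigma)^{\nu}\mathbb{E}(w)$ is split as $\bigl(\mathbb{E}w\,\mathbb{E}(\sigma)^{p-1}\bigr)^{a}\bigl(\mathbb{E}w\,\mathbb{E}(w^{1-r'})^{r-1}\bigr)^{b}$ times a leftover. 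The leftover is handled by Jensen: $\mathbb{E}(\sigma)^{(r-1)/(p-1)}\le \mathbb{E}(w^{1-r'})^{r-1}$ fails in the wrong direction, so we instead use $\mathbb{E}(w^{1-r'})^{-(r-1)}\le \mathbb{E}(w)$ and $\mathbb{E}(\sigma)^{-(p-1)}\le\mathbb{E} w$ appropriately. The exponents are $a=\nu/(p-1)$ and $b=1-\nu/(p-1)\ge0$, where $b\ge0$ uses $p\ge\nu+1$, and $r\ge p$ keeps the Jensen directions correct. Thus each summand is bounded by $[w]^{\nu}$ times a product of the form $\mathbb{E}(\sigma)^{\nu-\theta}\mathbb{E}(w)^{1-\kappa}$, where the remaining powers are arranged so that they become the measures of the disjoint pieces in the next step.

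Next we use conditional sparsity: $\mathbf 1_{\{\tau^{(i)}<\infty\}}\le 2\,\mathbb{E}(\mathbf 1_{E_i}\mid\mathcal{F}^{(i)}_{\tau^{(i)}})$, with the $E_i$ pairwise disjoint. Combined with Hölder on $E_i$, this replaces the leftover powers by $\sigma(E_i)^{\nu/p}w(E_i)^{1-\nu/p}$-type integrals. Applying Hölder in $i$ with exponents $p/\nu$ and $(p/\nu)'$ then bounds the sum by
\[
\Bigl(\sum_i\int_{E_i}(M^{\sigma}g)^p\sigma\Bigr)^{\nu/p}\Bigl(\sum_i\int_{E_i}(M^{w}h)^{(p/\nu)'}w\Bigr)^{1/(p/\nu)'}.
\]
Here $M^{\sigma}$ and $M^{w}$ are the weighted Doob maximal operators $\sup_n \mathbb{E}(g\sigma|\mathcal F_n)/\mathbb{E}(\sigma|\mathcal F_n)$, and the stopped averages are dominated by them. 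Disjointness of the $E_i$ together with Doob's inequality in $L^p(\sigma)$ and in $L^{(p/\nu)'}(w)$ closes the estimate with constants independent of the weights.

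The main obstacle is the transfer step: passing from conditional sparsity (a conditional expectation of $\mathbf 1_{E_i}$) to the weighted quantities $\sigma(E_i)$ and $w(E_i)$. This requires a conditional Hölder or reverse-Jensen step of the form $\mathbb{E}(\mathbf 1_{E})\le \mathbb{E}(\mathbf 1_E\sigma)^{\alpha}\mathbb{E}(\mathbf 1_E w)^{\beta}\mathbb{E}(\cdot)$. Its exponents must match exactly the leftover powers produced by the $A_p$–$A_r$ splitting. I would first fix these exponents via Hölder with three factors $\sigma^{\alpha}w^{\beta}\cdot w^{-\gamma}$ chosen so that $\alpha p'$ and related products reproduce $A_r$'s $w^{1-r'}$. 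If the bookkeeping fails, I would fall back on a weighted Carleson embedding for principal sets, as in \cite{MR4125846}.
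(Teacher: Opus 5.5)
Your outline matches the paper's proof: duality in $L^{p/\nu}(w)$, passage to $\sigma$- and $w$-weighted conditional averages, extraction of $A_{\tau^{(i)},p}(w)^{\nu/(p-1)}A_{\tau^{(i)},r}(w)^{1-\nu/(p-1)}$, conditional sparsity, H\"older in $i$ with exponents $p/\nu,(p/\nu)'$, and weighted Doob. However, the one step that carries the argument is left open (you call it the main obstacle), and the treatment you propose for it in your second paragraph does not work.

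Use the paper's notation: $E_i=\{\tau^{(i)}<\infty\}$, $S_i=E_i\setminus E_{i+1}$ (your disjoint $E_i$), $g_{\tau^{(i)}}=\mathbb{E}(g\mid\mathcal{F}^{(i)}_{\tau^{(i)}})$, and $b=1-\frac{\nu}{p-1}\ge 0$. Your splitting reads
\[
\sigma_{\tau^{(i)}}^{\nu}w_{\tau^{(i)}}=\bigl(w_{\tau^{(i)}}\sigma_{\tau^{(i)}}^{p-1}\bigr)^{\frac{\nu}{p-1}}\bigl(w_{\tau^{(i)}}(w^{1-r'})_{\tau^{(i)}}^{r-1}\bigr)^{b}(w^{1-r'})_{\tau^{(i)}}^{-(r-1)b}.
\]
Bounding the leftover by Jensen, $(w^{1-r'})^{-(r-1)}_{\tau^{(i)}}\le w_{\tau^{(i)}}$, just returns $w^{b}_{\tau^{(i)}}$ and wastes the $A_r$ factor you extracted. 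From $w^{b}_{\tau^{(i)}}$ there is no weight-free passage to $(\sigma\mathbf 1_{S_i})^{\nu/p}_{\tau^{(i)}}(w\mathbf 1_{S_i})^{1-\nu/p}_{\tau^{(i)}}$. The leftover must instead be bounded \emph{by} the sparse piece, and this is where conditional sparsity enters. On $E_i$ one has $(\mathbf 1_{S_i})_{\tau^{(i)}}\ge\frac12$, so $\mathbf 1_{E_i}\le 2^{q}(\mathbf 1_{S_i})^{q}_{\tau^{(i)}}\mathbf 1_{E_i}$ for every $q\ge 1$. One then applies conditional H\"older to $\mathbf 1_{S_i}$ written as a product of powers of the weights.

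The paper does this in two steps. First, $q=r$ gives $(w^{1-r'})^{-(r-1)}_{\tau^{(i)}}\mathbf 1_{E_i}\lesssim (w\mathbf 1_{S_i})_{\tau^{(i)}}\mathbf 1_{E_i}$, so the leftover is at most $(w\mathbf 1_{S_i})^{b}_{\tau^{(i)}}$. This is the reverse of your Jensen step. Second, $q=p$ gives $(w\mathbf 1_{S_i})^{-1/(p-1)}_{\tau^{(i)}}\mathbf 1_{E_i}\lesssim(\sigma\mathbf 1_{S_i})_{\tau^{(i)}}\mathbf 1_{E_i}$. The identity $b=-\frac{1}{p-1}\cdot\frac{\nu}{p}+\bigl(1-\frac{\nu}{p}\bigr)$ then turns $(w\mathbf 1_{S_i})^{b}_{\tau^{(i)}}$ into $(\sigma\mathbf 1_{S_i})^{\nu/p}_{\tau^{(i)}}(w\mathbf 1_{S_i})^{1-\nu/p}_{\tau^{(i)}}$.

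Your three-factor H\"older idea also closes the gap in one step. With $\theta=1+(r-1)b$,
\[
\mathbf 1_{E_i}\lesssim(\mathbf 1_{S_i})^{\theta}_{\tau^{(i)}}\mathbf 1_{E_i}\le(\sigma\mathbf 1_{S_i})^{\frac{\nu}{p}}_{\tau^{(i)}}(w\mathbf 1_{S_i})^{1-\frac{\nu}{p}}_{\tau^{(i)}}(w^{1-r'})^{(r-1)b}_{\tau^{(i)}}\mathbf 1_{E_i}.
\]
This holds because the three exponents are nonnegative and sum to $\theta$, and $\sigma^{\nu/p}w^{1-\nu/p}(w^{1-r'})^{(r-1)b}\equiv 1$ (the total $w$-exponent is $-\frac{\nu}{p(p-1)}+1-\frac{\nu}{p}-b=0$).

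Either version, inserted into your splitting, yields $\sigma^{\nu}_{\tau^{(i)}}w_{\tau^{(i)}}\mathbf 1_{E_i}\lesssim[w]^{\nu}(\sigma\mathbf 1_{S_i})^{\nu/p}_{\tau^{(i)}}(w\mathbf 1_{S_i})^{1-\nu/p}_{\tau^{(i)}}\mathbf 1_{E_i}$, where $[w]$ is the mixed constant. After that, the rest of your outline goes through exactly as in the paper. Without this inequality, the proposal is not yet a proof.
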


\begin{corollary}\label{corollary_imp} For $1< p\leq2\leq p^\prime\leq r<\infty,$ we have \[ \| S |f |\|_{L^p (w)} \lesssim
    [\sigma]_{(A_{p^\prime})^{\frac{1}{p^\prime-1}} (A_r)^{1-\frac{1}{p^\prime-1}}}  \| f \|_{L^p (w)} ,\]
    where $\sigma=w^{-\frac{1}{p-1}}.$
\end{corollary}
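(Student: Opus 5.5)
The corollary should follow from Theorem \ref{Theorem_SM} with \(\nu=1\), applied on the dual side with the exponent \(p'\) and the dual weight \(\sigma\). The sparse operator \(S\) is self-adjoint in the following sense: since \(S|f|=\sum_j \mathbb{E}(|f|\mid\mathcal{F}^{(i)}_{\tau^{(i)}})\mathbf{1}_{\{\tau^{(i)}<\infty\}}\) and conditional expectation is self-adjoint,
\[
\int S|f|\,g\,d\mu=\sum_j\int \mathbb{E}(|f|\mid\mathcal{F}^{(i)}_{\tau^{(i)}})\,\mathbb{E}(g\mid\mathcal{F}^{(i)}_{\tau^{(i)}})\mathbf{1}_{\{\tau^{(i)}<\infty\}}\,d\mu=\int |f|\,S g\,d\mu
\]
for nonnegative \(g\). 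Here we use that \(\{\tau^{(i)}<\infty\}\in\mathcal{F}^{(i)}_{\tau^{(i)}}\), so the indicator can be moved inside the conditional expectation.

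By duality, \(\|S|f|\|_{L^p(w)}=\sup\int S|f|\,g\,w\,d\mu\), where the supremum is over \(g\ge0\) with \(\|g\|_{L^{p'}(w)}\le1\). We write \(h=gw\), so that \(\|h\|_{L^{p'}(\sigma)}=\|g\|_{L^{p'}(w)}\), using \(w^{1-p'}=\sigma\). Then
\[
\int S|f|\,h\,d\mu=\int |f|\,Sh\,d\mu=\int |f|w^{1/p}\,(Sh)\,w^{-1/p}\,d\mu\le \|f\|_{L^p(w)}\,\|Sh\|_{L^{p'}(\sigma)},
\]
because \(w^{-p'/p}=\sigma\).

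It therefore suffices to prove
\[
\|Sh\|_{L^{p'}(\sigma)}\lesssim[\sigma]_{(A_{p'})^{\frac1{p'-1}}(A_r)^{1-\frac1{p'-1}}}\|h\|_{L^{p'}(\sigma)}.
\]
This is exactly Theorem \ref{Theorem_SM} with \(\nu=1\) (where \(S_1=S\)), exponent \(p'\) in place of \(p\), and weight \(\sigma\). The hypotheses of that theorem become \(2\le p'\le r<\infty\). Since \(1<p\le 2\) gives \(p'\ge2\), these are precisely the assumptions of the corollary. The exponent \(\tfrac1\nu-\tfrac1{p-1}\) becomes \(1-\tfrac1{p'-1}\), which matches.

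The main point to check is the duality identity for \(S\), in particular that the sparse family built for \(f\) in Theorem \ref{thm:sparse} does not depend on the test function. It does not matter here: \(S\) is a fixed operator attached to a given family of stopping times, and Theorem \ref{Theorem_SM} is uniform over sparse families. The remaining checks are routine: confirm that the implicit constant in Theorem \ref{Theorem_SM} does not depend on the family, and justify the duality for \(f\in L^p(w)\) by monotone convergence on truncations.
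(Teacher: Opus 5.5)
Your argument is correct and is exactly the derivation the paper leaves implicit (it states the corollary without a written proof): apply Theorem \ref{Theorem_SM} with $\nu=1$ to the weight $\sigma$ at exponent $p'\ge 2$, then return to $L^p(w)$ through the self-adjoint pairing $\int S|f|\,h\,d\mu=\int |f|\,Sh\,d\mu$, just as in Case 2 of the proof of Theorem \ref{Theorem_SXC}. The two checks you flag both hold: $\{\tau^{(i)}<\infty\}\in\mathcal{F}^{(i)}_{\tau^{(i)}}$, and the implicit constant in Theorem \ref{Theorem_SM} depends only on the conditional sparsity constant.
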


In the case of $1<p<2,$ the following norm comparison holds: $$ [\sigma]_{(A_{p^\prime})^{\frac{1}{p^\prime-1}} (A_r)^{1-\frac{1}{p^\prime-1}}}\leq[\sigma]_{(A_{p^\prime})^{\frac{1}{p^\prime-1}} (A_{p^\prime})^{1-\frac{1}{p^\prime-1}}}=[\sigma]_{A_{p^\prime}}=[w]^{\frac{1}{p-1}}_{A_{p}}.$$
This demonstrates that Corollary \ref{corollary_imp} provides a sharper bound than Theorem \ref{Theorem_SXC}.
 
Set \( v_n \equiv 1 \) for all \( n \geq 0 \) in Definition \ref{d:transform1}. For a martingale \( f = (f_n)_{n\geq 0} \) with \( f_0 = 0 \), we have \( M(Tf) = Mf \). In view of Theorem \ref{thm:mar_trans}, we obtain
\begin{equation}\label{eq:spec}
\|Mf\|_{L^p(w)} \lesssim [w]_{A_p}^{\max\left\{1,\frac{1}{p-1}\right\}} \|f\|_{L^p(w)}.
\end{equation}
Comparing with the sharp bound for Doob's maximal operator (\cite[Corollary4.5]{MR3004953}):
\begin{equation}\label{eq:gel}
\|Mf\|_{L^p(w)} \lesssim [w]_{A_p}^{\frac{1}{p-1}} \|f\|_{L^p(w)}, 
\end{equation}
we note that when \( p > 2 \), the exponent in \eqref{eq:spec} reduces to \( \frac{1}{p-1} \), matching \eqref{eq:gel}. However, for \( 1 < p < 2 \), 
\[
\max\left\{1, \frac{1}{p-1}\right\} = \frac{1}{p-1} > 1,
\]
while the optimal exponent in \eqref{eq:gel} remains \( \frac{1}{p-1} \). This indicates that for the special case \( v_n \equiv 1 \) (Doob's maximal operator), the sparse domination in Theorem \ref{Theorem_SXC} is not sharp, motivating the need for improved sparse constructions.  
In Theorem \ref{thm:max_sparse}, we establish a sparse domination specifically for Doob's maximal operator. As an application, this allows us to recover the sharp bound \eqref{eq:gel}.

\begin{theorem}\label{thm:max_sparse}
There is a constant \( C > 0 \) so that for all  \( f \in L^1(\mu) \), there is a simple sparse operator \( \mathbb{S} = \mathbb{S}_{M,f} \) such that
\begin{equation}\label{eq:max_main}
Mf \leq C \cdot \mathbb{S}|f|.
\end{equation}
\end{theorem}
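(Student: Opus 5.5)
We construct the sparse family by the principal-set (stopping-time) construction of Tanaka--Terasawa, iterated inductively in the spirit of Lacey's argument. The sparse operator should be a single-scale-average operator $\mathbb{S}|f| = \sum_{i}\sum_j \mathbb{E}(|f|\mid\mathcal{F}^{(i)}_{\tau^{(i)}})\mathbf{1}_{\{\tau^{(i)}<\infty\}}\mathbf{1}_{E^{(i)}}$, where the sets carry disjoint cores. We may assume $f\ge 0$, since $Mf\le M|f|$.

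\textbf{Construction.} Start with $\tau^{(0)}=0$ and $f_0=\mathbb{E}(f\mid\mathcal{F}_0)$. Given a stopping time $\tau^{(i)}$, define the next generation by
\[
\tau^{(i+1)}=\inf\{n>\tau^{(i)}:\ \mathbb{E}(f\mid\mathcal{F}_n)>2\,\mathbb{E}(f\mid\mathcal{F}_{\tau^{(i)}})\}.
\]
This is a stopping time, and $\tau^{(i)}<\tau^{(i+1)}$ on $\{\tau^{(i)}<\infty\}$.

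\textbf{Conditional sparsity.} By the conditional Doob weak-type inequality on $\{\tau^{(i)}<\infty\}$,
\[
\mu(\tau^{(i+1)}<\infty\mid\mathcal{F}_{\tau^{(i)}})\le \tfrac12 .
\]
So the sets $E^{(i)}=\{\tau^{(i)}<\infty=\tau^{(i+1)}\}$, or more precisely the sets $\{\tau^{(i)}<\infty\}\setminus\{\tau^{(i+1)}<\infty\}$, are pairwise disjoint. Each satisfies $\mathbb{E}(\mathbf{1}_{E^{(i)}}\mid\mathcal{F}_{\tau^{(i)}})\ge\frac12\mathbf{1}_{\{\tau^{(i)}<\infty\}}$. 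This is exactly the conditional sparsity of Definition~\ref{def:sparse}, and it is what the later weighted estimates need.

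\textbf{Pointwise domination.} Fix $\omega$ and $n$. Let $i$ be the last index with $\tau^{(i)}(\omega)\le n$. Such an $i$ exists because $\tau^{(0)}=0$, and it is finite almost everywhere because $\mu(\tau^{(i)}<\infty)\le 2^{-i}$. By the definition of $\tau^{(i+1)}$, which has not yet occurred at time $n$,
\[
f_n(\omega)\le 2\,\mathbb{E}(f\mid\mathcal{F}_{\tau^{(i)}})(\omega).
\]
Taking the supremum over $n$ gives
\[
Mf\le 2\sup_i \mathbb{E}(f\mid\mathcal{F}_{\tau^{(i)}})\mathbf{1}_{\{\tau^{(i)}<\infty\}}\le 2\sum_i \mathbb{E}(f\mid\mathcal{F}_{\tau^{(i)}})\mathbf{1}_{\{\tau^{(i)}<\infty\}} .
\]
The sum on the right is the sparse operator. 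To get the ``simple'' form, which is needed to recover the sharp exponent $\frac{1}{p-1}$ via the Tanaka--Terasawa argument, we use that a.e. the pointwise supremum equals the value at the last index $i(\omega)$. That gives
\[
Mf\le 2\sum_i \mathbb{E}(f\mid\mathcal{F}_{\tau^{(i)}})\mathbf{1}_{\{\tau^{(i)}<\infty\}}\mathbf{1}_{E^{(i)}}.
\]
This is the simple sparse operator $\mathbb{S}|f|$, with $C=2$.

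\textbf{Main obstacle.} The hard part is making the conditional weak-type estimate and the stopping-time measurability rigorous on non-atomic filtrations. The estimate requires applying Doob's inequality to the process $(\mathbb{E}(f\mid\mathcal{F}_{\tau^{(i)}+k}))_k$ relative to $\mathcal{F}_{\tau^{(i)}}$. We must check that $\mathbb{E}(f\mid\mathcal{F}_{\tau^{(i)}})$ is $\mathcal{F}_{\tau^{(i)}}$-measurable, which lets it serve as a conditional threshold. We also need the identity $\mathbb{E}(\mathbf{1}_{\{\tau^{(i)}=m\}}\,\cdot\mid\mathcal{F}_{\tau^{(i)}})=\mathbf{1}_{\{\tau^{(i)}=m\}}\mathbb{E}(\cdot\mid\mathcal{F}_m)$, so that the estimate decomposes over the level sets $\{\tau^{(i)}=m\}$. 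We also have to handle the sets $\{\tau^{(i)}=\infty\}$ and verify convergence almost everywhere.
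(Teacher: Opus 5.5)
Your proposal is correct and is essentially the paper's argument. The paper likewise stops at the first time $|f_n|$ exceeds $2\,\mathbb{E}(|f|\mid\mathcal{F}_{\tau^{(i)}})$, phrased as a one-step lemma iterated on locally filter-shifted spaces rather than with absolute stopping times; your replacing $f$ by $|f|$ changes nothing. It then obtains conditional sparsity with constant $2$ from Doob's weak-type inequality on $\mathcal{F}_{\tau^{(i)}}$-measurable sets, and the simple form from the observation that on $\{\tau<\infty\}$ one has $Mf=\sup_{n\ge\tau}|f_n|$. The one step you should state explicitly is that $f_{\tau^{(i+1)}}>2f_{\tau^{(i)}}$ on $\{\tau^{(i+1)}<\infty\}$. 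Hence $f_{\tau^{(i)}}(\omega)$ increases along the chain, and its supremum is attained at the last index, i.e., on $S_{i(\omega)}$.
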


\begin{theorem}\label{thm_Ap} Let $v$ be a weight  and $1< p< \infty.$ 
We have the inequality
\begin{equation}\label{norm}
\lVert M  f\rVert _{L ^{p} (w)} \leq C \lVert f\rVert _{L ^{p} (w)}
\end{equation}
if and only if $w\in  A_p$.
Moreover, if we denote the smallest constant in \eqref{norm} by $\|M\|$, we have
\begin{equation}\label{Ap_con}
[w]_{A_p}\leq\|M\|^p
\end{equation}
and
\begin{equation}\label{M_con}
\|M\|\lesssim[w]^{\frac{1}{p-1}}_{A_p}.
\end{equation}
\end{theorem}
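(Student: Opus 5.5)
Theorem \ref{thm_Ap} has three parts: the equivalence, the lower bound \eqref{Ap_con}, and the upper bound \eqref{M_con}. We take the martingale $A_p$ constant to be
\[
[w]_{A_p}=\sup_n\Big\|\mathbb{E}(w\mid\mathcal{F}_n)\,\mathbb{E}(\sigma\mid\mathcal{F}_n)^{p-1}\Big\|_{\infty},\qquad \sigma=w^{-\frac{1}{p-1}}.
\]
Necessity and \eqref{Ap_con} will come from testing \eqref{norm} on localized functions. Sufficiency and \eqref{M_con} will come from Theorem \ref{thm:max_sparse} together with a norm estimate for the simple sparse operator $\mathbb{S}$.

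For \eqref{Ap_con}, fix $n$ and a set $B\in\mathcal{F}_n$. Test \eqref{norm} on $f=\sigma\mathbf{1}_B$, truncating $\sigma$ first if it is not locally integrable. Since $Mf\ge \mathbb{E}(\sigma\mathbf{1}_B\mid\mathcal{F}_n)=\mathbf{1}_B\,\mathbb{E}(\sigma\mid\mathcal{F}_n)$, we get
\[
\int_B \mathbb{E}(\sigma\mid\mathcal{F}_n)^p\, w\,d\mu\le \|M\|^p\int_B\sigma\,d\mu .
\]
Both sides are integrals over an arbitrary $B\in\mathcal{F}_n$, and $\mathbb{E}(\sigma\mid\mathcal{F}_n)^p$ is $\mathcal{F}_n$-measurable. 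Hence the inequality passes to conditional expectations:
\[
\mathbb{E}(\sigma\mid\mathcal{F}_n)^p\,\mathbb{E}(w\mid\mathcal{F}_n)\le\|M\|^p\,\mathbb{E}(\sigma\mid\mathcal{F}_n)\quad\text{a.e.}
\]
Dividing by $\mathbb{E}(\sigma\mid\mathcal{F}_n)$ on the set where it is positive gives $\mathbb{E}(w\mid\mathcal{F}_n)\,\mathbb{E}(\sigma\mid\mathcal{F}_n)^{p-1}\le\|M\|^p$. Taking the supremum over $n$ yields $[w]_{A_p}\le\|M\|^p$. The truncation is removed by monotone convergence, and the same argument shows that $\sigma$ is locally integrable whenever \eqref{norm} holds.

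For \eqref{M_con}, Theorem \ref{thm:max_sparse} reduces the problem to proving $\|\mathbb{S}|f|\|_{L^p(w)}\lesssim[w]_{A_p}^{1/(p-1)}\|f\|_{L^p(w)}$. The simple sparse operator should have the form $\mathbb{S}f=\sup_j \mathbb{E}(|f|\mid\mathcal{F}_{\tau_j})\mathbf{1}_{\{\tau_j<\infty\}}$, or equivalently a sum over principal sets with disjointly supported pieces. The key structural property is that the principal sets $E_j=\{\tau_j<\infty\}$ admit pairwise disjoint cores $F_j\subset E_j$ with conditional sparsity $\mathbb{E}(\mathbf{1}_{F_j}\mid\mathcal{F}_{\tau_j})\ge\frac12$ on $E_j$. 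Pointwise,
\[
\mathbb{S}f\le\sum_j\mathbb{E}(|f|\mid\mathcal{F}_{\tau_j})\mathbf{1}_{F_j}\cdot(\text{bounded overlap}),
\]
which is the step where the "simple" structure is used. Writing $|f|=g\sigma$, we estimate
\[
\mathbb{E}(|f|\mid\mathcal{F}_{\tau_j})=\mathbb{E}(\sigma\mid\mathcal{F}_{\tau_j})\,\mathbb{E}^{\sigma}(g\mid\mathcal{F}_{\tau_j}).
\]
We then raise to the power $p$ and integrate against $w$ over $F_j\subset E_j$. Using $w(F_j)\le w(E_j)$ and the $A_p$ condition at the stopping time $\tau_j$, which holds by optional sampling since $A_p$ is a condition for every $n$, we get
\[
\mathbb{E}(w\mid\mathcal{F}_{\tau_j})\,\mathbb{E}(\sigma\mid\mathcal{F}_{\tau_j})^{p}\le[w]_{A_p}\,\mathbb{E}(\sigma\mid\mathcal{F}_{\tau_j}).
\]
This leaves $[w]_{A_p}\sum_j\int_{E_j}\mathbb{E}^\sigma(g\mid\mathcal{F}_{\tau_j})^p\sigma$.

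The main obstacle is converting $\sigma(E_j)$ back to $\sigma(F_j)$ at the correct power of $[w]_{A_p}$; a crude bound would only give the exponent $\max\{1,\frac1{p-1}\}$. To avoid this, I would follow Lerner/Moen and use the factorization
\[
\frac{\mathbb{E}(\sigma\mid\mathcal{F}_{\tau_j})\,\mu(F_j)}{\mu(E_j)}\lesssim\mathbb{E}(\sigma\mid\mathcal{F}_{\tau_j})^{\frac{1}{p}}\,\mathbb{E}(w\mid\mathcal{F}_{\tau_j})^{-\frac{1}{p'}\cdot\frac{1}{p-1}}\,\mathbb{E}(\sigma\mathbf{1}_{F_j}\mid\mathcal{F}_{\tau_j})^{\frac{1}{p'}}\cdots,
\]
built from Hölder's inequality in the form $\mathbb{E}(\mathbf{1}_{F_j}\mid\mathcal{F}_{\tau_j})\le\mathbb{E}(w\mathbf{1}_{F_j}\mid\mathcal{F}_{\tau_j})^{1/p}\,\mathbb{E}(\sigma\mathbf{1}_{F_j}\mid\mathcal{F}_{\tau_j})^{1/p'}$ and conditional sparsity. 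This lets each term be controlled by $[w]_{A_p}^{p'}$ times $\int_{F_j}(M^\sigma g)^p\sigma$, in the spirit of Buckley's bound. Summing over the disjoint $F_j$ and applying Doob's inequality for the weighted maximal operator $M^\sigma$ on $L^p(\sigma)$, which has a constant of order $p'$ independent of the weight, gives $\|\mathbb{S}f\|_{L^p(w)}^p\lesssim[w]_{A_p}^{p'}\|f\|_{L^p(w)}^p$. Since $p'/p=\frac{1}{p-1}$, this is exactly \eqref{M_con}, and it also proves the sufficiency direction.
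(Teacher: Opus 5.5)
Your proposal is correct and follows the paper's route. Necessity comes from testing on $f=\sigma\mathbf 1_B$ and passing to conditional expectations. Sufficiency uses Theorem \ref{thm:max_sparse}, the $A_p$ bound at the stopping times, H\"older with $w^{1/p}\sigma^{1/p'}=1$ plus conditional sparsity to pass from $E_m$ to the disjoint $S_m$, and Doob's inequality for $M^{\sigma}$ in $L^p(\sigma)$, for a total of $[w]_{A_p}^{p'}$.

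Two small corrections. First, no overlap argument is needed: by definition $\mathbb{S}|f|=\sum_m|f|_{\tau^{(m)}}\mathbf 1_{S_m}$, so you can take $F_m=S_m$, and the $\sup$ form you mention is not equivalent in general. Second, your \emph{crude} continuation is already sharp. On $E_m$, conditional sparsity and H\"older give
$\sigma_{\tau^{(m)}}\le 2^{p'}\bigl(w_{\tau^{(m)}}\sigma_{\tau^{(m)}}^{p-1}\bigr)^{p'-1}(\sigma\mathbf 1_{S_m})_{\tau^{(m)}}\le 2^{p'}[w]_{A_p}^{p'-1}(\sigma\mathbf 1_{S_m})_{\tau^{(m)}}$.
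Together with your first factor $[w]_{A_p}$, this yields exactly $[w]_{A_p}^{p'}$.
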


Finally, leveraging sparse domination techniques,
we provide a unified perspective on two-weight estimates for Doob's maximal operator. The sparse domination of the Doob maximal operator established in Theorem \ref{thm:max_sparse} serves as the foundational result from which all three parts of Theorem \ref{thm-es} naturally follow. This approach not only yields weighted estimates but also reveals the underlying structural reasons for their validity. Our first result generalizes the $B_p$ framework of Hyt\"{o}nen and P\'erez \cite{MR3092729} to filtered measure spaces, eliminating the double supremum phenomenon present in \cite{MR3004953}. The second result embodies the core Hyt\"{o}nen-P\'erez philosophy---replacing partial $A_p$ control with weaker $A_{\infty}^*$ conditions---through a probabilistic sparse domination framework. The third estimate incorporates both the $A_{p^{\prime}}$ characteristic of $\sigma$ and its $A_{\infty}^*$ constant, where the logarithmic term captures the complexity of the stopping time selection process. While corresponding to Lerner-Moen type estimates \cite{MR3145553}, our sparse methodology in the filtered setting provides a probabilistic interpretation of the logarithmic factor, highlighting the versatility of sparse techniques across different mathematical contexts.

\begin{theorem} \label{thm-es}Let $1<p<\infty.$
\begin{enumerate}[\rm(1)]
                 \item \label{Bound1}If $(u,w)\in B_p,$ then $\|M\|_{L^p(w)\rightarrow L^p(u)}\lesssim[u,w]_{B_p}^{\frac{1}{p}};$
                 \item \label{Bound2}If $(u,w)\in A_p$ and $\sigma=w^{-\frac{1}{p-1}}\in A^*_{\infty},$ then $\|M\|_{L^p(w)\rightarrow L^p(u)}\lesssim[u,w]_{A_p}^{\frac{1}{p}}[\sigma]_{A^*_{\infty}}^{\frac{1}{p}};$
                 \item\label{Bound3}If $w\in A_p$ and $\sigma=w^{-\frac{1}{p-1}},$ then $\|M\|_{L^p(w)\rightarrow L^p(w)}\lesssim[\sigma]_{(A_{p'})^{\frac{1}{p'}}(A^*_{\infty})^{\frac{1}{p}}}\Big(1+\log_2[w]_{A_p}\Big)^{\frac{1}{p}}.$
\end{enumerate}
\end{theorem}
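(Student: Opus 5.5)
All three estimates will be derived from the pointwise domination $Mf\le C\,\mathbb{S}|f|$ of Theorem~\ref{thm:max_sparse}. By duality, it suffices to bound the bilinear form
\[
\int \mathbb{S}|f|\, g\,u\,d\mu=\sum_{i,j}\int \mathbb{E}(|f|\mid\mathcal{F}^{(i)}_{\tau^{(i)}})\mathbf{1}_{\{\tau^{(i)}<\infty\}}\,g u\,d\mu
\]
for $g\in L^{p'}(u)$. We write $f=h\sigma$ with $h\in L^p(\sigma)$, so that $\|f\|_{L^p(w)}=\|h\|_{L^p(\sigma)}$. For each stopping set $E=\{\tau^{(i)}<\infty\}$ we insert the weighted averages
\[
\mathbb{E}(h\sigma\mid\mathcal{F}_\tau)=\mathbb{E}(\sigma\mid\mathcal{F}_\tau)\,\mathbb{E}^\sigma(h\mid\mathcal{F}_\tau),
\]
and similarly for $gu$. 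The key structural fact is conditional sparsity. For each stopping set there is a disjoint ``core'' $D^{(i)}\subset E^{(i)}$ with $\mathbb{E}(\mathbf{1}_{D^{(i)}}\mid\mathcal{F}_{\tau^{(i)}})\ge \eta\,\mathbf{1}_{E^{(i)}}$. This lets us convert $\mathbb{E}(\cdot\mid\mathcal{F}_\tau)$ applied to a $\mathcal{F}_\tau$-measurable quantity into an integral over $D^{(i)}$, up to the factor $\eta^{-1}$. Because the $D^{(i)}$ are disjoint, summing over $i$ gives an integral over $\Omega$.

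For part~(1) we use that Doob's sparse operator needs only the one-sided form. The plan is to show
\[
\|\mathbb{S}(h\sigma)\|_{L^p(u)}^p\lesssim \sum_i \int_{E^{(i)}} \mathbb{E}(\sigma\mid\mathcal{F}_{\tau})^p\,\mathbb{E}^\sigma(h\mid\mathcal{F}_\tau)^p\,u\,d\mu.
\]
For a simple sparse (Doob-type) operator, the sets are nested with geometric decay, so $(\sum_i a_i\mathbf{1}_{E_i})^p\lesssim\sum_i a_i^p\mathbf{1}_{E_i}$ when the averages increase along the stopping chain. We then rewrite each term as
\[
\mathbb{E}\!\left(\mathbb{E}(u\mid\mathcal{F}_\tau)\,\mathbb{E}(\sigma\mid\mathcal{F}_\tau)^p\right)\mathbb{E}^\sigma(h\mid\mathcal{F}_\tau)^p .
\]
The $B_p$ condition, $\mathbb{E}(u\mid\mathcal{F}_\tau)\mathbb{E}(\sigma\mid\mathcal{F}_\tau)^p\lesssim[u,w]_{B_p}\,\mathbb{E}(\sigma\exp\mathbb{E}(\log\sigma^{-1}\mid\cdot))$-type quantity, is used here in its martingale form $\le [u,w]_{B_p}\mathbb{E}(\sigma\mid\mathcal{F}_\tau)$ over the core. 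We then pass to the cores and dominate
\[
\sum_i\int_{D^{(i)}}\sigma\,(M^\sigma h)^p \lesssim \|M^\sigma h\|_{L^p(\sigma)}^p\lesssim\|h\|^p_{L^p(\sigma)},
\]
where $M^\sigma$ is the weighted Doob maximal operator, which is bounded with constant $p'$ independent of the weights. Taking $p$-th roots gives the exponent $1/p$.

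Part~(2) follows the same chain, but we split
\[
\mathbb{E}(u\mid\mathcal{F}_\tau)\mathbb{E}(\sigma\mid\mathcal{F}_\tau)^{p}=\bigl[\mathbb{E}(u\mid\mathcal{F}_\tau)\mathbb{E}(\sigma\mid\mathcal{F}_\tau)^{p-1}\bigr]\,\mathbb{E}(\sigma\mid\mathcal{F}_\tau).
\]
The bracket is bounded by $[u,w]_{A_p}$. What remains is $\sum_i \mathbb{E}(\sigma\mathbf{1}_{E^{(i)}})\,\mathbb{E}^\sigma(h\mid\mathcal{F}_\tau)^p$. This is a Carleson-type sum, and the Carleson constant of the sparse family with respect to $\sigma$ is controlled by $[\sigma]_{A_\infty^*}$ through the bound $\sum_{E'\subset E}\sigma(E')\le\int_E M(\sigma\mathbf{1}_E)\lesssim[\sigma]_{A^*_\infty}\sigma(E)$. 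The weighted Carleson embedding theorem on filtered spaces then bounds this sum by $[\sigma]_{A^*_\infty}\|h\|^p_{L^p(\sigma)}$.

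For part~(3) we take $u=w$ and stratify the stopping sets by the size of $\mathbb{E}(w\mid\mathcal{F}_\tau)\mathbb{E}(\sigma\mid\mathcal{F}_\tau)^{p-1}\in(2^{k-1},2^k]$, with $0\le k\le 1+\log_2[w]_{A_p}$. On each layer we run the argument of part~(2) with $A_p$ replaced by $2^k$. The $\sigma$-dependence comes through the mixed constant $[\sigma]_{(A_{p'})^{1/p'}(A^*_\infty)^{1/p}}$, defined via suprema of the products of these quantities. We then sum the layers in $\ell^p$, which gives the factor $(1+\log_2[w]_{A_p})^{1/p}$.

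The main obstacle is the first reduction, namely controlling $\|\mathbb{S}\|_{L^p(u)}$ by the $p$-th powers without duality losses in the two-weight setting. The alternative is to dualize and use the disjoint cores together with the $u$-side maximal function, but that would bring in the $u$-side testing constant. I expect to rely on the chain (principal set) structure of the simple sparse operator, whose geometric growth of averages gives the $\ell^p$ collapse directly.
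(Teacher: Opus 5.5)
\textbf{Part (1) has a genuine gap.} After $u_{\tau}\sigma_{\tau}^{p}\le [u,w]_{B_p}\exp((\log\sigma)_{\tau})\le [u,w]_{B_p}\,\sigma_{\tau}$, the only information left from $B_p$ is the two-weight $A_p$ bound $u_\tau\sigma_\tau^{p-1}\le [u,w]_{B_p}$. You must then control $\sum_i\int_{E_i}\sigma\,(h_{\tau^{(i)},\sigma})^p\,d\mu$.

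Conditional sparsity concerns $\mu$ only. For $\mathcal{F}_{\tau^{(i)}}$-measurable $X\ge 0$ it gives $\int_{E_i}\sigma_{\tau^{(i)}}X\,d\mu\le 2\int_{S_i}\sigma_{\tau^{(i)}}X\,d\mu$. So the weight on the core is $\sigma_{\tau^{(i)}}$, not $\sigma$. Summing yields only $\int M\sigma\,(M_\sigma h)^p\,d\mu$, which is not controlled by $\|h\|^p_{L^p(\sigma)}$, already for $h\equiv 1$.

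To put $\sigma$ itself on the core you need $\sigma_{\tau^{(i)}}\mathbf{1}_{E_i}\lesssim(\sigma\mathbf{1}_{S_i})_{\tau^{(i)}}$. That is sparsity, or a Carleson condition, of the stopping family with respect to $\sigma\,d\mu$, an $A_\infty$-type property of $\sigma$. In Theorem~\ref{thm_Ap} the paper gets it from H\"older and the one-weight constant $[w]_{A_p}$, at the cost of an extra power. Adding $[\sigma]_{A^*_\infty}$ would only give a bound of type (2). The hypothesis $(u,w)\in B_p$ provides nothing of this kind.

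Keeping the geometric mean does not rescue the direct route either. The sum $\sum_i\int_{S_i}\exp((\log\sigma)_{\tau^{(i)}})\,(h_{\tau^{(i)},\sigma})^p\,d\mu$ has no evident bound for general $h$.

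\textbf{The missing idea is the paper's reduction to testing.} By Lemma~\ref{sa_test} and Remark~\ref{rem_test} it suffices to treat $f=\sigma\mathbf{1}_B$ with $B\in\mathcal{F}_n$, where the weighted averages are at most $1$. What remains is
\[
\sum_m\int_{E_m}\sigma^p_{\tau^{(m)}}u\,d\mu\le [u,w]_{B_p}\sum_m\int_{E_m}\exp((\log\sigma)_{\tau^{(m)}})\,d\mu .
\]
Here the integrand is $\mathcal{F}_{\tau^{(m)}}$-measurable and integrated against $d\mu$, so $\mu$-sparsity legitimately moves it to $S_m$. Jensen gives $\exp((\log\sigma)_{\tau^{(m)}})\le((\sigma^{1/q})_{\tau^{(m)}})^q\le {}^*M_n(\sigma^{1/q})^q$. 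Disjointness of the $S_m\subset B$ and Doob's inequality in $L^q$ then give $(q')^q\sigma(B)$, and $(q')^q\to e$.

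\textbf{Parts (2) and (3).} Your direct route is a workable alternative to testing here. You would need to supply a Carleson embedding in the filtered setting with a \emph{conditional} Carleson constant, which can be proved by splitting level sets according to the first stopping index. That constant is controlled exactly by the paper's estimate $\sum_{j\ge i}(\sigma\mathbf{1}_{E_j})_{\tau^{(i)}}\lesssim({}^*M_{\tau^{(i)}}\sigma)_{\tau^{(i)}}$ on $E_i$, followed by $A^*_\infty$ or the mixed constant. In (3) the layers must be the $\mathcal{F}_{\tau^{(m)}}$-measurable pieces $E_m\cap\{2^{k-1}<w_{\tau^{(m)}}\sigma^{p-1}_{\tau^{(m)}}\le 2^k\}$, because this quantity is not constant on $E_m$.

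Finally, the ``main obstacle'' you name is not one. $\mathbb{S}$ is built on the pairwise disjoint $S_i$, so $(\mathbb{S}|f|)^p=\sum_i(|f|_{\tau^{(i)}})^p\mathbf{1}_{S_i}$ exactly.
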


The article is organized as follows. In Section \ref{sec:mar_theory}, we present preliminary definitions and foundational results. Section \ref{sec:mar_trans} is devoted to proofs for the martingale transform, while Section \ref{sec:mar_max} contains proofs for Doob's maximal operator.

Throughout this work, the letters \( C \), \( C_1 \), and \( C_2 \) denote positive constants that may vary between occurrences. We employ the notation \( A \lesssim B \) to indicate the existence of an absolute constant \( C > 0 \), independent of the weight constant, such that \( A \leq CB \). Similarly, we write \( A \approx B \) when both \( A \lesssim B \) and \( B \lesssim A \) hold.

\section{Preliminaries}\label{sec:mar_theory}
This section establishes the theoretical foundation for sparse operator theory in martingale settings. It begins with a review of fundamental concepts—conditional expectation, martingales, martingale transforms, and stopping times—along with essential convergence theorems for martingale transforms. The new framework of locally filter-shifted stopping times and conditional sparsity is then introduced, forming the basis for the definitions of sparse operators and simple sparse operators. The section concludes with a review of weights, which are essential for the weighted norm inequalities developed later.

\subsection{Conditional Expectations and Martingales}
Let \((\Omega, \mathcal{F}, \mu)\) be a complete probability space, \(\mathcal{F}_1 \subset \mathcal{F}\) be a complete sub-$\sigma$-field. For \(f \in L^1(\Omega, \mathcal{F}, \mu)\),  let
\[
\eta(F) = \int_F f d\mu, \text{for all } F \in \mathcal{F}_1.
\] 
Then $\eta$ is a complex measure on $\mathcal{F}_1$ and absolutely continuous with respect to \(\mu|_{\mathcal{F}_1}\). In view of Radon-Nikod\'{y}m's theorem, there is a unique function (the Radon-Nikod\'{y}m derivative of 
$\eta$  with respect to $\mu$) denoted by \(h ,\) which is measurable with respect to \(\mathcal{F}_1\) and integrable with respect to \(\mu,\) and such that
\[
\int_F hd\mu = \int_F f d\mu, \text{for all } F \in \mathcal{F}_1.
\]
This bridges measure theory with probabilistic conditioning, providing the mathematical foundation for the rigorous definition of conditional expectation.

\begin{definition}
Let $(\Omega, \mathcal{F}, \mu)$ be a complete probability space, $\mathcal{F}_1 \subset \mathcal{F}$ a complete sub-$\sigma$-field, and $f \in L^1(\Omega, \mathcal{F}, \mu)$. The conditional expectation of $f$ with respect to $\mathcal{F}_1$, denoted by \(\mathbb{E}_{\mathcal{F}_1}(f)\),  \(\mathbb{E}(f|\mathcal{F}_1)\) or \(f_1\), is the unique $\mathcal{F}_1$-measurable function satisfying
\[
\int_F \mathbb{E}(f|\mathcal{F}_1)  d\mu = \int_F f  d\mu, \text{for all }  F \in \mathcal{F}_1.
\]
\end{definition}

Before recalling the definition of a martingale, we must first formally define the notion of a filtered probability space. 

\begin{definition}
A filtered probability space is a quadruple $(\Omega, \mathcal{F}, \mu, \{\mathcal{F}_n\}_{n \ge 0})$ where
\begin{itemize}
    \item[(P1)]  $(\Omega, \mathcal{F}, \mu)$ is a complete probability space.
    \item[(P2)]  $\{\mathcal{F}_n\}_{n \ge 0}$ is a filtration (an increasing family of sub-$\sigma$-algebras of $\mathcal{F}$).
    \item[(P3)]  Each $(\Omega, \mathcal{F}_n, \mu)$ is complete.
    \item[(P4)]  $\mathcal{F} = \bigvee_{n \geq 0} \mathcal{F}_n$ (the $\sigma$-algebra generated by the union).
\end{itemize}
\end{definition}

This structure provides the temporal framework necessary for defining martingales and other stochastic processes.

\begin{definition} \label{d:adapted} Let $ v= (v_n)_{n \ge 0}$ be a process on $(\Omega, \mathcal{F}, \mu, \{\mathcal{F}_n\}_{n \ge 0})$.Then $v$ is said to be adapted, if $v_n$ is $\mathcal{F}_n$ measurable, for all $n \ge 0$. Its maximal operator is defined as \( Mv = \sup |v_n|.\)
\end{definition}

The concept of adapted processes leads naturally to the fundamental class of martingales, which are characterized by their distinctive conditional expectation properties. Martingales play a central role in modern stochastic analysis due to their rich mathematical structure and wide applications.

\begin{definition}\label{d:adapted} Let $f = (f_n)_{n \ge 0}$ be an adapted process on $(\Omega, \mathcal{F}, \mu, \{\mathcal{F}_n\}_{n \ge 0})$. Then $f$ is said to be a martingale (with respect to $\{\mathcal{F}_n\}_{n \ge 0}$) if each $f_n \in L^1$, and
\[
\mathbb E(f_{n+1} | \mathcal{F}_n) = f_n, \quad n = 0, 1, 2, \cdots.
\]
\end{definition}

Having established the fundamental concept of martingales, we now turn to their important subclasses characterized by integrability conditions. The following definition introduces two particularly significant classes of martingales based on their $L^p$-boundedness properties.

\begin{definition}\label{}
Let \(1\leq p<\infty\). For any martingale \(f=(f_n)_{n\geq0}\), denote \(\|f\|_p = \sup_n \|f_n\|_p\). When \(\|f\|_p < \infty\), we say that \(f\) is an \(L^p\)-bounded martingale in symbols \(f\in L^p\). When \(f_n = \mathbb E(f_{\infty}|\mathcal{F}_n)\) for all \(n\geq0\), for some \(f_\infty\in L^p\), we say that \(f\) is an \(L^p\)-uniformly integral martingale  in symbols \(f\in L_u^p\).
\end{definition}

For an \(L^1\)-bounded martingale, its maximal operator $M$ satisfies the weak type $(1,1)$ bound which can be found in \cite[Theorem 2.1.1]{MR1224450}.  If $f\in L_u^1,$ we invoke the following Lemma \ref{lem: weak}. 

\begin{lemma}\cite[p.34]{MR1224450}\label{lem: weak}
The maximal operator \( M \) is an operator of weak type \( (1, 1) \), that is for all \( f \in L_u^1 \) and \(\lambda > 0\), we have
\[
\lambda\mu( Mf > \lambda )\leq  \| f_\infty \|_1.
\]
\end{lemma}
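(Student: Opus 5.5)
The plan is the classical Doob stopping-time argument, transplanted to a uniformly integrable martingale. Fix \(f\in L_u^1\), so that \(f_n=\mathbb{E}(f_\infty\mid\mathcal{F}_n)\) for all \(n\), and fix \(\lambda>0\). I will introduce the stopping time
\[
\tau=\inf\{n\ge 0:\ |f_n|>\lambda\},
\]
with \(\inf\emptyset=+\infty\). Since \(f\) is adapted, \(\{\tau=n\}=\{|f_k|\le\lambda,\ k<n,\ |f_n|>\lambda\}\in\mathcal{F}_n\), so \(\tau\) is indeed a stopping time. Moreover
\[
\{Mf>\lambda\}=\{\sup_n|f_n|>\lambda\}=\{\tau<\infty\}=\bigcup_{n\ge 0}\{\tau=n\},
\]
and the sets \(\{\tau=n\}\) are pairwise disjoint.

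Next I will estimate each piece. On \(\{\tau=n\}\) we have \(|f_n|>\lambda\), so
\[
\lambda\mu(\tau=n)\le\int_{\{\tau=n\}}|f_n|\,d\mu .
\]
Conditional Jensen gives \(|f_n|=|\mathbb{E}(f_\infty\mid\mathcal{F}_n)|\le\mathbb{E}(|f_\infty|\mid\mathcal{F}_n)\). Because \(\{\tau=n\}\in\mathcal{F}_n\), the defining property of conditional expectation then yields
\[
\int_{\{\tau=n\}}|f_n|\,d\mu\le\int_{\{\tau=n\}}\mathbb{E}(|f_\infty|\mid\mathcal{F}_n)\,d\mu=\int_{\{\tau=n\}}|f_\infty|\,d\mu .
\]

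Finally I will sum over \(n\), using countable additivity and the disjointness of the sets \(\{\tau=n\}\):
\[
\lambda\mu(Mf>\lambda)=\sum_n\lambda\mu(\tau=n)\le\int_{\{\tau<\infty\}}|f_\infty|\,d\mu\le\|f_\infty\|_1 .
\]

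The only point needing care is the step \(|f_n|\le\mathbb{E}(|f_\infty|\mid\mathcal{F}_n)\). This is exactly where the uniform integrability hypothesis \(f\in L_u^1\) enters. It is what allows the bound to be stated with \(\|f_\infty\|_1\) in place of \(\sup_n\|f_n\|_1\). Everything else is routine measure theory.
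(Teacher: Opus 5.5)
Your proof is correct, and it is the standard Doob stopping-time argument; the paper gives no proof of its own here and simply cites this weak-type bound from Long's monograph. Your argument in fact gives the slightly sharper bound $\lambda\mu(Mf>\lambda)\le\int_{\{Mf>\lambda\}}|f_\infty|\,d\mu$. Note also that for $f\in L^1_u$ one already has $\sup_n\|f_n\|_1=\|f_\infty\|_1$, so uniform integrability does not improve the constant; what it supplies is the representation $f_n=\mathbb{E}(f_\infty\mid\mathcal{F}_n)$, without which a bound in terms of $\|f_\infty\|_1$ can fail.
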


\begin{remark}\label{re: uniform}
It is known (\cite[p. 28]{MR1224450}) that when \(1 < p < \infty\), \(L^p = L^p_u\), and \(\|f\|_p=\|f_\infty\|_p\), but \(L^1_u\subsetneq L^1\) in general. For all \(f=(f_n)\in L^1\), we have that \(\lim_{n\rightarrow+\infty}f_n\) exists pointwise and \(\|\lim_{n\rightarrow+\infty}f_n\|_1\leq\|f\|_1\). Furthermore,  \(\|\lim_{n\rightarrow+\infty}f_n\|_1=\|f\|_1\) holds if and only if \(f=(f_n)\in L^1_u\), where \(\lim_{n\rightarrow+\infty}f_n=f_{\infty}\). When we  discuss \(L^p_u(p\geq1)\), we  may abuse notation by writing simply $f_\infty\in L^p$ instead of  $f=(f_n)\in L^p_u$ without  causing confusion. In the case, we often suppress the subscripts $\infty$ and $u.$ Thus, there is no confusion for the symbol \(L^p\) which denotes the usual Lebesgue spaces as well as the spaces of martingales. 
\end{remark}

\subsection{Martingale Transforms}
Lemma \ref{lem: weak} provides the essential foundation for studying the convergences of martingales. To systematically exploit the foundation,
Burkholder \cite{MR208647} used martingale transforms to study the interaction between martingales and adapted processes. Here and in what follows, we keep the convention: for any process $(\gamma_n)_{n \ge 0}$, $\gamma_{-1}$ is meant as $0$ except when otherwise stated. 

\begin{definition}\label{d:transform}Let $f  = (f_n)_{n \ge 0}$ be a martingale and $v = (v_n)_{n \ge 0}$ be an adapted process on $(\Omega, \mathcal{F}, \mu, \{\mathcal{F}_n\}_{n \ge 0})$.  Let $\Delta_0f = f_0$, $\Delta_1f = f_1 - f_0$, $\cdots$ so that $f_n=\sum_{k = 0}^{n}\Delta_kf$, $n\ge0$. Then the following transform
\[
g_n = \sum_{k=1}^n v_{k-1} \Delta_k f, \quad n \ge 1, \quad g_0 = 0,
\]
where $f = (f_n)_{n \ge 0}$ is a martingale, is called a martingale transform on $(\Omega, \mathcal{F}, \mu, \{\mathcal{F}_n\}_{n \ge 0})$. 
\end{definition}

\begin{remark}When $g_n \in L^1$, for all $n\geq0$, $g = (g_n)_{n \geq 0}$ is also a martingale provided $f$ is. This condition is satisfied, for example, if each $v_n$ is bounded. 
\end{remark}

The following fundamental result links martingale transforms to maximal function theory. It extends Doob's martingale convergence theorem to transformed processes, with convergence guaranteed on the set where the multiplier has finite maximal function.

\begin{lemma}\cite[Theorem 1]{MR208647}\label{lem:conver}
Let \( f = (f_n)_{n \ge 0}\) be an \( L^1 \)-bounded martingale and \( g \) be \( f \)'s martingale transform with the multiplier \( v = (v_n)_{n \ge 0} \).
Then \( g = (g_n)_{n \ge 0} \) converges a.e. on the set \( \{ Mv(x)  < \infty \} \).
\end{lemma}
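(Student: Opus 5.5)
The plan is to reduce to a bounded, predictable multiplier by a stopping-time localization in $v$, and then to prove convergence for bounded multipliers via a Gundy-type decomposition of $f$. Fix $\lambda\in\mathbb{N}$ and set
\[
\tau_\lambda=\inf\{n\ge 0:\ |v_n|>\lambda\}\qquad(\inf\emptyset=\infty).
\]
Since $v$ is adapted, $\{\tau_\lambda\ge k\}=\{\tau_\lambda\le k-1\}^c\in\mathcal{F}_{k-1}$. Hence
\[
g^{(\lambda)}_n=\sum_{k=1}^n v_{k-1}\mathbf{1}_{\{\tau_\lambda\ge k\}}\Delta_k f
\]
is again a martingale transform of $f$. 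Its multiplier $\tilde v_{k-1}=v_{k-1}\mathbf{1}_{\{\tau_\lambda\ge k\}}$ is adapted and satisfies $|\tilde v_{k-1}|\le\lambda$: on $\{\tau_\lambda\ge k\}$ we have $k-1<\tau_\lambda$, so $|v_{k-1}|\le\lambda$. On the set $\{Mv\le\lambda\}\subseteq\{\tau_\lambda=\infty\}$ we have $g_n=g^{(\lambda)}_n$ for every $n$. Since $\{Mv<\infty\}=\bigcup_{\lambda\in\mathbb{N}}\{Mv\le\lambda\}$ is a countable union, it suffices to prove the following claim.

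\emph{Claim.} If $f$ is $L^1$-bounded and the multiplier is uniformly bounded (after rescaling, by $1$), then the transform converges a.e.

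For the claim, fix $K>0$ and apply a Gundy-type decomposition at level $K$, which I will take as the standard tool (constructed by stopping when $|f_n|>K$ and taking predictable compensators). It writes $f=a+b+c$, where $a,b,c$ are martingales with the following properties:
\begin{enumerate}[(i)]
\item $\|a\|_2^2\lesssim K\|f\|_1$;
\item $\sum_k|\Delta_k b|\in L^1$, so this sum is finite a.e.;
\item $\Delta_k c=0$ for all $k$ outside a set $E_K$ with $\mu(E_K)\lesssim\|f\|_1/K$.
\end{enumerate}
The weak type bound for $M$ (Lemma \ref{lem: weak} and its $L^1$-bounded version) is what controls $\mu(E_K)\le\mu(Mf>K)$.

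By linearity, $Tf=Ta+Tb+Tc$, and I treat each piece separately.
\begin{itemize}
\item Since the increments of $a$ are orthogonal in $L^2$ and $|v_{k-1}|\le 1$, we get $\|(Ta)_n\|_2\le\|a\|_2$. So $Ta$ is an $L^2$-bounded martingale and converges a.e.
\item The series for $Tb$ converges absolutely a.e., because $\sum_k|v_{k-1}\Delta_k b|\le\sum_k|\Delta_k b|<\infty$ a.e.
\item Off $E_K$, every increment of $Tc$ vanishes, so $Tc$ is constant there.
\end{itemize}
Thus $Tf$ converges a.e. on $\Omega\setminus E_K$. Letting $K\to\infty$, along a sequence $K_j$ with $\sum_j\mu(E_{K_j})<\infty$ if needed, gives a.e. convergence on all of $\Omega$.

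The main obstacle is constructing the decomposition in a general (non-atomic) filtration. The difficulty is that the jump $\Delta_\sigma f$ at the stopping time $\sigma=\inf\{n:|f_n|>K\}$ is not bounded by $K$. The fix I would try first is the usual one:
\begin{itemize}
\item put the large jumps $\Delta_k f\,\mathbf{1}_{\{\sigma=k\}}$, together with everything after $\sigma$, into $c$, which is supported on $\{\sigma<\infty\}=\{Mf>K\}$;
\item put their predictable compensators $\mathbb{E}(\Delta_k f\,\mathbf{1}_{\{\sigma=k\}}\mid\mathcal{F}_{k-1})$ into $b$; their total $L^1$ norm is at most $\sum_k\|\Delta_k f\,\mathbf{1}_{\{\sigma=k\}}\|_1\le\mathbb{E}|\Delta_\sigma f|\lesssim\|f\|_1$;
\item put the remaining part, whose stopped increments are bounded by $2K$, into $a$; its $L^2$ estimate comes from $\mathbb{E}\sum_k|\Delta_k a|^2\lesssim K\,\mathbb{E}\sum_k|\Delta_k a|$-type bounds together with the $L^1$-boundedness of $f$.
\end{itemize}
Alternatively, one could simply cite Burkholder's original argument \cite{MR208647} for the bounded-multiplier case.
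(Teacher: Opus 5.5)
The paper does not prove this lemma; it is quoted from Burkholder \cite{MR208647}. Your proposal therefore has to stand on its own, and in outline it does.

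The localization by $\tau_\lambda=\inf\{n:|v_n|>\lambda\}$ is correct and standard. Note that the paper only ever uses the bounded case (to make $Tf$ well defined in Definition \ref{d:transform1}), which is exactly your Claim. For the Claim, Gundy's decomposition is available in arbitrary filtrations with properties (i)--(iii), and your three-piece argument is sound. $Ta$ is an $L^2$-bounded martingale, $Tb$ converges absolutely, and $Tc$ is frozen off $\{Mf>K\}$, where $\mu(Mf>K)\le\|f\|_1/K\to0$. No Borel--Cantelli is needed, since the divergence set lies in every $E_K$ up to a null set, and only $a$ has to be a martingale.

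A classical alternative for this step first reduces to $f\ge0$ by the Krickeberg decomposition. It then uses the Doob decomposition $f\wedge K=z_0+m-A$ of the bounded supermartingale $f\wedge K$. The $L^2$ bound there comes from $\mathbb{E}A_\infty^2\le 2K\,\mathbb{E}A_\infty\le 2K\|f\|_1$, and on $\{Mf\le K\}$ one has $\Delta_k f=\Delta_k m-\Delta_k A$. Your route avoids the positivity reduction at the cost of compensator bookkeeping.

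The weak point is your sketch of the good part, in two places.

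First, the compensator must enter $b$ with a minus sign, so that $\Delta_k a=\Delta_k f\,\mathbf{1}_{\{\sigma>k\}}+\mathbb{E}(\Delta_k f\,\mathbf{1}_{\{\sigma=k\}}\mid\mathcal{F}_{k-1})$; otherwise $a$ is not a martingale. Also, in your variant $c$ has increments $\Delta_k f\,\mathbf{1}_{\{\sigma\le k\}}$, which are not martingale differences. So $b$ and $c$ are not martingales as you claim, but this is harmless because you never use it.

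Second, an estimate of the form $\mathbb{E}\sum_k|\Delta_k a|^2\lesssim K\,\mathbb{E}\sum_k|\Delta_k a|$ cannot give (i), because $\sum_k|\Delta_k a|$ need not be integrable even for bounded martingales. What works is the following. Since $\mathbb{E}(\Delta_k f\,\mathbf{1}_{\{\sigma\ge k\}}\mid\mathcal{F}_{k-1})=0$, one has $\Delta_k a=X_k-\mathbb{E}(X_k\mid\mathcal{F}_{k-1})$ with $X_k=\Delta_k f\,\mathbf{1}_{\{\sigma>k\}}$ and $|X_k|\le 2K$. Hence $|\Delta_k a|\le 4K$ and $\mathbb{E}|\Delta_k a|^2\le\mathbb{E}|X_k|^2$. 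Then sum the identity
\[
|\Delta_k f|^2\mathbf{1}_{\{\sigma>k\}}=|f_k|^2\mathbf{1}_{\{\sigma>k\}}-|f_{k-1}|^2\mathbf{1}_{\{\sigma>k-1\}}+|f_{k-1}|^2\mathbf{1}_{\{\sigma=k\}}-2f_{k-1}\Delta_k f\,\mathbf{1}_{\{\sigma>k\}}
\]
over $k$, and use the following facts:
\begin{itemize}
\item $\mathbb{E}[f_{k-1}\Delta_k f\,\mathbf{1}_{\{\sigma\ge k\}}]=0$;
\item $|f_{k-1}|\le K$ on $\{\sigma\ge k\}$;
\item $\mathbb{E}|f_\sigma|\mathbf{1}_{\{\sigma<\infty\}}\le\|f\|_1$, by optional sampling for $|f|$ and Fatou;
\item $K\mu(\sigma<\infty)\le\|f\|_1$.
\end{itemize}
This gives $\sum_k\mathbb{E}|X_k|^2\le 6K\|f\|_1$, which is (i). With this repair, or simply by citing Gundy's theorem, your proof is complete.
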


Having established the basic martingale transform framework and its convergence properties, Burkholder \cite{MR542135} considered a practically significant specialization: transforms induced by uniformly bounded adapted processes. 

\begin{definition}\label{d:transform1}Let $v = (v_n)_{n \ge 0}$ be an adapted process  on $(\Omega, \mathcal{F}, \mu, \{\mathcal{F}_n\}_{n \ge 0}).$ If $v$ is uniformly bounded, then the following transform
\begin{equation}\label{d:mar-trans}
T f = \sum_{n=1}^{\infty} v_{n-1} \Delta_n f, \quad \text{for all }  L^1 \text{-bounded martingale } f = (f_n)_{n \ge 0}, 
\end{equation}
is also called a uniformly bounded martingale transform on $(\Omega, \mathcal{F}, \mu, \{\mathcal{F}_n\}_{n \ge 0})$. Moreover, let
\[
(Tf)_n= \sum_{k=1}^n v_{k-1} \Delta_k f, \quad n \ge 1, \quad (T f)_0 = 0,
\]
which, with no risk of
confusion, are denoted by 
\[Tf = ((T f)_n)_{n \ge 0}.\]
\end{definition}

Because of the convergence result (Lemma \ref{lem:conver}), $T f $ is well defined. Then the following weak-type $(1,1)$ inequality holds for uniformly bounded martingale transforms.

\begin{lemma}\cite[Theorem 1]{MR542135}\label{lem:weak_inequality} Let \( f = (f_n)_{n \ge 0}\) be an \( L^1 \)-bounded martingale and \(Tf \) be \( f \)'s martingale transform with the multiplier \( v = (v_n)_{n \ge 0} \). If  \( v \) is uniformly bounded in absolute value by 1. Then
\[
\lambda \mu(M(Tf) \ge \lambda) \le 2\sup_n \|f_n\|_1, \quad \lambda > 0.
\]
\end{lemma}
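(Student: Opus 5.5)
The plan is to prove the weak-type estimate with constant $2$ using Burkholder's special-function method. By homogeneity, replacing $f$ by $f/\lambda$ (which replaces $Tf$ by $Tf/\lambda$), we may take $\lambda=1$. We also write $g=Tf$, so that $d_k:=\Delta_k f$ and $\Delta_k g=v_{k-1}d_k$ with $|v_{k-1}|\le 1$.

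\emph{Choice of special function.} Introduce Burkholder's function
\[
U(x,y)=\begin{cases} |y|^2-|x|^2, & |x|+|y|<1,\\ 1-2|x|, & |x|+|y|\ge 1,\end{cases}
\]
and the majorized function $V(x,y)=\mathbf 1_{\{|y|\ge 1\}}-2|x|$. We will verify three properties.
\begin{enumerate}[(i)]
\item $V\le U$ everywhere.
\item $U(x,0)\le 0$ for all $x$.
\item There are bounded Borel functions $A,B$ such that, for all $x,y,h$ and every $|\varepsilon|\le 1$,
\[
U(x+h,\,y+\varepsilon h)\le U(x,y)+A(x,y)h+B(x,y)\varepsilon h .
\]
\end{enumerate}
For (iii) we take $A=-2x$, $B=2y$ inside the unit ``diamond'' and $A=-2\,\mathrm{sgn}(x)$, $B=0$ outside it. Properties (i) and (ii) are elementary case checks. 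For instance, in the region $|x|+|y|<1$ we have $|y|^2-|x|^2\ge -2|x|$, because $|y|^2\ge 0$ and $|x|^2\le 2|x|$ when $|x|\le1$.

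\emph{Supermartingale step.} Because $v_n$ is bounded, each $g_n$ is in $L^1$, and $U$ has at most linear growth, so every $U(f_n,g_n)$ is integrable. Apply (iii) with $x=f_n$, $y=g_n$, $h=d_{n+1}$, $\varepsilon=v_n$. Since $A(f_n,g_n)$, $B(f_n,g_n)$ and $v_n$ are $\mathcal F_n$-measurable and bounded, and $\mathbb E(d_{n+1}\mid\mathcal F_n)=0$, conditioning gives
\[
\mathbb E\,U(f_{n+1},g_{n+1})\le \mathbb E\,U(f_n,g_n).
\]
Iterating and using $g_0=0$ together with (ii), we get $\mathbb E\,U(f_n,g_n)\le \mathbb E\,U(f_0,0)\le 0$. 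Combined with (i) this yields $\mu(|g_n|\ge 1)\le 2\|f_n\|_1$.

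\emph{Passage to the maximal function.} To reach $M(Tf)$, set $\tau=\inf\{n:|g_n|\ge 1\}$. The stopped process $g^\tau_n=g_{\tau\wedge n}$ is the transform of the stopped martingale $f^\tau$ with the multiplier $v_{k-1}\mathbf 1_{\{\tau\ge k\}}$. This multiplier is still adapted, because $\{\tau\ge k\}\in\mathcal F_{k-1}$, and it is still bounded by $1$. Applying the previous bound to this pair gives
\[
\mu\Big(\max_{k\le n}|g_k|\ge 1\Big)=\mu(|g_{\tau\wedge n}|\ge1)\le 2\|f_{\tau\wedge n}\|_1\le 2\|f_n\|_1 ,
\]
where the last inequality holds because $|f|$ is a submartingale. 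Letting $n\to\infty$ by monotone convergence gives $\mu(M(Tf)\ge1)\le 2\sup_n\|f_n\|_1$, which is the claim for $\lambda=1$. Here $M(Tf)=\sup_n|(Tf)_n|$, and Lemma~\ref{lem:conver} ensures $Tf$ is well defined.

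\emph{Main obstacle.} The real work is the concavity inequality (iii). Fix $x,y,\varepsilon$ and consider $\phi(t)=U(x+th,\,y+\varepsilon t h)$. I would try to show that $\phi$ is concave along every such line and that the proposed $(A,B)$ give a supporting line at $t=0$. Inside the diamond, $\phi$ is quadratic with leading coefficient $(\varepsilon^2-1)h^2\le 0$. Outside, $\phi$ is $1-2|x+th|$, which is concave. One must then check that the two pieces glue concavely across $|x|+|y|=1$. This uses $|\varepsilon|\le1$ and the inequality $|y|^2-|x|^2\le 1-2|x|$, which on $|x|+|y|\le 1$ reduces to $|y|\le 1-|x|$. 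If the global concavity argument becomes messy near the corners $x=0$ or $y=0$, I would instead verify (iii) directly by splitting into cases according to whether the endpoints $(x,y)$ and $(x+h,y+\varepsilon h)$ lie inside or outside the diamond.
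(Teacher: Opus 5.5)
The paper does not prove this lemma; it imports it as Theorem 1 of Burkholder's paper. Your argument is essentially Burkholder's special-function proof from that source, and it is correct apart from one small slip at the end.

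On the step you flagged as the main obstacle, your concavity plan works. At the point where the line enters or leaves the open diamond, $\phi$ agrees with the concave function $\psi(t)=1-2|x+th|$ on the outer side and lies below it on the inner side, because $U\le 1-2|x|$ everywhere. This forces the left derivative of $\phi$ to be at least its right derivative there. So the gluing needs only $U\le 1-2|x|$; the hypothesis $|\varepsilon|\le1$ is used only for the interior quadratic piece. Off the diamond, (iii) is immediate from
$U(x+h,y+k)\le 1-2|x+h|\le 1-2|x|-2\,\mathrm{sgn}(x)h$, for any convention $\mathrm{sgn}(0)\in[-1,1]$.

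The slip is in the last line. The increasing union of the events $\{\max_{k\le n}|g_k|\ge1\}$ is $\{\tau<\infty\}$, the event that $|g_n|\ge1$ for some $n$. This can be strictly smaller than $\{M(Tf)\ge1\}$ when the supremum equals $1$ without being attained, so monotone convergence does not give the stated inequality directly. The fix is routine. For $\lambda'<1$ you have $\{M(Tf)\ge1\}\subset\{\exists n:\ |g_n|\ge\lambda'\}$, so your bound at level $\lambda'$ gives $\mu(M(Tf)\ge1)\le 2\sup_n\|f_n\|_1/\lambda'$. Now let $\lambda'\uparrow1$.
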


As an immediate consequence of  Lemma \ref{lem:weak_inequality}, we obtain the following distributional estimate.

\begin{corollary}\label{c:weak 1 1} Let \(f\in L^1\) and \(Tf \) be \( f \)'s martingale transform with the multiplier \( v = (v_n)_{n \ge 0} \). If  \( v \) is uniformly bounded in absolute value by $1,$ then
\[
\lambda \mu(Tf \ge \lambda)\leq\lambda \mu(M(Tf) \ge \lambda) \le 2 \|f\|_1, \quad \lambda > 0.
\]
\end{corollary}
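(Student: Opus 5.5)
The statement is the distributional estimate of Corollary \ref{c:weak 1 1}, and it follows directly from Lemma \ref{lem:weak_inequality}. The plan has two parts. First, obtain the left inequality pointwise from the definition of \(M\). Second, get the right inequality by applying Lemma \ref{lem:weak_inequality} to the martingale \((f_n)_{n\ge0}\) generated by \(f\) and rewriting \(\sup_n\|f_n\|_1\) as \(\|f\|_1\).

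For the left inequality, write \(Tf=\lim_{n\to\infty}(Tf)_n\). This limit exists a.e. by Lemma \ref{lem:conver}, because \(Mv\le 1<\infty\) everywhere and \((f_n)\) is \(L^1\)-bounded. Since \(|(Tf)_n|\le M(Tf)\) for every \(n\), passing to the limit gives \(|Tf|\le M(Tf)\) a.e. Hence
\[
\{Tf\ge\lambda\}\subset\{|Tf|\ge\lambda\}\subset\{M(Tf)\ge\lambda\}
\]
up to a null set. Monotonicity of \(\mu\), multiplied by \(\lambda>0\), then yields the first inequality.

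For the right inequality, use the convention of Remark \ref{re: uniform}: \(f\in L^1\) means \(f=f_\infty\), with martingale \(f_n=\mathbb{E}(f\mid\mathcal{F}_n)\), i.e.\ \((f_n)\in L^1_u\subset L^1\). Conditional expectation is an \(L^1\) contraction, since \(|\mathbb{E}(f\mid\mathcal{F}_n)|\le\mathbb{E}(|f|\mid\mathcal{F}_n)\) and integrating gives \(\|f_n\|_1\le\|f\|_1\). Therefore \(\sup_n\|f_n\|_1\le\|f\|_1\). (Remark \ref{re: uniform} in fact gives equality for uniformly integrable martingales, but the inequality is enough.) So \((f_n)\) is an \(L^1\)-bounded martingale, and Lemma \ref{lem:weak_inequality} applies with the given multiplier, \(|v_n|\le1\):
\[
\lambda\mu(M(Tf)\ge\lambda)\le 2\sup_n\|f_n\|_1\le 2\|f\|_1 .
\]

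Nothing here is a genuine obstacle. The only point needing care is making sure \(Tf\) denotes the a.e. limit, so that the pointwise comparison with \(M(Tf)\) is legitimate. This is exactly what Lemma \ref{lem:conver} provides, since \(\{Mv<\infty\}=\Omega\).
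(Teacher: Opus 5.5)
Your proposal is correct and follows the paper's route. The paper states the corollary as an immediate consequence of Lemma~\ref{lem:weak_inequality}. Your argument just makes explicit the two steps the paper leaves implicit: that $|Tf|\le M(Tf)$ a.e., via the a.e.\ convergence from Lemma~\ref{lem:conver}, and that $\sup_n\|f_n\|_1\le\|f\|_1$ for $f_n=\mathbb{E}(f\mid\mathcal{F}_n)$.
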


\subsection{Stopping Times and Sparse Operators}
To further develop the localization techniques essential for martingale theory, we now present the cornerstone concept of stopping times, which play a pivotal role in optional sampling theorems and localization arguments and provide the foundation for formally defining sparse operators.

\begin{definition}\label{d:stopping} Denote the set of all nonnegative integers by $\mathbb{Z}^+$, and $\mathbb{Z}^+ \bigcup \{\infty\}$ by $\overline{\mathbb{Z}}^+$. A mapping $\tau$ from $\Omega$ to $\overline{\mathbb{Z}}^+$ is called a stopping time on $(\Omega, \mathcal{F}, \mu, \{\mathcal{F}_n\}_{n \ge 0})$, if $\{\omega : \tau(\omega) = n\} \in \mathcal{F}_n$, for all $n$, or equivalently $\{\omega : \tau(\omega) \leq n\} \in \mathcal{F}_n$, for all $n$.
\end{definition}

Building upon this, we invoke the associated $\sigma$-algebra at time $\tau$, which captures the information available up to this stopping time.

\begin{definition}\label{d:ahead stopping} 
Let $\tau$ be a stopping time on $(\Omega, \mathcal{F}, \mu, \{\mathcal{F}_n\}_{n \ge 0})$. The filtration at $\tau$, denoted $\mathcal{F}_\tau$, is the set of all events $F \in \mathcal{F}$ such that for every $n \geq 0$, the intersection $F \cap \{\tau \leq n\}$ belongs to $\mathcal{F}_n$. Formally:
\[
\mathcal{F}_\tau = \{ F \in \mathcal{F} : F \cap \{ \tau \leq n \} \in \mathcal{F}_n \text{ for all } n \}.
\]

\end{definition}

\begin{remark} \label{r:sigma} In the definition, $\{\tau \leq n\}$ could be replaced by $\{\tau = n\}$. And, $\mathcal{F}_T$ is a $\sigma$-field obviously. We think of $\mathcal{F}_n$ as the collection of all events observed up to time $n$, then $\mathcal{F}_T$ is the natural generalization of the $\sigma$-algebra 
$\mathcal{F}_n$ (information up to deterministic time $n$)
 to a random time $\tau$ and represents the information known (or events decidable) at the random time $\tau$. Notice that according to the definition, all subsets of $\{T = \infty\}$ belong to $\mathcal{F}_T$.
\end{remark}

We recall an elementary property of stopping times. In discrete time, the pointwise value \( f_\tau \) of a uniformly integrable martingale at a stopping time \( \tau \) is identically equal to the conditional expectation of $f$ with respect to the filtration up to \( \tau \). This result is fundamental in optimal stopping theory and stochastic control, bridging the deterministic evaluation of a process at a random time with its probabilistic expectation.

\begin{lemma}\label{T:stopping time}\cite[p.8]{MR1224450}
Let \( \tau\) be any stopping time, \( f \in L^1 \). Then \( f_\tau \) defined as \( f_{\tau(\omega)}(\omega) \), \( f_n = \mathbb E(f|\mathcal{F}_n) \), \( n \ge 0 \), \( f_\infty = f \), satisfies \(f_\tau= \mathbb E(f|\mathcal{F}_\tau)\) and \(\mathbb E(|f_\tau|) \leq \mathbb E(|f|).\)
\end{lemma}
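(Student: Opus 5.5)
The plan is to verify directly the two defining properties of the conditional expectation $\mathbb E(f|\mathcal{F}_\tau)$ for the candidate $f_\tau$: that $f_\tau$ is $\mathcal{F}_\tau$-measurable and integrable, and that it has the same integrals as $f$ over every set in $\mathcal{F}_\tau$. Uniqueness in the definition of conditional expectation then gives $f_\tau=\mathbb E(f|\mathcal{F}_\tau)$. Throughout, I decompose $\Omega$ into the disjoint pieces $\{\tau=n\}$, $n\in\mathbb{Z}^+$, and $\{\tau=\infty\}$. On each piece $f_\tau$ coincides with $f_n$ or with $f_\infty=f$, so
\[
f_\tau=\sum_{n\ge 0} f_n\mathbf 1_{\{\tau=n\}}+f\,\mathbf 1_{\{\tau=\infty\}}.
\]

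\emph{Step 1 (measurability).} Fix a Borel set $B$. For each finite $n$,
\[
\{f_\tau\in B\}\cap\{\tau=n\}=\{f_n\in B\}\cap\{\tau=n\}\in\mathcal{F}_n ,
\]
since $f_n$ is $\mathcal{F}_n$-measurable and $\tau$ is a stopping time. By Remark \ref{r:sigma} (where $\{\tau\le n\}$ may be replaced by $\{\tau=n\}$), this shows $\{f_\tau\in B\}\in\mathcal{F}_\tau$; the piece on $\{\tau=\infty\}$ is harmless, since the definition of $\mathcal{F}_\tau$ only tests finite $n$ (and $\mathcal{F}=\bigvee_n\mathcal{F}_n$ by (P4)).

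\emph{Step 2 (the $L^1$ bound).} Conditional Jensen gives $|f_n|\le \mathbb E(|f|\,|\mathcal{F}_n)$. Because $\{\tau=n\}\in\mathcal{F}_n$, we get
\[
\int_{\{\tau=n\}}|f_n|\,d\mu\le\int_{\{\tau=n\}}|f|\,d\mu .
\]
Summing over $n$ and adding $\int_{\{\tau=\infty\}}|f|\,d\mu$ yields $\mathbb E|f_\tau|\le\mathbb E|f|<\infty$, by monotone convergence. This is the second assertion, and it also supplies the integrability needed in Step 3.

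\emph{Step 3 (the averaging identity).} Let $F\in\mathcal{F}_\tau$. Then $F\cap\{\tau=n\}\in\mathcal{F}_n$, so the defining property of $f_n=\mathbb E(f|\mathcal{F}_n)$ gives
\[
\int_{F\cap\{\tau=n\}}f_n\,d\mu=\int_{F\cap\{\tau=n\}}f\,d\mu .
\]
Summing over $n$ and adding the term on $F\cap\{\tau=\infty\}$, where $f_\tau=f$ trivially, gives $\int_F f_\tau\,d\mu=\int_F f\,d\mu$.

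The only delicate point, which I expect to be the main (though mild) obstacle, is justifying the interchange of the countable sum with the integral in Step 3. This is handled by dominated convergence: the partial sums are dominated by $|f_\tau|$ on the $f_\tau$ side and by $|f|$ on the $f$ side, and both are integrable by Step 2. Combining Steps 1 and 3 with the uniqueness of the Radon--Nikod\'ym derivative completes the argument.
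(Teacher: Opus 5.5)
Your argument is correct and complete. Decomposing $\Omega$ into the sets $\{\tau=n\}$, $n\in\overline{\mathbb{Z}}^+$, and using $F\cap\{\tau=n\}\in\mathcal{F}_n$ for $F\in\mathcal{F}_\tau$ is the standard proof behind \cite{MR1224450}, which the paper cites instead of giving its own proof. The one difference from the usual presentation is cosmetic: you prove $\mathbb{E}|f_\tau|\le\mathbb{E}|f|$ directly first, which supplies the absolute convergence needed to sum the identities in Step 3. Usually that bound is read off afterwards from the $L^1$-contractivity of $\mathbb{E}(\cdot\,|\,\mathcal{F}_\tau)$.
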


The identity \( f_{\tau} = \mathbb{E}(f \mid \mathcal{F}_{\tau}) \) establishes a profound connection between pathwise behavior at stopping times and conditional expectations. This relationship underpins the development of more advanced operator estimates through stopping-time techniques. The systematic application of these methods yields sparse bounds for both uniformly bounded martingale transforms and Doob's maximal operator, with significant implications in weighted theory. 

To formalize the underlying structure enabling these sparse domination results, we introduce two key concepts: the locally filter-shifted stopping time, which provides the necessary technical framework, and conditional sparsity, which captures the essential structural property revealed through this approach. Together, these concepts form a coherent foundation for sparse operator theory in the martingale context.

\begin{definition}\label{def:locally filter-shifted}
Let $\tau$ be a stopping time on $(\Omega, \mathcal{F}, \mu, \{\mathcal{F}_n\}_{n\geq 0})$.

\begin{itemize}
\item If $\mu(\{\tau < \infty\}) > 0$, the local filtered probability space induced by $\tau$ is defined as the quadruple $(\widetilde{\Omega}, \widetilde{\mathcal{F}}, \widetilde{\mu}, \{\widetilde{\mathcal{F}}_n\}_{n\geq 0})$, where:
 \[
\widetilde{\Omega}=\{\tau < +\infty\},~\widetilde{\mathcal{F}} =\mathcal{F}|_{\{\tau < +\infty\}},~\widetilde{\mathcal{F}}_{n}=\mathcal{F}_{\tau+n}|_{\{\tau < +\infty\}},~\widetilde{\mu}= \frac{\mu|_{\{\tau < +\infty\}}}{\mu(\{\tau < +\infty\})}.
\]
  Any stopping time on this local space is called a locally filter-shifted stopping time relative to $\tau$.

\item If $\mu(\{\tau < \infty\}) = 0$, then the local filtered probability space induced by $\tau$ is defined to be the original space $(\Omega, \mathcal{F}, \mu, \{\mathcal{F}_n\}_{n\geq 0})$, and we define $\tau$ itself as the only locally filter-shifted stopping time relative to $\tau.$
\end{itemize}
\end{definition}

These two concepts allow us to characterize a probabilistic form of sparsity—termed conditional sparsity—which constitutes one of the central contributions of this work. Our framework employs an iterative definition beginning with Definition \ref{def:locally filter-shifted}. Its initial step sets $\tau \equiv 0$, which induces a local filtered probability space. This space is precisely the original filtered probability space itself, $(\Omega, \mathcal{F}, \mu, \{\mathcal{F}_n\}_{n\geq 0})$. Having established this base case, we now define the subsequent concept in Definition \ref{def:sparse_stopping_time}.

\begin{definition}\label{def:sparse_stopping_time}
Let $\{\tau^{(i)}\}_{i\geq 0}$ be a sequence of stopping times on the filtered probability spaces $(\Omega^{(i)}, \mathcal{F}^{(i)}, \mu^{(i)}, \{\mathcal{F}_n^{(i)}\}_{n\geq 0})$, where for  $i \geq 0$, $\tau^{(i+1)}$ is a locally filter-shifted stopping time relative to $\tau^{(i)}$, $\tau^{(0)}=0$ and  $(\Omega^{(0)}, \mathcal{F}^{(0)}, \mu^{(0)}, \{\mathcal{F}_n^{(0)}\}_{n\geq 0})=(\Omega, \mathcal{F}, \mu, \{\mathcal{F}_n\}_{n\geq 0}). $ The collection $\{\tau^{(i)}\}_{i\geq 0}$ is said to possess the property of conditional sparsity if there exists a constant $C\geq 1$ such that for all $i \geq 0$ we have
\[
 \mathbf{1}_{\{\tau^{(i)}<\infty\}}\leq C \mathbb{E}\left(\mathbf{1}_{\{\tau^{(i)}<\infty\} \setminus \{\tau^{(i+1)}<\infty\}} \mid \mathcal{F}^{(i)}_{\tau^{(i)}}\right)\mathbf{1}_{\{\tau^{(i)}<\infty\}}.
\]
\end{definition}

The property ensures that the stopping times are sufficiently sparse in a conditional sense, with the sets $\{\tau^{(i)}<\infty\} \setminus \{\tau^{(i+1)}<\infty\}$ forming an essential partition of the probability space. Intuitively, it guarantees that at each stopping index \(i\), the conditional expectation of the indicator of \(\{\tau^{(i)} < \infty\} \setminus \{\tau^{(i+1)} < \infty\}\) (the set where stopping occurs at \(\tau^{(i)}\) but not at \(\tau^{(i+1)}\)) is bounded below by a fixed fraction of the indicator of the event \(\{\tau^{(i)} < \infty\}\). This technical condition plays a crucial role in establishing the boundedness properties of the operators defined below. For convenience, we denote $E_i = \{\tau^{(i)} < \infty\}$ and $S_i=\{\tau^{(i)}<\infty\}\setminus \{\tau^{(i+1)}<\infty\}.$

\begin{definition}\label{def:sparse}
An operator \( S \) is called sparse if it admits the representation
\[
S|f| = \sum_{i=0}^{\infty}\mathbb{E}\left( |f|\mid \mathcal{F}^{(i)}_{\tau^{(i)}}\right)\mathbf{1}_{E_i},
\]
where $f\in L^1$ and  the collection $\{\tau^{(i)}\}_{i\geq 0}$ possesses the property of conditional sparsity.
\end{definition}

The representation integrates conditional expectations of \(|f|\) over the sequence of stopping times \(\{\tau^{(i)}\}_{i\geq 0}\). For each stopping time \(\tau^{(i)}\), the operator takes the conditional expectation of \(|f|\) with respect to the \(\sigma\)-algebra \(\mathcal{F}_{\tau^{(i)}}^{(i)}\) and multiplies it by the indicator of the event \(E_i\). The conditional sparsity property ensures that this infinite sum is analytically tractable.

\begin{definition}\label{def:simple_sparse}
An operator \( \mathbb{S} \) is called simple sparse if it admits the representation 
\[
\mathbb{S}|f| = \sum_{i=0}^{\infty}\mathbb{E}\left( |f|\mid \mathcal{F}^{(i)}_{\tau^{(i)}}\right)\mathbf{1}_{S_i},
\]
where $f\in L^1$ and  the collection $\{\tau^{(i)}\}_{i\geq 0}$ possesses the property of conditional sparsity.
\end{definition}

The fundamental advantage of this representation stems from the mutual disjointness of the sets $\{S_i\}_{i\geq 0}$. This disjointness ensures that for any $p \geq 1$, the $p$-th power of the sum equals the sum of the $p$-th powers without cross terms, which dramatically simplifies $L^p$-norm estimates.

\subsection{Weights} 
Let $(\Omega, \mathcal{F}, \mu, \{\mathcal{F}_n\}_{n \ge 0})$ be a filtered probability space. 
A weight $w$ is a positive random variable in $L^1.$ If \(A \in \mathcal{F}\), we denote $\int_A d\mu$ and $\int_A w d\mu$ by $\mu(A)$ and  $w(A),$
respectively.
For \(p > 1\), a martingale \(f = (f_n) \in L^p (w)\) is meant as \(f_n = \mathbb E(f | \mathcal{F}_n)\), \(f \in L^p (w)\). Without loss of generality, we normalize $w$ so that $w(\Omega) = 1$; otherwise, one may consider $w/w(\Omega)$.

Consider the weighted probability space $(\Omega, \mathcal{F}, wd\mu)$ and the corresponding filtered space $(\Omega, \mathcal{F}, wd\mu, \{\mathcal{F}_n\}_{n\geq 0})$. For a stopping time $\tau$, the conditional expectation with respect to $\mathcal{F}_{\tau}$ under the measure $wd\mu$ is well-defined and denoted by $\mathbb{E}_w(\cdot \mid \mathcal{F}_{\tau})$. 

To streamline subsequent expressions, we use the following notation
\begin{equation}\label{weighted_expectation}
\mathbbm{E} [\,\cdot \; w ]
  =\int_{\Omega}\cdot wd\mu,
   \end{equation}
and  for any stopping time $\tau$, \begin{equation}\label{weighted_sampling}
   \cdot_{\,\tau, w} =\mathbbm{E}_w [\, \cdot \;|\,
\mathcal{F}_{\tau}].
  \end{equation}
  
A fundamental relationship between conditional expectations under the original and weighted measures is given as follows. Take $A \in \mathcal{F}_{\tau}$ and $f \in L^1(\mu).$ Then
\[
\begin{aligned}
\int_A f_{\tau}d\mu &= \int_A f d\mu = \int_A f w^{-1} w d\mu = \int_A \mathbb{E}_w(fw^{-1} | \mathcal{F}_{\tau}) wd\mu\\
&= \int_A \mathbb{E}_w(fw^{-1} | \mathcal{F}_{\tau}) w_{\tau} d\mu.
\end{aligned}
\]
Since $A$ is  arbitrary in $\mathcal{F}_{\tau},$ it follows that $\mathbb{E}_w(fw^{-1} | \mathcal{F}_{\tau}) w_{\tau}=f_{\tau}.$

We begin by reviewing the foundational concept of  the $A_p$ weight and formalizing the notion of mixed weights.

\begin{definition}\footnote{Within weighted martingale theory, the integrability conditions on $w$ and $\sigma$ are standard; see \cite[p.~248]{MR1224450}.}
Let $w$ be a weight and let \( 1 < p < \infty \). Suppose $\sigma=w^{-\frac{1}{p-1}} \in L^1.$ For $n\in \mathbb N,$ we define
\[A_{n,p}(w)= \left\| w_{n} \sigma^{p - 1}_{n} \right\|_{\infty}.\]
We define the $A_p$ characteristic of the weight $w$ by
\[ [w]_{A_p} = \sup_{n\geq0}A_{n,p}(w).\] 
If $[w]_{A_p}$ is finite, then we say $w \in A_p.$
\end{definition}

\begin{definition}
Let $\alpha,~\beta\geq0$ and $\nu\geq1.$
For $\nu+1\leq p\leq r<+\infty,$
we define the $A_p^{\alpha}A_r^{\beta}$ characteristic of the weight $w$ by
\[ [w]_{A_p^{\alpha}A_r^{\beta}} = \sup_{n\geq0}(A_{n,p}(w))^{\alpha} (A_{n,r}(w))^{\beta}.\]
If $[w]_{A_p^{\alpha}A_r^{\beta}}$ is finite, then we say $w \in A_p^{\alpha}A_r^{\beta}$.
\end{definition}

\begin{remark}\label{rk:Hereditarily}Let $\tau$ be a stopping time on $(\Omega, \mathcal{F}, \mu, \{\mathcal{F}_n\}_{n \ge 0}).$ If $w\in A_p,$ then we define
\[A_{\tau,p}(w)= \left\| w_{\tau} \sigma^{p - 1}_{\tau} \right\|_{\infty}.\] Because 
\begin{eqnarray*}
w_{\tau} \sigma^{p - 1}_{\tau}&=&w \sigma^{p - 1}\mathbf 1_{\{\tau=\infty\}}+\sum\limits_{n=0}^{\infty}w_{\tau} \sigma^{p - 1}_{\tau}\mathbf 1_{\{\tau=n\}}\\
&=&\lim_{n\rightarrow\infty}w_n \sigma_n^{p - 1}\mathbf 1_{\{\tau=\infty\}}+\sum\limits_{n=0}^{\infty}w_{n} \sigma^{p - 1}_{n}\mathbf 1_{\{\tau=n\}}\\
&\leq&[w]_{A_p},
\end{eqnarray*}
we have $A_{\tau,p}(w)\leq[w]_{A_p}.$ Let $w \in A_p^{\alpha}A_r^{\beta}$. Similarly, we have $(A_{\tau,p}(w))^{\alpha} (A_{\tau,r}(w))^{\beta}\leq[w]_{A_p^{\alpha}A_r^{\beta}}.$
\end{remark}

Next, we recall the definitions of the two-weight conditions $S_p^*$ and $B_p.$
\begin{definition}\label{definition Sp}Let $u \hbox{ and }w$ be weights and $1<p<\infty.$
Denote $\sigma=w^{-\frac{1}{p-1}}\in L^1.$
We say that the couple of weights $(u,w)$
satisfies the condition $S^*_p,$ if
      \begin{equation}\label{bi-SP}
[u,w]_{S^*_p}=\sup\limits_{n\in \mathbb N}
\Big(\frac{\int_E {^*M_n}(\sigma)^pud\mu}{\sigma({E})}\Big)^{\frac{1}{p}}<\infty.
      \end{equation}
\end{definition}

\begin{definition}\label{definition Bp}Let $u\hbox{ and }w$ be weights and $1<p<\infty.$
Denote that $\sigma=w^{-\frac{1}{p-1}}\in L^1.$
We say that the couple of weights $(u,w)$
satisfies the condition $B_p,$ if
there exists a positive constant $C$ such that for all $n\in \mathbb N$ we have
      \begin{equation}\label{bi-BP}\mathbb E_n(u)\mathbb E_n(\sigma)^p\leq
C\exp\big(\mathbb E_n(\log\sigma)\big).
      \end{equation}
We denote by $[u,w]_{B_p}$
the smallest constant $C$ in \eqref{bi-BP}.
\end{definition}

We conclude by recalling the $[w]_{A^*_{\infty}}$ condition and formally defining $(A_{p'})^{\frac{1}{p'}}(A^*_{\infty})^{\frac{1}{p}}.$

\begin{definition}\label{definition Wp}Let $w$ be a weight.
We say that the weight $w$
satisfies the condition $A^*_{\infty},$ if
there exists a positive constant $C$ such that for all $n\in \mathbb N$ we have
      \begin{equation}\label{bi-WP}
         \mathbb E_n({^*M_nw})
\leq C w_n.
      \end{equation}
We denote by $[w]_{A^*_{\infty}}$
the smallest constant $C$ in \eqref{bi-WP}.
\end{definition}

\begin{remark}\label{rem1}
We summarize basic properties about the conditions.
Let $w\in A_p$ and $\sigma=w^{1-p'}.$ Then
\begin{enumerate}
  \item $\sigma\in A_{p'}$ and $[\sigma]^{\frac{1}{p'}}_{A_{p'}}=[w]^{\frac{1}{p}}_{A_p};$
  \item \cite[Theorem1.8] {MR4745061} $w\in A^*_{\infty}$ and $[w]_{A^*_{\infty}}\lesssim[w]_{A_p}.$
\end{enumerate}
\end{remark}

\begin{definition}\label{df:mixed_Ap_infty}Let $w$ be a weight and let \( 1 < p < \infty \). Suppose $\sigma=w^{-\frac{1}{p-1}} \in L^1.$ 
We define the mixed condition $(A_{p'})^{\frac{1}{p'}}(A^*_{\infty})^{\frac{1}{p}}$ by
      \begin{equation}\label{mixcon1}
      [\sigma]_{(A_{p'})^{\frac{1}{p'}}(A^*_{\infty})^{\frac{1}{p}}}=
      \sup\limits_{n\in \mathbb N}\Big(\sigma_n ^{p-1} w_n\cdot(\frac{{^*M_n\sigma}}{\sigma_n})_{n}\Big)^{\frac{1}{p}}.
      \end{equation}
\end{definition}

\section{Uniformly Bounded Martingale Transforms}\label{sec:mar_trans}
This section establishes sparse domination for uniformly bounded martingale transforms through mathematical induction. The inductive construction progressively builds locally filter-shifted stopping times, yielding a family of stopping times that possesses the property of conditional sparsity and generates the desired sparse operators.

\subsection{Sparse Domination for Uniformly Bounded Martingale Transforms}\label{subsec:mar_trans}
In the subsequent proof, stopping times emerge as the fundamental elements of sparse theory. The sparse operators constructed through these stopping times derive their sparse character by virtue of conditional expectations.

\begin{lemma}\label{lem:essential}Let $(\Omega, \mathcal{F}, \mu, \{\mathcal{F}_n\}_{n \ge 0})$ be a filtered probability space. There is a constant \( C > 0 \) so that for all  \( f \in L^1 \) and uniformly bounded martingale transforms \( T \) corresponding to a multiplier sequence \( v = (v_n)_{n \geq 0} \) with \( \sup_n\|v_n\|_{\infty} \leq 1 \), there is a stopping time $\tau$ on $(\Omega, \mathcal{F}, \mu, \{\mathcal{F}_n\}_{n \ge 0})$ such that 
\begin{equation}\label{eq:mh_begin}
|Tf|\leq 12|f|_0+|f|_{\tau}\mathbf 1_{\{\tau<\infty\}}+|T^{\tau}f|,
\end{equation}
where $T^{\tau}f=(\sum\limits_{n=\tau+1}^{\infty}v_{n-1} \Delta_n f) \mathbf 1_{\{\tau<\infty\}}.$
Furthermore, $\tau$ has the following properties
\begin{itemize}
    \item[(\ref{lem:essential}a)]  \label{qasqq}$\mu(\tau<\infty)\leq \frac{1}{2}\mu(\Omega).$
    \item[(\ref{lem:essential}b)]  For all $A \subset \Omega, A \in \mathcal{F}_0$ there holds
$\mathbf 1_{A}\leq 2\mathbb E(\mathbf 1_{E(A)}|\mathcal{F}_{0})\mathbf 1_{A},$
where $E(A)=A\backslash \{\tau<\infty\}=A\cap\{\tau=\infty\}.$
\end{itemize}
\end{lemma}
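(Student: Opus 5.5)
We define $\tau$ by the Lacey-type stopping rule
\[
\tau=\inf\Big\{n\ge 0:\ |f|_n> 4|f|_0\ \text{ or }\ \max_{0\le k\le n}|(Tf)_k|> 8|f|_0\Big\},
\]
with $\inf\emptyset=\infty$. Here $|f|_n=\mathbb E(|f|\mid\mathcal F_n)$ and $|f|_0=\mathbb E(|f|\mid\mathcal F_0)$. Since $|f|_n$ and $(Tf)_k$ are adapted, $\tau$ is a stopping time.

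\textbf{Pointwise bound.} On $\{\tau=\infty\}$ we have $|(Tf)_n|\le 8|f|_0$ for all $n$, so $|Tf|\le 8|f|_0$ a.e. On $\{\tau<\infty\}$ we split
\[
Tf=(Tf)_{\tau-1}+v_{\tau-1}\Delta_\tau f+T^\tau f .
\]
By minimality of $\tau$, $|(Tf)_{\tau-1}|\le 8|f|_0$. For the jump term, $|\Delta_\tau f|\le |f_\tau|+|f_{\tau-1}|\le |f|_\tau+|f|_{\tau-1}$, and $|f|_{\tau-1}\le 4|f|_0$ again by minimality. If $\tau=0$, then $(Tf)_{-1}=0$, $(Tf)_0=0$, and the only trigger is $|f|_0>4|f|_0$, which is impossible unless $|f|_0=0$; so this case is harmless. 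Collecting terms gives
\[
|Tf|\le 12|f|_0+|f|_\tau\mathbf 1_{\{\tau<\infty\}}+|T^\tau f|,
\]
which is \eqref{eq:mh_begin}. The only care needed is convergence of $T^\tau f$, which follows from Lemma \ref{lem:conver}.

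\textbf{Property (b).} Fix $A\in\mathcal F_0$ and localise further to $B=A\cap\{a<|f|_0\le 2a\}$, or argue conditionally on $\mathcal F_0$. We need the conditional weak-type estimates
\[
\mathbb E\big(\mathbf 1_{\{\sup_n|f|_n>4|f|_0\}}\mid\mathcal F_0\big)\le \tfrac14
\quad\text{and}\quad
\mathbb E\big(\mathbf 1_{\{M(Tf)>8|f|_0\}}\mid\mathcal F_0\big)\le \tfrac14 .
\]
To get them, apply Lemma \ref{lem: weak} and Lemma \ref{lem:weak_inequality} to the martingale $f\mathbf 1_B$. This is legitimate because $B\in\mathcal F_0$, so $T(f\mathbf 1_B)=\mathbf 1_B Tf$ and $(|f|\mathbf 1_B)_n=\mathbf 1_B|f|_n$. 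On $B$ the thresholds $4|f|_0$ and $8|f|_0$ exceed $4a$ and $8a$, so
\[
\mu(B\cap\{\tau<\infty\})\le \frac{\|f\mathbf 1_B\|_1}{4a}+\frac{2\|f\mathbf 1_B\|_1}{8a}\le \frac{2a\mu(B)}{4a}+\frac{2\cdot 2a\,\mu(B)}{8a},
\]
which is only $\le \mu(B)$. The constants must therefore be tightened: use a finer dyadic layer $B=\{a<|f|_0\le (1+\epsilon)a\}$, or enlarge the thresholds. The cleanest route is to replace dyadic layers by arbitrary $\mathcal F_0$-sets $G$ and use $\|f\mathbf 1_G\|_1=\int_G|f|_0$. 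To make this sharp, rescale by defining the stopping rule for $g=f/|f|_0$ on $\{|f|_0>0\}$. Since $|f|_0$ is $\mathcal F_0$-measurable, $Tg=Tf/|f|_0$, and $\mathbb E(|g|\mathbf 1_G)=\mu(G)$. The weak-type lemmas then give
\[
\mu(G\cap\{\tau<\infty\})\le \big(\tfrac14+\tfrac{2}{8}\big)\mu(G)=\tfrac12\mu(G)
\]
for every $G\in\mathcal F_0$. This is exactly $\mathbb E(\mathbf 1_{\{\tau<\infty\}}\mid\mathcal F_0)\le\frac12$, hence
\[
\mathbb E(\mathbf 1_{A\cap\{\tau=\infty\}}\mid\mathcal F_0)\ge \tfrac12\mathbf 1_A,
\]
which is (b). On $\{|f|_0=0\}$ we have $f=0$ a.e. there, so $\tau=\infty$.

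\textbf{Property (a)} then follows by taking $A=\Omega$ and integrating.

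The main obstacle is the normalization step: the weak-type inequalities are global, while property (b) requires a conditional (on $\mathcal F_0$) version. Dividing by the $\mathcal F_0$-measurable $|f|_0$ and using that $T$ commutes with multiplication by $\mathcal F_0$-measurable bounded functions (truncating $1/|f|_0$ on $\{|f|_0>\delta\}$ and letting $\delta\to0$) handles this. The thresholds $4$ and $8$ are chosen to give constant $\frac12$, and they yield the constant $12=8+4$ in \eqref{eq:mh_begin}.
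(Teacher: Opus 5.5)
Your proof is correct and follows the paper's argument. Both stop $\tau$ at the first time the martingale or $(Tf)_n$ crosses a multiple of $|f|_0$. Both normalize by the $\mathcal F_0$-measurable $|f|_0$, so that the weak-type $(1,1)$ bounds give $\mu(G\cap\{\tau<\infty\})\le\frac12\mu(G)$ for every $G\in\mathcal F_0$; the paper abbreviates this step as ``repeating the proof with $\Omega$ replaced by $A$''. Both then use the split $Tf=(Tf)_{\tau-1}+v_{\tau-1}\Delta_\tau f+T^\tau f$. The differences are cosmetic: you stop on $|f|_n=\mathbb E(|f|\mid\mathcal F_n)$ with thresholds $4$ and $8$, while the paper stops on $|f_n|$ and $|(Tf)_n|$ with the common threshold $6$. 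Both choices give the measure bound $\frac12$ (as $\frac14+\frac14$, resp.\ $\frac16+\frac26$) and the constant $12$.
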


\begin{proof}[Proof of Lemma \ref{lem:essential}] The case $\|f\|_1=0$ is trivial with $\tau\equiv\infty$ and $T^{\tau}f=0.$
Let $\|f\|_1>0$. For a uniformly integral martingale
 $f = (f_n)_{n \ge 0}$ on $(\Omega, \mathcal{F}, \mu, \{\mathcal{F}_n\}_{n \ge 0})$ with $f_n=\mathbb E(f|\mathcal{F}_n),$ we
set
\begin{equation}
\label{eq:set_definition}
E = \{\max\{Mf, M(Tf)\} > C|f|_0\},
\end{equation}
where the constant \( C \) to be determined later. 
Let \begin{equation}\label{df:tau}\tau=\inf\{n:|f_n|\vee |(Tf)_n|>C|f|_0\}.\end{equation} 
Then $E=\{\tau<\infty\}.$ 
Denoting $\Omega_{0}=\{|f|_0=0\}$ and $\Omega_{1}=\{|f|_0\neq0\},$ we write 
\[\widetilde{\Omega}=\Omega_1,~\widetilde{\mathcal{F}} =\mathcal{F}|_{\Omega_1},~\widetilde{\mathcal{F}_n} =\mathcal{F}_n|_{\Omega_1},~\widetilde{\mu}= \frac{\mu|_{\Omega_{1}}}{\mu(\Omega_{1})}\] 
and \[\widetilde{f_n} =\frac{f_n|_{\Omega_1}}{|f|_0|_{\Omega_1}},~n\geq 0.\]
 It follows that $(\widetilde{f_n})_{n\geq 0}$ is a uniformly integral martingale on $(\widetilde{\Omega}, \widetilde{\mathcal{F}}, \widetilde{\mu}, 
\{\widetilde{\mathcal{F}_n}\}_{n \ge 0})$ and \[\lim_{n\rightarrow+\infty}\widetilde{f_n}=\frac{f|_{\Omega_1}}{|f|_0|_{\Omega_1}},~a.e.\] Here, $\frac{f|_{\Omega_1}}{|f|_0|_{\Omega_1}}$ is denoted by $\widetilde{f}.$
Recalling Remark \ref{re: uniform}, we have $\widetilde{f}=(\widetilde{f_n})_{n\geq 0}.$ 
Letting $\widetilde{\tau}=\inf\{n:|\widetilde{f_n}|\vee |(T\widetilde{f})_n|>C\},$ we deduce that
$\tau|_{\Omega_1}=\widetilde{\tau}$ and 
\[
E= \left\{ \max\{ M \widetilde{f}, M(T\widetilde{f })\} > C\right\}=\left\{M \widetilde{f}>C\right\}\cup\left\{M(T \widetilde{f})>C\right\}.
\]
Thus
\[
\begin{aligned}
\widetilde{\mu}(E)
&\leq \widetilde{\mu}(\left\{M \widetilde{f}>C\right\})+\widetilde{\mu}(\left\{M(T \widetilde{f})>C\right\}) \\
&\leq\frac{ 3}{C} \| \widetilde{f}\|_{L^1(\widetilde{\Omega}, \widetilde{\mathcal{F}}, \widetilde{\mu})}=\frac{ 3}{C},
\end{aligned}
\]
where we have used Lemma \ref{lem: weak} and  Corollary \ref{c:weak 1 1}.  
Setting $C=6,$ we get $\widetilde{\mu}(E)\leq \frac{1}{2}$.

The case $\widetilde{\mu}(E)=0$ is trivial with $\tau\equiv\infty$ and $T^{\tau}f=0.$ 

We prove the case $0<\widetilde{\mu}(E)$ as follows.  Because $\widetilde{\mu}(E)\leq \frac{1}{2},$ we have
$\mu(E)\leq\frac{1}{2}\mu(\Omega_{1})\leq\frac{1}{2}\mu(\Omega).$  Then $\mu(\tau<\infty)\leq \frac{1}{2}\mu(\Omega),$
which is (\ref{lem:essential}a) and equivalent to 
\[\mu(\Omega)\leq2\mu(\Omega\backslash\{\tau<\infty\})=2\mu(\Omega\cap\{\tau=\infty\}).\]

Let $A\in \mathcal{F}_0$.
Denoting $E(A)=A\backslash \{\tau<\infty\}=A\cap\{\tau=\infty\},$ we have
$$  \mu(A) \leqslant  2 \mu(E(A)),$$
by repeating the above proof and replacing $\Omega$ by $A.$
For any $A$ and $A_1\in \mathcal{F}_0,$ we get  
\begin{eqnarray*}
\int_{A_1}\mathbf 1_{A}d\mu&=& \mu(A_1\cap A)\\
&\leq&2 \mu\big(E(A_1\cap A)\big)\\
&=&2 \mu\big(A_1\cap A\cap\{\tau=\infty\}\big)\\
&=&2 \mu\big(A_1\cap E(A)\big)\\
&=&2\int_{A_1}\mathbf 1_{E(A)}d\mu\\
&=&2\int_{A_1}\mathbb E(\mathbf 1_{E(A)}|\mathcal{F}_0)d\mu.
\end{eqnarray*}
Since $A_1$ is arbitrary, we obtain $\mathbf 1_{A}\leq 2\mathbb E(\mathbf 1_{E(A)}|\mathcal{F}_0)\mathbf 1_{A}.$ This is (\ref{lem:essential}b).

For the above $\tau,$ we claim that
\[|Tf|\leq 12|f|_0+|f|_{\tau}\mathbf 1_{\{\tau<\infty\}}+|T^{\tau}f|,\]
where $T^{\tau}f=(\sum\limits_{n=\tau+1}^{\infty}v_{n-1} \Delta_n f) \mathbf 1_{\{\tau<\infty\}}.$ 

Let $x\in \Omega\setminus E= \{\tau=\infty\},$ then $Mf(x)\vee M(Tf)(x)\leq6|f|_0(x).$ It follows that
\begin{equation}\label{eq:setminus}
|Tf|\mathbf 1_{\Omega\setminus E}\leq6|f|_0\mathbf 1_{\Omega\setminus E}.
\end{equation}

Let $x\in E.$ Then $x\in \{\tau<\infty\}.$ Suppose $\tau(x)=n_0.$ It follows that
\[
\begin{aligned}
&|Tf|\mathbf 1_{\{\tau(x)=n_0\}}\\
&=|\sum\limits_{n=1}^{n_0}v_{n-1} \Delta_n f+\sum\limits_{n=n_0+1}^{\infty}v_{n-1} \Delta_n f|\mathbf 1_{\{\tau(x)=n_0\}}\\
&=|\sum\limits_{n=1}^{n_0-1}v_{n-1} \Delta_n f-v_{n_0-1}  f _{n_0-1}+v_{n_0-1} f _{n_0}+\sum\limits_{n=n_0+1}^{\infty}v_{n-1} \Delta_n f|\mathbf 1_{\{\tau(x)=n_0\}}\\
&\leq|\sum\limits_{n=1}^{n_0-1}v_{n-1} \Delta_n f|\mathbf 1_{\{\tau(x)=n_0\}}+|v_{n_0-1}  f _{n_0-1}|\mathbf 1_{\{\tau_0(x)=n_0\}}+\\&\quad+|v_{n_0-1} f _{n_0}+\sum\limits_{n=n_0+1}^{\infty}v_{n-1} \Delta_n f|\mathbf 1_{\{\tau(x)=n_0\}}.
\end{aligned}
\]
From the definition of $\tau,$ we have $$|\sum\limits_{n=1}^{n_0-1}v_{n-1}(x) \Delta_n f(x)|\mathbf 1_{\{\tau(x)=n_0\}}\leq6|f|_0(x)\mathbf 1_{\{\tau(x)=n_0\}}$$ and $$|v_{n_0-1}(x)  f _{n_0-1}(x)|\mathbf 1_{\{\tau(x)=n_0\}}\leq6|f|_0(x)\mathbf 1_{\{\tau(x)=n_0\}}.$$ Thus
\[
\begin{aligned}
|Tf(x)|\mathbf 1_{E}&=12|f|_0(x)\mathbf 1_{E}+|v_{\tau-1}(x) f _{\tau}(x)+\sum\limits_{n=\tau+1}^{\infty}v_{n-1}(x) \Delta_n f(x)|\mathbf 1_{E}.
\end{aligned}
\]
Combining this with \eqref{eq:setminus}, we derive
\[|Tf|\leq 12|f|_0+|f|_{\tau}\mathbf 1_{\{\tau<\infty\}}+|T^{\tau}f|,\]
where $T^{\tau}f=(\sum\limits_{n=\tau+1}^{\infty}v_{n-1} \Delta_n f) \mathbf 1_{\{\tau<\infty\}}.$ 
\end{proof}

\begin{lemma}\label{lem:induce}Let $l\in \mathbb {N}.$ For all  \( f \in L^1(\mu) \) and uniformly bounded martingale transforms \( T \) corresponding to a multiplier sequence \( v = (v_n)_{n \geq 0} \) with \( \sup_n\|v_n\|_{\infty} \leq 1 \), there are stopping times $\tau^{(m)}$ on filtered probability spaces $(\Omega^{(m)}, \mathcal{F}^{(m)}, \mu^{(m)}, \{\mathcal{F}^{(m)}_n\}_{n \ge 0})$ such that 
\begin{equation}\label{eq:lem_induce_sparse}
|Tf|\leq 12|f|_0+14\sum\limits_{m=1}^{l-1}|f|_{\tau^{(m)}}\mathbf 1_{\{\tau^{(m)}<\infty\}}+|f|_{\tau^{(l)}}\mathbf 1_{\{\tau^{(l)}<\infty\}}+|T^{\tau^{(l)}}f|,
\end{equation}
where $T^{\tau^{(l)}}f=(\sum\limits_{n=\tau^{(l)}+1}^{\infty}v_{n-1} \Delta_n f) \mathbf 1_{\{\tau^{(l)}<\infty\}}.$ 
Furthermore, $\tau^{(m)}$ has the following properties
\begin{equation}\label{eq:sparse}
\mathbf 1_{\{\tau^{(m-1)}<\infty\}}\leq 2\mathbb E(\mathbf 1_{\{\tau^{(m-1)}<\infty\}\backslash\{\tau^{(m)}<\infty\}}|\mathcal{F}_{\tau^{(m-1)}})\mathbf 1_{\{\tau^{(m-1)}<\infty\}},
\end{equation}
where $\tau^{(0)}$ is understood as $0.$
\end{lemma}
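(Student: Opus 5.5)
The plan is to argue by induction on $l$, with Lemma \ref{lem:essential} supplying both the base case and the inductive step. For $l=1$, Lemma \ref{lem:essential} applied to $f$ and $T$ on the original space gives $\tau^{(1)}=\tau$ satisfying \eqref{eq:mh_begin}, which is exactly \eqref{eq:lem_induce_sparse} with an empty middle sum. Property \eqref{eq:sparse} for $m=1$ (with $\tau^{(0)}=0$, so $\{\tau^{(0)}<\infty\}=\Omega$ and $\mathcal F_{\tau^{(0)}}=\mathcal F_0$) follows from (\ref{lem:essential}b) with $A=\Omega$.

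For the step $l\to l+1$, we only need to handle the tail term $|T^{\tau^{(l)}}f|$. If $\mu(\tau^{(l)}<\infty)=0$, this term vanishes and we set $\tau^{(l+1)}=\tau^{(l)}$, following the convention of Definition \ref{def:locally filter-shifted}. Otherwise we work on the local filtered space induced by $\tau^{(l)}$: $\widetilde\Omega=\{\tau^{(l)}<\infty\}$ with $\widetilde{\mathcal F}_n=\mathcal F_{\tau^{(l)}+n}|_{\widetilde\Omega}$. On this space we consider the martingale $g_n=f_{\tau^{(l)}+n}$, which is uniformly integrable with limit $f$ restricted to $\widetilde\Omega$, and the shifted multipliers $\tilde v_n=v_{\tau^{(l)}+n}$, which are adapted to $\widetilde{\mathcal F}_n$ and bounded by $1$. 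With the corresponding transform $\widetilde T$ we have $\widetilde T g=\sum_{n\ge1}\tilde v_{n-1}\Delta_n g=T^{\tau^{(l)}}f$ on $\widetilde\Omega$, and $|g|_0=\mathbb E(|f|\mid\mathcal F_{\tau^{(l)}})=|f|_{\tau^{(l)}}$ by Lemma \ref{T:stopping time}.

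Applying Lemma \ref{lem:essential} on $\widetilde\Omega$ to $g$ gives a stopping time $\tilde\tau$, which we take as $\tau^{(l+1)}$ (a locally filter-shifted stopping time relative to $\tau^{(l)}$). It satisfies
\[
|T^{\tau^{(l)}}f|\le 12|f|_{\tau^{(l)}}\mathbf 1_{\{\tau^{(l)}<\infty\}}+|g|_{\tilde\tau}\mathbf 1_{\{\tilde\tau<\infty\}}+|\widetilde T^{\tilde\tau}g|.
\]
Here $|g|_{\tilde\tau}$ is the conditional expectation of $|f|$ given $\widetilde{\mathcal F}_{\tilde\tau}$, which we write as $|f|_{\tau^{(l+1)}}$, and $\widetilde T^{\tilde\tau}g$ is by definition $T^{\tau^{(l+1)}}f$, the sum over indices beyond $\tau^{(l)}+\tilde\tau$. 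Substituting this into the level-$l$ inequality, the coefficient of $|f|_{\tau^{(l)}}\mathbf 1_{\{\tau^{(l)}<\infty\}}$ becomes $1+12\le 14$, which gives \eqref{eq:lem_induce_sparse} at level $l+1$. Property \eqref{eq:sparse} for $m=l+1$ comes from (\ref{lem:essential}b) with $A=\widetilde\Omega$ on the local space. Because $\widetilde\mu$ is a normalized restriction of $\mu$, conditional expectations with respect to $\widetilde{\mathcal F}_0=\mathcal F_{\tau^{(l)}}|_{\widetilde\Omega}$ coincide on $\widetilde\Omega$ with $\mathbb E(\cdot\mid\mathcal F_{\tau^{(l)}})$, and \eqref{eq:sparse} follows.

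The main obstacle is the bookkeeping needed to make the shifted objects rigorous. First, $\widetilde{\mathcal F}_n=\mathcal F_{\tau^{(l)}+n}$ must form a filtration, which holds because $\tau^{(l)}+n$ are increasing stopping times. Second, $g$ must be a martingale with respect to it, which follows from the optional sampling identity $\mathbb E(f\mid\mathcal F_{\tau^{(l)}+n})=f_{\tau^{(l)}+n}$ in Lemma \ref{T:stopping time}. Third, the increments of $g$ must agree pathwise with $\Delta_{\tau^{(l)}+n}f$ on $\widetilde\Omega$, so that the tail sum is exactly a bounded martingale transform there and Lemma \ref{lem:weak_inequality} applies inside Lemma \ref{lem:essential}. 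A remaining notational point is that $\mathcal F_{\tau^{(m)}}$ should be read as $\mathcal F^{(m)}_{\tau^{(m)}}$, the $\sigma$-algebra on the nested local spaces; this is consistent with Definition \ref{def:sparse_stopping_time}.
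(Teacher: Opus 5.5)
Your proposal is correct and takes essentially the same route as the paper. Both argue by induction on $l$: apply Lemma \ref{lem:essential} on the local filtered space induced by $\tau^{(l)}$, set $\tau^{(l+1)}=\tau^{(l)}$ in the null case, absorb the coefficient $1+12\le 14$, and read the sparsity property off (\ref{lem:essential}b). Your explicit check of the shifted bookkeeping fills in what the paper leaves implicit: $g_n=f_{\tau^{(l)}+n}$ is a uniformly integrable martingale, its transform with multipliers $v_{\tau^{(l)}+n}$ equals $T^{\tau^{(l)}}f$, and $T^{\tau^{(l+1)}}f$ and $\mathcal{F}_{\tau^{(m)}}$ must be read in cumulative time or on the nested spaces.
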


\begin{proof}[Proof of Lemma \ref{lem:induce}] 
We proceed by mathematical induction on \( l \).

\paragraph{Base Case (\( l = 1 \))}
When $l=1,$ we can prove the case by 
Lemma \ref{lem:essential}. Letting $$(\Omega^{(1)}, \mathcal{F}^{(1)}, \mu^{(1)}, \{\mathcal{F}^{(1)}_n\}_{n \ge 0}) =(\Omega, \mathcal{F}, \mu, \{\mathcal{F}_n\}_{n \ge 0})$$ and $\tau^{(1)}=\tau,$ we obtain that  
\[|Tf|\leq 12|f|_0+|f|_{\tau^{(1)}}\mathbf 1_{\{\tau^{(1)}<\infty\}}+|T^{\tau^{(1)}}f|\]
with $T^{\tau^{(1)}}f=(\sum\limits_{n=\tau^{(1)}+1}^{\infty}v_{n-1} \Delta_n f) \mathbf 1_{\{\tau^{(1)}<\infty\}}.$ This implies that
\[|Tf|\leq 12|f|_0+14\sum\limits_{m=1}^{l-1}|f|_{\tau^{(m)}}\mathbf 1_{\{\tau^{(m)}<\infty\}}+|f|_{\tau^{(l)}}\mathbf 1_{\{\tau^{(l)}<\infty\}}+|T^{\tau^{(l)}}f|.\]
Using (\ref{lem:essential}b), we obtain  
$$\mathbf 1_{\{\tau^{(0)}<\infty\}}\leq 2\mathbb E(\mathbf 1_{\{\tau^{(0)}<\infty\}\backslash\{\tau^{(1)}<\infty\}}|\mathcal{F}_{\tau^{(0)}})\mathbf 1_{\{\tau^{(0)}<\infty\}}.$$ 

\paragraph{Inductive Hypothesis (\(l=j\))}
Assume inductively that stopping times $\tau^{(j)}$  have been constructed. 

\paragraph{Inductive Step (\( l = j + 1 \))}If $\mu\{\tau^{(j)}<\infty\}=0,$ then we define $\tau^{(j+1)}=\tau^{(j)}.$ 
Otherwise, we define the quadruple 
\begin{eqnarray*}
&&(\Omega^{(j+1)}, \mathcal{F}^{(j+1)}, \mu^{(j+1)}, \{\mathcal{F}^{(j+1)}_n\}_{n \ge 0}) \\
&=&(\{\tau^{(j)}<\infty\}, \mathcal{F}|_{\{\tau^{(j)}<\infty\}}, \frac{\mu|_{\{\tau^{(j)}<\infty\}}}{\mu\{\tau^{(j)}<\infty\}}, \{\mathcal{F}^{(j)}_{\tau^{(j)}+n}|_{\{\tau^{(j)}<\infty\}}\}_{n \ge 0}).  
\end{eqnarray*} 
Applying Lemma \ref{lem:essential} to this quadruple, we obtain a stopping time, which we denote by $\tau^{(j+1)}.$ It follows that
\[|T^{\tau^{(j)}}f|\leq 12|f|_{\tau^{(j)}}\mathbf 1_{\{\tau^{(j)}<\infty\}}+|f|_{\tau^{(j+1)}}\mathbf 1_{\{\tau^{(j+1)}<\infty\}}+|T^{\tau^{(j+1)}}f|\]
where $$T^{\tau^{(j+1)}}f=(\sum\limits_{n=\tau^{(j+1)}+1}^{\infty}v_{n-1} \Delta_n f) \mathbf 1_{\{\tau^{(j+1)}<\infty\}}$$ 
and $$\mathbf 1_{\{\tau^{(j)}<\infty\}}\leq 2\mathbb E(\mathbf 1_{\{\tau^{(j)}<\infty\}\backslash\{\tau^{(j+1)}<\infty\}}|\mathcal{F}_{\tau^{(j)}})\mathbf 1_{\{\tau^{(j)}<\infty\}}.$$ Using the inductive hypothesis, we have
\[|Tf|\leq 12|f|_0+14\sum\limits_{m=1}^{j}|f|_{\tau^{(m)}}\mathbf 1_{\{\tau^{(m)}<\infty\}}+|f|_{\tau^{(j+1)}}\mathbf 1_{\{\tau^{(j+1)}<\infty\}}+|T^{\tau^{(j+1)}}f|.\]

\paragraph{Conclusion}
This completes the proof by induction.
\end{proof}

\begin{proof}[Proof of Theorem \ref{thm:sparse}] 
Following Lemma \ref{lem:induce}, we have $\lim\limits_{m\rightarrow\infty}\mu(\{\tau^{(m)}<\infty\})=0.$ Then
\[|Tf|\leq 12|f|_0+14\sum\limits_{m=1}^{\infty}|f|_{\tau^{(m)}}\mathbf 1_{\{\tau^{(m)}<\infty\}}\leq 17\sum\limits_{m=0}^{\infty}|f|_{\tau^{(m)}}\mathbf 1_{\{\tau^{(m)}<\infty\}}.\] This is \eqref{eq:main} with $S|f|=\sum\limits_{m=0}^{\infty}|f|_{\tau^{(m)}}\mathbf 1_{\{\tau^{(m)}<\infty\}}$  and $C=17.$ \footnote{No attempt is made in this proof to determine the optimal constant $C$. The number $17$ is employed as a convenient prime.}
\end{proof}

\subsection{Weighted Inequalities for Uniformly Bounded Martingale Transforms}

The estimation of weighted norms for the sparse operators is achieved through duality theory combined with a Parseval-type identity for conditional expectations. This approach transforms the norm estimation problem into a dual framework where the structural properties of conditional expectations---specifically their role as self-adjoint operators in the context of \( L^p \)-\( L^{p'} \) duality---can be systematically applied.

\begin{proof}[Proof of Theorem \ref{Theorem_SXC}] Let $\sigma=w^{-\frac{1}{p-1}}$, then $\sigma^p w = \sigma.$ Because $$\| S|f|\|_{L^p (w)}
\lesssim  [w]_{A_p} ^{\max (1, 1 / (p - 1))} \| f \|_{L^p (w)}$$ is equivalent to
\begin{equation}\label{Theorem_IA} \| S (| f | \sigma )\|_{L^p (w)} \lesssim [w]_{A_p} ^{\max (1, 1 / (p - 1))} \| f
   \|_{L^p (\sigma)} ,\end{equation}
we will prove \eqref{Theorem_IA}.

In view of duality, it suffices to estimate

\[ \mathbbm{E} \left[ \sum\limits_{m=0}^{+\infty} (| f | \sigma)_{\tau^{(m)}} \mathbf 1_{E_m} hw \right] ,\]
 where $h$ is a positive function in ${ L^{p^\prime}(w)}$ and $E_m=\{\tau^{(m)}<\infty\}$. Following from the fundamental relationship between conditional expectations under the original and weighted measures, we have  
 \[ \mathbbm{E} \left[ \sum\limits_{m=0}^{+\infty} (| f | \sigma)_{\tau^{(m)}} \mathbf 1_{E_m} hw \right]
   =\mathbbm{E} \left[ \sum\limits_{m=0}^{+\infty} | f |_{\tau^{(m)}, \sigma}  h _{\tau^{(m)}, w} \mathbf 1_{E_m} \sigma_{\tau^{(m)}}
   w_{\tau^{(m)}} \right] .\]

    Case $1:$ $p\geq2.$  We have $\max (p, p^\prime)=p$ and we aim at a constant
\[ [w]_{A_p} ^{\max (1, 1 / (p - 1))} =[w]_{A_p} = \sup_{n} \| w_{n}
   \sigma^{p-1}_{n} \|_{\infty}. \]
We continue the above calculation
\begin{eqnarray*}
\mathbbm{E} \left[ \sum\limits_{m=0}^{+\infty} (| f | \sigma)_{\tau^{(m)}} \mathbf 1_{E_m}  h w \right]
  & \leqslant& [w]_{A_p}  \mathbbm{E} \left[ \sum\limits_{m=0}^{+\infty} | f
   |_{\tau^{(m)}, \sigma} h _{\tau^{(m)}, w} \mathbf 1_{E_m} \sigma^{2 - p}_{\tau^{(m)}}  \right]\\
  &=& [w]_{A_p} \mathbbm{E} \left[ \sum\limits_{m=0}^{+\infty} | f
   |_{\tau^{(m)}, \sigma} h _{\tau^{(m)}, w} (\mathbf 1_{E_m})^{p-1}\sigma^{2 - p}_{\tau^{(m)}}  \right] .
\end{eqnarray*}
Using the conditional sparsity and Hölder’s inequality of the conditional expectation, we have
  \begin{eqnarray*}
\mathbf 1_{E_m}^{p-1}&\leq&2^{p-1}(\mathbf 1_{S_m})^{p-1}_{\tau^{(m)}}\mathbf 1_{E_m}\\
   &=&2^{p-1} (w^{\frac{1}{p}}\sigma^{\frac{1}{ p'}}\mathbf 1_{S_m})^{p-1}_{\tau^{(m)}}\mathbf 1_{E_m}\\
    &\leq&2^{p-1} (w\mathbf 1_{S_m})^{\frac{1}{ p'}}_{\tau^{(m)}}
    (\sigma\mathbf 1_{S_m})^{\frac{p-1}{p^\prime}}_{\tau^{(m)}}\mathbf 1_{E_m}.
    \end{eqnarray*}

Since $2-p\leq0,$ we have $\sigma^{2-p}_{\tau^{(m)}} \leq(\sigma\mathbf 1_{S_m})^{2-p}_{\tau^{(m)}} $ on  $E_m.$
Thus
   \begin{eqnarray*}
&~& \mathbbm{E}  \left[ \sum\limits_{m=0}^{+\infty} (| f | \sigma)_{\tau^{(m)}} \mathbf 1_{E_m}h w \right] \\
 &\leq & [w]_{A_p}  \mathbbm{E} \left[ \sum\limits_{m=0}^{+\infty} | f|_{\tau^{(m)}, \sigma} h_{\tau^{(m)}, w} (\mathbf 1_{E_m})^{p-1}_{\tau^{(m)}}(\sigma\mathbf 1_{S_m})^{2 - p}_{\tau^{(m)}}  \right] \\
  &\lesssim& [w]_{A_p}  \mathbbm{E} \left[ \sum\limits_{m=0}^{+\infty} | f
   |_{\tau^{(m)}, \sigma} h_{\tau^{(m)}, w}(w\mathbf 1_{S_m})^{\frac{1}{ p'}}_{\tau^{(m)}}
    (\sigma\mathbf 1_{S_m})^{\frac{p-1}{p^\prime}}_{\tau^{(m)}}\mathbf 1_{E_m}(\sigma\mathbf 1_{S_m})^{2-p}_{\tau^{(m)}}  \right]
   \\
  &=& [w]_{A_p}  \mathbbm{E} \left[ \sum\limits_{m=0}^{+\infty} | f
   |_{\tau^{(m)}, \sigma}h_{\tau^{(m)}, w} (w\mathbf 1_{S_m})^{\frac{1}{p'}}_{\tau^{(m)}}(\sigma\mathbf 1_{S_m})^{\frac{1}{p}}_{\tau^{(m)}}  \right] .
   \end{eqnarray*}
Using Hölder’s inequality, we have
      \begin{eqnarray*} 
&~&\mathbbm{E}  \left[ \sum\limits_{m=0}^{+\infty} (|f| \sigma)_{\tau^{(m)}} \mathbf 1_{E_m} hw \right]\\
  & \leqslant& [w]_{A_p} \left[ \sum\limits_{m=0}^{+\infty}\mathbbm{E}  | f
   |_{\tau^{(m)}, \sigma}^p(\sigma\mathbf 1_{S_m})_{\tau^{(m)}}  \right]^{\frac{1}{p}}
   \left[ \sum\limits_{m=0}^{+\infty} \mathbbm{E}h^{p^\prime}_{\tau^{(m)}, w} (w\mathbf 1_{S_m})_{\tau^{(m)}} \right]^{\frac{1}{p^\prime}}\\
  &=& [w]_{A_p} \left[ \sum\limits_{m=0}^{+\infty}\mathbbm{E}  | f
   |_{\tau^{(m)}, \sigma}^p\sigma\mathbf 1_{S_m} \right]^{\frac{1}{p}}
   \left[ \sum\limits_{m=0}^{+\infty} \mathbbm{E}h^{p^\prime}_{\tau^{(m)}, w} w\mathbf 1_{S_m}\right]^{\frac{1}{p^\prime}}\\
  &\leqslant& [w]_{A_p} \left[ \sum\limits_{m=0}^{+\infty}\mathbbm{E}(M_{ \sigma}f) ^p\sigma\mathbf 1_{S_m} \right]^{\frac{1}{p}}
   \left[ \sum\limits_{m=0}^{+\infty} \mathbbm{E}(M_{w}h)^{p^\prime}w\mathbf 1_{S_m}\right]^{\frac{1}{p^\prime}}\\
  &\leqslant& [w]_{A_p}  \left[\mathbbm{E}(M_{\sigma}f )^p\sigma \right]^{\frac{1}{p}}
   \left[\mathbbm{E}(M_{w}h)^{p^\prime}w\right]^{\frac{1}{p^\prime}},
\end{eqnarray*}
where the maximal functions are taken with respect to
 weighted measure:
\begin{equation}\label{notation_weightedmax}
M_{\sigma}{f}=\sup_{n}| f|_{n,\sigma},~M_{w}{h}=\sup_{n}h_{n,w}.
\end{equation}
In virtue of the boundedness of Doob's maximal operator, we obtain that
 \begin{equation} \label{case1}\mathbbm{E}  \left[ \sum\limits_{m=0}^{+\infty} (|f| \sigma)_{\tau^{(m)}} \mathbf 1_{E_m} hw \right] \lesssim [w]_{A_p}  \| f
   \|_{L^p (\sigma)} \| h\|_{L^{p^\prime} (w)}.
 \end{equation}
It follows that
\begin{equation*} \| S (|f| \sigma )\|_{L^p (w)} \lesssim [w]_{A_p} ^{\max (1, 1 / (p - 1))} \| f
   \|_{L^p (\sigma)}. \end{equation*}

 Case $2:$ $1<p\leqslant2.$   We have $\max (p, p^\prime)=p^\prime\geq2$, and we aim at a constant
\begin{eqnarray*}
[w]_{A_p} ^{\max (1, 1 / (p - 1))}
&=&[w]_{A_p} ^{1 / (p - 1)} \\
&=& \sup_{n} \| w_{n}\sigma^{p-1}_{n} \|_{\infty} ^{1 / (p - 1)} \\
&=& \sup_{n} \| \sigma_{n}w_{n}^{1 / (p - 1)} \|_{\infty}  \\
&=& \sup_{n} \| \sigma_{n}w_{n}^{p^\prime - 1} \|_{\infty} \\
&=&\|\sigma\|_{A_{p^\prime}} .
\end{eqnarray*}
Then
\begin{eqnarray*} \mathbbm{E} \left[ \sum\limits_{m=0}^{+\infty} (|f| \sigma)_{\tau^{(m)}} \mathbf 1_{E_m} h w \right] \end{eqnarray*}
becomes \begin{eqnarray*}
  \mathbbm{E} \left[ \sum\limits_{m=0}^{+\infty}  (hw)_{\tau^{(m)}}\mathbf 1_{E_m} |f| \sigma \right].
\end{eqnarray*}
It follows from \eqref{case1} of Case $1$ that
\begin{eqnarray*}
  \mathbbm{E} \left[ \sum\limits_{m=0}^{+\infty}  (hw)_{\tau^{(m)}}\mathbf 1_{E_m} |f| \sigma\right]
  \lesssim \|\sigma\|_{A_{p^\prime}} \| h
   \|_{L^{p^\prime} (w)} \| f \|_{L^{p} (\sigma)}.
   \end{eqnarray*}
Thus we have
\begin{eqnarray*} \mathbbm{E} \left[ \sum\limits_{m=0}^{+\infty} (|f| \sigma)_{\tau^{(m)}} \mathbf 1_{E_m} h w \right]
  \lesssim  [w]_{A_p}^{1 / (p - 1)}  \| f \|_{L^{p} (\sigma)} \| h\|_{L^{p^\prime} (w)}
\end{eqnarray*}
which implies
\begin{eqnarray*}\| S (|f| \sigma) \|_{L^p (w)} &\lesssim&  [w]_{A_p} ^{\max (1, 1 / (p - 1))} \| f
   \|_{L^p (\sigma)} .\end{eqnarray*}
\end{proof}

\begin{proof}[Proof of Theorem \ref{Theorem_SM}]
We set $\sigma$ the dual weight of $w$ in the sense $\sigma^p w = \sigma$. So $\| S_{\nu} | f | \|_{L^p (w)}
\lesssim  [w]_{(A_p)^{\frac{1}{p-1}} (A_r)^{\frac{1}{\nu}-\frac{1}{p-1}}} \| f \|_{L^p (w)}$ becomes
\begin{eqnarray*}
\| S_{\nu} (| f | \sigma) \|_{L^p (w)} &\lesssim&  [w]_{(A_p)^{\frac{1}{p-1}} (A_r)^{\frac{1}{\nu}-\frac{1}{p-1}}}  \| | f | \sigma \|_{L^p (w)} \\ &=& [w]_{(A_p)^{\frac{1}{p-1}} (A_r)^{\frac{1}{\nu}-\frac{1}{p-1}}}  \| f\|_{L^p (\sigma)} . \end{eqnarray*}
Because there exists a function $h\geq0$ with $\|h\|_{ L^{{(p/\nu)}’}(w)}=1$ such that
\begin{eqnarray*}
\| S_{\nu} (| f | \sigma) \|_{L^p (w)} &=&\|S_{\nu}( | f |\sigma )\|_{L^{\nu} (hw)} ,
   \end{eqnarray*}
it suffices to estimate $\|S_{\nu} (| f |\sigma) \|_{L^{\nu} (hw)}.$
We obtain that

\begin{eqnarray} &~&\mathbbm{E} \left[ \sum\limits_{m=0}^{+\infty} (| f |\sigma)^{\nu}_{\tau^{(m)}} | h |_{\tau^{(m)}, w} \mathbf 1_{E_m} w_{\tau^{(m)}} \right]\\
  & =&\mathbbm{E} \left[\sum\limits_{m=0}^{+\infty} | f |^{\nu}_{\tau^{(m)}, \sigma} | h |_{\tau^{(m)}, w} \mathbf 1_{E_m} \sigma^{\nu}_{\tau^{(m)}}w_{\tau^{(m)}} \right] \label{as} \\
  & =&\mathbbm{E} \left[ \sum\limits_{m=0}^{+\infty} | f |^{\nu}_{\tau^{(m)}, \sigma} | h |_{\tau^{(m)}, w} \mathbf 1_{E_m} (w_{\tau^{(m)}}\sigma_{\tau^{(m)}}^{p-1}w_{\tau^{(m)}}^{-1})^{\frac{\nu}{p-1}}w_{\tau^{(m)}} \right]  \\
  &=&  \mathbbm{E} \left[ \sum\limits_{m=0}^{+\infty}
  A_{\tau^{(m)},p} (w)^{\frac{\nu}{p-1}} | f|^{\nu}_{\tau^{(m)}, \sigma} | h |_{\tau^{(m)}, w} \mathbf 1_{E_m} w^{1-\frac{\nu}{p-1}}_{\tau^{(m)}}  \right].
\end{eqnarray}

Using the conditional sparsity and H\"older’s inequality of the conditional expectation,  we have
 \begin{eqnarray*} \mathbf 1_{E_m} ^r&\lesssim &(\mathbf 1_{S_m})^{r}_{\tau^{(m)}}\mathbf 1_{E_m} \\
& =& (w^{\frac{1}{r}}w^{-\frac{1}{r}}\mathbf 1_{S_m})^{r}_{\tau^{(m)}}\mathbf 1_{E_m} \\
& \leq& (w\mathbf 1_{S_m})_{\tau^{(m)}}(w^{-\frac{r’}{r}}\mathbf 1_{S_m})^{\frac{r}{r’}}_{\tau^{(m)}}\mathbf 1_{E_m} \\
& \leq& (w\mathbf 1_{S_m})_{\tau^{(m)}}(w^{-\frac{r’}{r}})^{\frac{r}{r’}}_{\tau^{(m)}}\mathbf 1_{E_m} .
       \end{eqnarray*}

Thus
\begin{eqnarray*} w^{1-\frac{\nu}{p-1}}_{\tau^{(m)}} \mathbf 1_{E_m}
&=&(\mathbf 1^r_{E_m} w_{\tau^{(m)}})^{1-\frac{\nu}{p-1}}\mathbf 1_{E_m} \\
&\lesssim& A_{\tau^{(m)},r} (w)^{1-\frac{\nu}{p-1}} (w\mathbf 1_{S_m})^{1-\frac{\nu}{p-1}}_{\tau^{(m)}}\mathbf 1_{E_m}.
\end{eqnarray*}

It follows from $1-\frac{\nu}{p-1}=(-\frac{1}{p-1})\frac{\nu}{p}+(1-\frac{\nu}{p})$ that
\begin{eqnarray*}
 (w\mathbf 1_{S_m})^{1-\frac{\nu}{p-1}}_{\tau^{(m)}}\mathbf 1_{E_m}=\big((w\mathbf 1_{S_m})_{\tau^{(m)}}^{-\frac{1}{p-1}}\big) ^{\frac{\nu}{p}} (w\mathbf 1_{S_m})^{1-\frac{\nu}{p}}_{\tau^{(m)}}\mathbf 1_{E_m} .
\end{eqnarray*}

By H\"older’s inequality, we have
\begin{eqnarray*} &&\mathbbm{E} \left[ \sum\limits_{m=0}^{+\infty} (| f |\sigma)^{\nu}_{\tau^{(m)}} | h |_{\tau^{(m)}, w} \mathbf 1_{E_m} w_{\tau^{(m)}} \right] \\
  & \lesssim&    [w]^{\nu}_{(A_p)^{\frac{1}{p-1}} (A_r)^{\frac{1}{\nu}-\frac{1}{p-1}}} \left[ \mathbbm{E} \sum\limits_{m=0}^{+\infty}
   | f|^{p}_{\tau^{(m)}, \sigma} (w\mathbf 1_{S_m})_{\tau^{(m)}}^{-\frac{1}{p-1}} \right]^{\frac{\nu}{p}}\left[ \mathbbm{E} \sum\limits_{m=0}^{+\infty}
    h^{(\frac{p}{v})’}_{\tau^{(m)}, w}  (w\mathbf 1_{S_m})_{\tau^{(m)}} \right]^{{1-\frac{\nu}{p}}}.
      \end{eqnarray*}

Because
 \begin{eqnarray*} \mathbf 1_{E_m} &=&\mathbf 1_{E_m} ^p\lesssim(\mathbf 1_{S_m})^{p}_{\tau^{(m)}}\mathbf 1_{E_m} \\
& \leq& (w^{\frac{1}{p}}w^{-\frac{1}{p}}\mathbf 1_{S_m})^{p}_{\tau^{(m)}}\mathbf 1_{E_m} \\
& \leq& (w\mathbf 1_{S_m})_{\tau^{(m)}}(w^{-\frac{p^\prime}{p}}\mathbf 1_{S_m})^{\frac{p}{p^\prime}}_{\tau^{(m)}}\mathbf 1_{E_m} ,
       \end{eqnarray*}
we have $(w\mathbf 1_{S_m})^{-\frac{1}{p-1}}_{\tau^{(m)}}\mathbf 1_{E_m}\lesssim (\sigma\mathbf 1_{S_m})_{\tau^{(m)}}\mathbf 1_{E_m}.$

It follows that
\begin{eqnarray*} &&\mathbbm{E} \left[ \sum\limits_{m=0}^{+\infty} (| f |\sigma)^{\nu}_{\tau^{(m)}} | h |_{\tau^{(m)}, w} \mathbf 1_{E_m} w_{\tau^{(m)}} \right]\\
  & \lesssim& [w]^{\nu}_{(A_p)^{\frac{1}{p-1}} (A_r)^{\frac{1}{\nu}-\frac{1}{p-1}}}\left[ \mathbbm{E} \sum\limits_{m=0}^{+\infty}
   | f|^{p}_{\tau^{(m)},\sigma} (\sigma\mathbf 1_{S_m})_{\tau^{(m)}}\right]^{\frac{\nu}{p}}
   \left[ \mathbbm{E} \sum\limits_{m=0}^{+\infty}
   | h |^{(\frac{p}{v})’}_{\tau^{(m)}, w} (w\mathbf 1_{S_m})_{\tau^{(m)}} \right]^{{1-\frac{\nu}{p}}}\\
   &=& [w]^{\nu}_{(A_p)^{\frac{1}{p-1}} (A_r)^{\frac{1}{\nu}-\frac{1}{p-1}}} \left[ \mathbbm{E} \sum\limits_{m=0}^{+\infty}| f|^{p}_{\tau^{(m)}, \sigma} \sigma\mathbf 1_{S_m}\right]^{\frac{\nu}{p}}
   \left[ \mathbbm{E} \sum\limits_{m=0}^{+\infty}
   | h |^{(\frac{p}{v})’}_{\tau^{(m)}, w} w\mathbf 1_{S_m} \right]^{{1-\frac{\nu}{p}}}\\
   &\leqslant&  [w]^{\nu}_{(A_p)^{\frac{1}{p-1}} (A_r)^{\frac{1}{\nu}-\frac{1}{p-1}}}  \left[ \mathbbm{E}
  (M_{\sigma}  f)^{p} u\right]^{\frac{\nu}{p}}
   \left[ \mathbbm{E}
  (M_w  h )^{(\frac{p}{v})’}w\right]^{{1-\frac{\nu}{p}}}\\
   &\lesssim&  [w]^{\nu}_{(A_p)^{\frac{1}{p-1}} (A_r)^{\frac{1}{\nu}-\frac{1}{p-1}}} \left[ \mathbbm{E}(| f|)^{p} \sigma\right]^{\frac{\nu}{p}}
   \left[ \mathbbm{E}
 | h |^{(\frac{p}{v})’}w\right]^{{1-\frac{\nu}{p}}}\\
   &=&  [w]^{\nu}_{(A_p)^{\frac{1}{p-1}} (A_r)^{\frac{1}{\nu}-\frac{1}{p-1}}} \left[ \mathbbm{E}
  (| f|)^{p} \sigma\right]^{\frac{\nu}{p}}.
      \end{eqnarray*}

We conclude that
\begin{eqnarray*}
\| S_{\nu}( | f | \sigma) \|_{L^p (w)} &\lesssim& [w]_{(A_p)^{\frac{1}{p-1}} (A_r)^{\frac{1}{\nu}-\frac{1}{p-1}}} \left[ \mathbbm{E}
  (| f|)^{p} \sigma\right]^{\frac{1}{p}} . \end{eqnarray*}
\end{proof}

\section{Doob's Maximal Operator}\label{sec:mar_max}
We establish a simple sparse domination for Doob's maximal operator via a newly introduced method of locally filter-shifted stopping times. By simple, we mean that its domination is relative to that of uniformly bounded martingale transforms. 

\subsection{Simple Sparse Domination for  Doob's Maximal Operator}\label{subsec:mar_max}
\begin{lemma}\label{lem:max_essential}Let $(\Omega, \mathcal{F}, \mu, \{\mathcal{F}_n\}_{n \ge 0})$ be a filtered probability space. There is a constant \( C > 0 \) so that for all  \( f \in L^1 \), there is a stopping time $\tau$ on $(\Omega, \mathcal{F}, \mu, \{\mathcal{F}_n\}_{n \ge 0})$ such that \[Mf\leq 2|f|_0\mathbf 1_{\Omega\setminus E}+M^{\tau}f,\]
where $M^{\tau}f=(\sup\limits_{n\geq\tau}|f_n|) \mathbf 1_{\{\tau<\infty\}}.$
Furthermore, $\tau$ has the following properties
\begin{itemize}
    \item[(\ref{lem:max_essential}a)]  $\mu(\tau<\infty)\leq \frac{1}{2}\mu(\Omega).$
    \item[(\ref{lem:max_essential}b)]  For all $A \subset \Omega, A \in \mathcal{F}_0$ there holds
$\mathbf 1_{A}\leq 2\mathbb E(\mathbf 1_{E(A)}|\mathcal{F}_{0})\mathbf 1_{A},$
where $E(A)=A\backslash \{\tau<\infty\}=A\cap\{T=\infty\}.$
\end{itemize}
\end{lemma}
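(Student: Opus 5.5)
I would follow the proof of Lemma \ref{lem:essential} almost verbatim, with the martingale transform removed from the stopping rule. Given $f\in L^1$ with $f_n=\mathbb E(f|\mathcal{F}_n)$, I set
\[
\tau=\inf\{n: |f_n|>2|f|_0\},\qquad E=\{\tau<\infty\}=\{Mf>2|f|_0\}.
\]
Here $|f|_0$ means $\mathbb E(|f|\mid\mathcal{F}_0)$, and the constant $2$ is chosen because only one weak-type estimate will be needed. The trivial cases $\|f\|_1=0$ and $\mu(E)=0$ are handled with $\tau\equiv\infty$.

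For (\ref{lem:max_essential}a), I restrict to $\Omega_1=\{|f|_0\neq 0\}$, which lies in $\mathcal{F}_0$. On $\Omega_0$ we have $f_n=0$ for all $n$, because $|f_n|\le |f|_n$ and $|f|_0=0$ forces $|f|=0$ a.e. on $\Omega_0$; hence $E\subset\Omega_1$. On $\Omega_1$, with the normalized measure $\widetilde\mu$, I normalize $\widetilde f=f/|f|_0$. This is legitimate because $|f|_0$ is $\mathcal{F}_0$-measurable, so it commutes with all the conditional expectations. Then $\widetilde f$ is a uniformly integrable martingale with $\|\widetilde f\|_{L^1(\widetilde\mu)}=1$, and $E=\{M\widetilde f>2\}$. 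Lemma \ref{lem: weak} then gives $\widetilde\mu(E)\le \tfrac12$, hence $\mu(E)\le\tfrac12\mu(\Omega_1)\le\tfrac12\mu(\Omega)$.

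For (\ref{lem:max_essential}b), I rerun the same argument with $\Omega$ replaced by any $A\in\mathcal{F}_0$. Restricting to $A$ preserves the martingale structure because $A\in\mathcal{F}_0$, and it gives $\mu(A)\le 2\mu(A\cap\{\tau=\infty\})$. Applying this to $A\cap A_1$ for arbitrary $A_1\in\mathcal{F}_0$ and integrating yields $\int_{A_1}\mathbf 1_A\,d\mu\le 2\int_{A_1}\mathbb E(\mathbf 1_{E(A)}|\mathcal{F}_0)\,d\mu$. Since $A_1$ is arbitrary, this gives $\mathbf 1_A\le 2\mathbb E(\mathbf 1_{E(A)}|\mathcal{F}_0)$ a.e., and multiplying by $\mathbf 1_A$ gives the claim.

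For the pointwise bound I split $\Omega$ into two parts.
\begin{itemize}
\item On $\Omega\setminus E$, i.e.\ where $\tau=\infty$, we have $|f_n|\le 2|f|_0$ for all $n$, so $Mf\le 2|f|_0$.
\item On $E$, write $\tau(x)=n_0$. Then $\sup_{n<n_0}|f_n(x)|\le 2|f|_0(x)$, while $\sup_{n\ge n_0}|f_n(x)|=M^\tau f(x)$.
\end{itemize}
On $E$ this gives $Mf\le \max\{2|f|_0,M^\tau f\}$. But $M^\tau f\ge |f_{\tau}|>2|f|_0$ there, so the maximum is exactly $M^\tau f$. Combining the two parts yields $Mf\le 2|f|_0\mathbf 1_{\Omega\setminus E}+M^\tau f$.

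This last point is why no extra term $|f|_\tau\mathbf 1_E$ appears, in contrast to Lemma \ref{lem:essential}: the term $|f_{n_0}|$ at the stopping time is absorbed into $M^\tau f$, since the supremum is taken over $n\ge\tau$ inclusive. The constant in the statement can therefore be taken to be $C=2$.

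The only delicate step is the normalization on $\Omega_1$ and the justification that restricting to a set in $\mathcal{F}_0$ preserves the martingale and weak-type structure. I expect this to go through exactly as in the proof of Lemma \ref{lem:essential}.
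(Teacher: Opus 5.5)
Your proposal is correct and follows the paper's proof essentially step for step. It uses the same stopping time $\tau=\inf\{n:|f_n|>2|f|_0\}$, the same normalization by $|f|_0$ on $\Omega_1=\{|f|_0\neq 0\}$ with the weak-type $(1,1)$ bound for $M$ giving (a) and, after localizing to $A\in\mathcal{F}_0$, (b), and the same key observation that $Mf=\sup_{n\ge\tau}|f_n|$ on $\{\tau<\infty\}$; your explicit checks that $E\subset\Omega_1$ up to a null set and that $|f_\tau|>2|f|_0\ge\sup_{n<\tau}|f_n|$ on $E$ simply spell out details the paper leaves implicit.
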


\begin{proof}[Proof of Lemma \ref{lem:max_essential}] The case $\|f\|_1=0$ is trivial with $\tau\equiv\infty$ and $M^{\tau}f=0.$
Let $\|f\|_1>0$. For a uniformly integral martingale
 $f = (f_n)_{n \ge 0}$ on $(\Omega, \mathcal{F}, \mu, \{\mathcal{F}_n\}_{n \ge 0})$ with $f_n=\mathbb E(f|\mathcal{F}_n),$ we
set
\begin{equation}
\label{eq:set_definition}
E = \{ Mf> C|f|_0\},
\end{equation}
where the constant \( C \) to be determined later. Let $\tau=\inf\{n:|f_n|>C|f|_0\}.$ Then $E=\{\tau<\infty\}.$ 
Denoting $\Omega_{0}=\{|f|_0=0\}$ and $\Omega_{1}=\{|f|_0\neq0\},$ we write 
\[\widetilde{\Omega}=\Omega_1,~\widetilde{\mathcal{F}} =\mathcal{F}|_{\Omega_1},~\widetilde{\mathcal{F}_n} =\mathcal{F}_n|_{\Omega_1},~\widetilde{\mu}= \frac{\mu|_{\Omega_{1}}}{\mu(\Omega_{1})}\] 
and \[\widetilde{f_n} =\frac{f_n|_{\Omega_1}}{|f|_0|_{\Omega_1}},~n\geq 0.\]
 It follows that $(\widetilde{f_n})_{n\geq 0}$ is a uniformly integral martingale on $(\widetilde{\Omega}, \widetilde{\mathcal{F}}, \widetilde{\mu}, 
\{\widetilde{\mathcal{F}_n}\}_{n \ge 0})$ and \[\lim_{n\rightarrow+\infty}\widetilde{f_n}=\frac{f|_{\Omega_1}}{|f|_0|_{\Omega_1}},~a.e.\] Here, $\frac{f|_{\Omega_1}}{|f|_0|_{\Omega_1}}$ is denoted by $\widetilde{f}.$
Recalling Remark \ref{re: uniform}, we have $\widetilde{f}=(\widetilde{f_n})_{n\geq 0}.$ 
Letting $\widetilde{\tau}=\inf\{n:|\widetilde{f}_n|>C\},$ we deduce that
$\tau|_{\Omega_1}=\widetilde{\tau}$ and 
\[
E= \left\{ M \widetilde{f} > C\right\}.
\]
Thus
\[
\widetilde{\mu}(E)
= \widetilde{\mu}(\left\{M \widetilde{f}>C\right\}) 
\leq\frac{ 1}{C}  \| \widetilde{f}\|_{L^1(\widetilde{\Omega}, \widetilde{\mathcal{F}}, \widetilde{\mu})}=\frac{ 1}{C},
\]
where we have used Lemma \ref{lem: weak}.  
Setting $C=2,$ we get $\widetilde{\mu}(E)\leq \frac{1}{2}$.

The case $\widetilde{\mu}(E)=0$ is trivial with $\tau\equiv\infty$ and $M^{\tau}f=0.$ 

We prove the case $0<\widetilde{\mu}(E)$ as follows.
Because $\widetilde{\mu}(E)\leq \frac{1}{2},$ we have
$\mu(E)\leq\frac{1}{2}\mu(\Omega_{1})\leq\frac{1}{2}\mu(\Omega).$
Then $\mu(\tau<\infty)\leq \frac{1}{2}\mu(\Omega),$ which is (\ref{lem:max_essential}a) and equivalent to
\[\mu(\Omega)\leq2\mu(\Omega\backslash\{\tau<\infty\})=2\mu(\Omega\cap\{\tau=\infty\}).\]

Let $A\in \mathcal{F}_0$.
Denoting $E(A)=A\backslash \{\tau<\infty\}=A\cap\{\tau=\infty\},$ we have
$$  \mu(A) \leqslant  2 \mu(E(A)),$$
by repeating the above proof and replacing $\Omega$ by $A.$
For any $A$ and $A_1\in \mathcal{F}_0,$ the same reasoning applied in the proof of Lemma \ref{lem:essential} shows that
\begin{eqnarray*}
\int_{A_1}\mathbf 1_{A}d\mu\leq2\int_{A_1}\mathbb E(\mathbf 1_{E(A)}|\mathcal{F}_0)d\mu.
\end{eqnarray*}
Since $A_1$ is arbitrary, we obtain $\mathbf 1_{A}\leq 2\mathbb E(\mathbf 1_{E(A)}|\mathcal{F}_0)\mathbf 1_{A}.$ This is (\ref{lem:max_essential}b).

For the above $\tau,$ we claim that
\[Mf\leq 2|f|_0\mathbf 1_{\Omega\setminus E}+M^{\tau}f,\]
where $M^{\tau}f=\sup\limits_{n\geq \tau}|f_n|1_{\{\tau<\infty\}}.$ 


Let $x\in \Omega\setminus E= \{\tau=\infty\},$ then $Mf(x)\leq2|f|_0(x).$ It follows that
\begin{equation}\label{eq:max_setminus}
|Mf|\mathbf 1_{\Omega\setminus E}\leq2|f|_0\mathbf 1_{\Omega\setminus E}.
\end{equation}

Let $x\in E.$ Then $x\in \{\tau<\infty\}.$ Suppose $\tau(x)=n_0.$ It follows that
\[(Mf)\mathbf 1_{\{\tau=n_0\}}=(\sup\limits_{n\geq n_0}|f_n|)\mathbf 1_{\{\tau=n_0\}},\]
which is a key observation.
Combining this with \eqref{eq:max_setminus}, we derive
\[Mf\leq 2|f|_0\mathbf 1_{\Omega\setminus E}+M^{\tau}f,\]
where $M^{\tau}f=\sup\limits_{n\geq \tau}|f_n|1_{\{\tau<\infty\}}.$ 
\end{proof}

\begin{lemma}\label{lem:max_induce}Let $l\in \mathbb {N}.$ For all  \( f \in L^1(\mu) \), there are stopping times $\tau_m$ on filtered probability spaces $(\Omega^{(m)}, \mathcal{F}^{(m)}, \mu^{(m)}, \{\mathcal{F}^{(m)}_n\}_{n \ge 0})$ such that \[Mf\leq 2\sum\limits_{m=1}^{l}|f|_{\tau^{(m-1)}}\mathbf 1_{\{\tau^{(m-1)}<\infty\}\backslash\{\tau^{(m)}<\infty\}}+M^{\tau^{(l)}}f,\]
where $M^{\tau^{(l)}}f=(\sup\limits_{n\geq\tau_l}|f_n|) \mathbf 1_{\{\tau^{(l)}<\infty\}}.$ 
Furthermore, $\tau^{(m)}$ has the following properties
\begin{equation}\label{eq:sparse}
\mathbf 1_{\{\tau^{(m-1)}<\infty\}}\leq 2\mathbb E(\mathbf 1_{\{\tau^{(m-1)}<\infty\}\backslash\{\tau^{(m)}<\infty\}}|\mathcal{F}_{\tau^{(m-1)}})\mathbf 1_{\{\tau^{(m-1)}<\infty\}},
\end{equation}
where $\tau^{(0)}$ is understood as $0.$
\end{lemma}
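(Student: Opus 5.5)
The proof goes by induction on \(l\), following the same pattern as Lemma \ref{lem:induce}. Lemma \ref{lem:max_essential} serves as the one-step engine. At each stage it is applied to the local filtered probability space induced by the previous stopping time, in the sense of Definition \ref{def:locally filter-shifted}.

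For the base case \(l=1\), take \(\tau^{(0)}=0\) and \((\Omega^{(1)},\mathcal F^{(1)},\mu^{(1)},\{\mathcal F^{(1)}_n\})=(\Omega,\mathcal F,\mu,\{\mathcal F_n\})\). Let \(\tau^{(1)}\) be the stopping time produced by Lemma \ref{lem:max_essential}. Since \(|f|_0=|f|_{\tau^{(0)}}\) and \(\Omega\setminus E=\{\tau^{(0)}<\infty\}\setminus\{\tau^{(1)}<\infty\}\), that lemma gives exactly
\[
Mf\le 2|f|_{\tau^{(0)}}\mathbf 1_{\{\tau^{(0)}<\infty\}\setminus\{\tau^{(1)}<\infty\}}+M^{\tau^{(1)}}f .
\]
Property (\ref{lem:max_essential}b) with \(A=\Omega\) is \eqref{eq:sparse} for \(m=1\).

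For the inductive step, assume the statement holds for \(l=j\). If \(\mu(\{\tau^{(j)}<\infty\})=0\), set \(\tau^{(j+1)}=\tau^{(j)}\). Then \(M^{\tau^{(j)}}f=0\) a.e. and the new summand vanishes, so both claims are trivial. Otherwise, pass to the local space \(\Omega^{(j+1)}=\{\tau^{(j)}<\infty\}\) with normalized measure and filtration \(\mathcal F_{\tau^{(j)}+n}|_{\{\tau^{(j)}<\infty\}}\). On this space the process \(g_n=f_{\tau^{(j)}+n}\) is a uniformly integrable martingale. This follows from optional sampling (Lemma \ref{T:stopping time}): \(g_n=\mathbb E(f\mid\mathcal F_{\tau^{(j)}+n})\). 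Its maximal function is \(\sup_{n\ge\tau^{(j)}}|f_n|\), and this equals \(M^{\tau^{(j)}}f\) there.

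Applying Lemma \ref{lem:max_essential} to \(g\) gives a stopping time \(\tilde\tau\) on the local space. Put \(\tau^{(j+1)}=\tau^{(j)}+\tilde\tau\) on \(\{\tau^{(j)}<\infty\}\) and \(\tau^{(j+1)}=\infty\) elsewhere. The lemma's decomposition then reads
\[
M^{\tau^{(j)}}f\le 2|f|_{\tau^{(j)}}\mathbf 1_{\{\tau^{(j)}<\infty\}\setminus\{\tau^{(j+1)}<\infty\}}+M^{\tau^{(j+1)}}f .
\]
Here \(g_0=f_{\tau^{(j)}}=\mathbb E(f\mid\mathcal F_{\tau^{(j)}})\), and the key observation in that proof, that \(\sup_{n\ge0}|g_n|=\sup_{n\ge\tilde\tau}|g_n|\) on \(\{\tilde\tau<\infty\}\), translates into the tail \(M^{\tau^{(j+1)}}f\). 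Substituting this into the inductive hypothesis gives the bound for \(l=j+1\).

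For the conditional sparsity \eqref{eq:sparse} at \(m=j+1\), use (\ref{lem:max_essential}b) on the local space. It says that for every \(A\in\mathcal F_{\tau^{(j)}}\) with \(A\subset\{\tau^{(j)}<\infty\}\) we have \(\mu(A)\le 2\mu(A\cap\{\tau^{(j+1)}=\infty\})\); the normalizing constant cancels. Integrating over arbitrary \(A_1\in\mathcal F_{\tau^{(j)}}\), exactly as in the proof of Lemma \ref{lem:essential}, converts this into the pointwise conditional-expectation inequality.

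The main obstacle is the bookkeeping of the shifted space. Three things must be checked:
\begin{enumerate}[(i)]
\item the \(\sigma\)-algebra \(\mathcal F^{(j+1)}_0\) coincides with \(\mathcal F_{\tau^{(j)}}\) restricted to \(\{\tau^{(j)}<\infty\}\);
\item conditional expectations relative to the normalized measure \(\mu^{(j+1)}\) agree with \(\mathbb E(\cdot\mid\mathcal F_{\tau^{(j)}})\) on that set;
\item \(\tau^{(j)}+\tilde\tau\) is an \(\{\mathcal F_n\}\)-stopping time.
\end{enumerate}
Point (iii) holds because \(\{\tau^{(j)}+\tilde\tau=k\}=\bigcup_{i\le k}\{\tau^{(j)}=i\}\cap\{\tilde\tau=k-i\}\), and \(\{\tilde\tau=k-i\}\in\mathcal F_{\tau^{(j)}+k-i}\). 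Intersecting with \(\{\tau^{(j)}=i\}\) places each piece in \(\mathcal F_k\).
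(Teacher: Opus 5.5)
Your proposal is correct and follows the paper's proof: induction on $l$, applying Lemma \ref{lem:max_essential} at each step on the local filtered space over $\{\tau^{(j)}<\infty\}$ with filtration $\mathcal F_{\tau^{(j)}+n}$, then chaining the resulting inequality into the inductive hypothesis. Your extra bookkeeping fills in details the paper leaves implicit: the degenerate case $\mu(\{\tau^{(j)}<\infty\})=0$, the absolute stopping time $\tau^{(j)}+\tilde\tau$, and the conversion of (\ref{lem:max_essential}b) into the conditional sparsity inequality. One small correction: the term the lemma produces is $|g|_0=\mathbb E(|f|\mid\mathcal F_{\tau^{(j)}})=|f|_{\tau^{(j)}}$, not $g_0$; your displayed inequality already uses the correct quantity.
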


\begin{proof}[Proof of Lemma \ref{lem:max_induce}] 
We proceed by mathematical induction on \( l \).

\paragraph{Base Case (\( l = 1 \))}
When $l=1,$ we can prove the case by 
Lemma \ref{lem:max_essential}. Letting $$(\Omega^{(1)}, \mathcal{F}^{(1)}, \mu^{(1)}, \{\mathcal{F}^{(1)}_n\}_{n \ge 0}) =(\Omega, \mathcal{F}, \mu, \{\mathcal{F}_n\}_{n \ge 0})$$ and $\tau^{(1)}=\tau,$ we obtain that  
\[Mf\leq 2|f|_0\mathbf 1_{\Omega\setminus E}+M^{\tau^{(1)}}f,\]
with $M^{\tau^{(1)}}f=(\sup\limits_{n\geq\tau^{(1)}}|f_n|) \mathbf 1_{\{\tau^{(1)}<\infty\}}.$ This implies that
\[Mf\leq 2\sum\limits_{m=1}^{l}|f|_{\tau^{(m-1)}}\mathbf 1_{\{\tau^{(m-1)}<\infty\}\backslash\{\tau^{(m)}<\infty\}}+M^{\tau^{(l)}}f.\]

\paragraph{Inductive Hypothesis}
Assume inductively that stopping times $\tau^{(l)},$ $l=j$ have been constructed. 

\paragraph{Inductive Step (\( l = j + 1 \))}
For $l=j+1,$ we will define 
$\tau^{(j+1)}.$ Letting 
\begin{eqnarray*}
&&(\Omega^{(j+1)}, \mathcal{F}^{(j+1)}, \mu^{(j+1)}, \{\mathcal{F}^{(j+1)}_n\}_{n \ge 0}) \\
&=&(\{\tau^{(j)}<\infty\}, \mathcal{F}|_{\{\tau^{(j)}<\infty\}}, \frac{\mu|_{\{\tau^{(j)}<\infty\}}}{\mu\{\tau^{(j)}<\infty\}}, \{\mathcal{F}^{(j)}_{\tau^{(j)}+n}|_{\{\tau^{(j)}<\infty\}}\}_{n \ge 0}),  
\end{eqnarray*} 
and using Lemma \ref{lem:max_essential}, we obtain that  
\[|M^{\tau^{(j)}}f|\leq 2|f|_{\tau^{(j)}}\mathbf 1_{\{\tau^{(j)}<\infty\}\backslash\{\tau^{(j+1)}<\infty\}}+|M^{\tau^{(j+1)}}f|\]
with $M^{\tau^{(l+1)}}f=(\sup\limits_{n\geq\tau^{(l+1)}}|f_n|) \mathbf 1_{\{\tau^{(l+1)}<\infty\}}.$  Using the inductive hypothesis, we have
\[Mf\leq 2\sum\limits_{m=1}^{l+1}|f|_{\tau^{(m-1)}}\mathbf 1_{\{\tau^{(m-1)}<\infty\}\backslash\{\tau^{(m)}<\infty\}}+M^{\tau^{(l+1)}}f.\]

\paragraph{Conclusion}
By the principle of mathematical induction, the formula holds for all positive integers \( l \).
\end{proof}

\begin{proof}[Proof of Theorem \ref{thm:max_sparse}] 
Following Lemma \ref{lem:max_induce}, we have $\lim\limits_{m\rightarrow\infty}\mu(\{\tau^{(m)}<\infty\})=0.$ Then
\[Mf\leq 2\sum\limits_{m=1}^{+\infty}|f|_{\tau^{(m-1)}}\mathbf 1_{\{\tau^{(m-1)}<\infty\}\backslash\{\tau^{(m)}<\infty\}}.\]This is \eqref{eq:max_main} with $\mathbb{S}|f|=\sum\limits_{m=0}^{+\infty}|f|_{\tau^{(m)}}\mathbf 1_{\{\tau^{(m)}<\infty\}\backslash\{\tau^{(m+1)}<\infty\}}=\sum\limits_{m=0}^{+\infty}|f|_{\tau^{(m)}}\mathbf 1_{S_m}.$
\end{proof}

\subsection{Weighted Inequalities for Doob's Maximal Operator}A major advantage of this simple sparse domination is its ability to deliver $A_p$ weighted estimates for simple sparse operators without recourse to dual theory, while simultaneously yielding the optimal estimates for Doob's maximal operator---one of the key contributions of this work.

\begin{proof}[Proof of  Theorem \ref{thm_Ap}]
($\Rightarrow$) For $p > 1$, fix $n$ and let $B \in \mathcal{F}_n$. Define $f = \sigma \chi_B$. Then $f\in L^1.$ It follows that
\begin{align*}
\int_B \mathbb{E} \left(  \sigma \mid \mathcal{F}_n \right)^p \mathbb{E}(v \mid \mathcal{F}_n) \, d\mu 
&=\int_{\Omega} \mathbb{E} \left(  \sigma\mathbf 1_B \mid \mathcal{F}_n \right)^p \mathbb{E}(v \mid \mathcal{F}_n) \, d\mu \\
&\leq \int_{\Omega} \left(  Mf \right)^p w \, d\mu \\
&\leq \|M\|^p\int_{\Omega}  f^pw d\mu \\
&= \|M\|^p\int_{B}  \sigma d\mu \\
&= \|M\|^p \int_B \mathbb{E} \left(  \sigma \mid \mathcal{F}_n \right) \, d\mu.
\end{align*}
Since $\mathbb{E}(\sigma\mid \mathcal{F}_n) < \infty$ almost everywhere, this establishes $[w]_{A_p}\leq\|M\|^p.$

($\Leftarrow$) Theorem \ref{thm:max_sparse} shows that
\begin{equation}\label{doob}
\int_{\Omega}{M}
(f\sigma)^{p}wd\mu
\leq \sum\limits_{m=0}^{+\infty}\int_{S_m}|f\sigma|_{\tau^{(m)}}^pwd\mu.
\end{equation}

We now estimate $\int_{S_m}|f\sigma|_{\tau^{(m)}}^pwd\mu$ as follows:

\begin{eqnarray*}
\int_{S_m}|f\sigma|_{\tau^{(m)}}^pwd\mu&\leq&\int_{E_m}|f\sigma|_{\tau^{(m)}}^pwd\mu\\
&=&\int_{E_m}|f\sigma|_{\tau^{(m)}}^pw_{\tau^{(m)}}^{p^\prime}w_{\tau^{(m)}}^{1-p^\prime}
\sigma_{\tau^{(m)}}^{p}\sigma_{\tau^{(m)}}^{-p}d\mu\\
&=&\int_{E_m}|f\sigma|_{\tau^{(m)}}^pw_{\tau^{(m)}}^{p^\prime}
\sigma_{\tau^{(m)}}^{p}w_{\tau^{(m)}}^{1-p^\prime}\sigma_{\tau^{(m)}}^{-p}d\mu.
\end{eqnarray*}
In the view of the definition of $A_p$ and the construction of $\tau^{(m)},$ we have
\begin{eqnarray*}
\int_{S_m}|f\sigma|_{\tau^{(m)}}^pwd\mu&\leq&[w]^{p^\prime}_{A_{p}}\int_{E_m}|f\sigma|_{\tau^{(m)}}^pw_{\tau^{(m)}}^{1-p'}
\sigma_{\tau^{(m)}}^{-p}d\mu\\
&\lesssim&[w]^{p^\prime}_{A_{p}}\int_{E_m}|f\sigma|_{\tau^{(m)}}^pw_{\tau^{(m)}}^{1-p^\prime}
    \sigma_{\tau^{(m)}}^{-p}
    (\mathbf 1_{S_m})_{\tau^{(m)}}^{p(p'-1)}d\mu\\
&=&[w]^{p^\prime}_{A_{p}}\int_{E_m}|f\sigma|_{\tau^{(m)}}^pw_{\tau^{(m)}}^{1-p^\prime}
    \sigma_{\tau^{(m)}}^{-p}\mathbb (\mathbf 1_{S_m}w^{\frac{1}{p}}\sigma^{\frac{1}{p'}})_{\tau^{(m)}}^{p(p'-1)}d\mu.
\end{eqnarray*}
Noting that the conditional expectation satisfies H\"{o}lder's inequality,  we have
\begin{eqnarray*} \int_{S_m}|f\sigma|_{\tau^{(m)}}^pwd\mu
&\lesssim&[w]^{p^\prime}_{A_{p}}\int_{E_m}|f\sigma|_{\tau^{(m)}}^pw_{\tau^{(m)}}^{1-p^\prime}
\sigma_{\tau^{(m)}}^{-p}\\
&&\times(w \mathbf 1_{S_m})_{\tau^{(m)}}^{p^\prime-1}
(\sigma\mathbf 1_{S_m})_{\tau^{(m)}}d\mu\\
&\leq&[w]^{p^\prime}_{A_{p}}\int_{E_m}|f\sigma|_{\tau^{(m)}}^p
\sigma_{\tau^{(m)}}^{-p}
(\sigma\mathbf 1_{S_m})_{\tau^{(m)}}
d\mu.\end{eqnarray*}
Then we obtain that
\begin{eqnarray*}
 \int_{S_m}|f\sigma|_{\tau^{(m)}}^pwd\mu
&\lesssim&[w]^{p^\prime}_{A_{p}}\int_{E_m}|f\sigma|_{\tau^{(m)}}^p\sigma_{\tau^{(m)}}^{-p} (\mathbf 1_{S_m}\sigma)_{\tau^{(m)}}d\mu\\
&=&[w]^{p^\prime}_{A_{p}}\int_{E_m}|f|_{\tau^{(m)},\sigma}^p (\mathbf 1_{S_m}\sigma)_{\tau^{(m)}}d\mu,
\end{eqnarray*}
where we have used $ |f\sigma|_{\tau^{(m)}}=|f|_{_{\tau^{(m)},\sigma}}\sigma_{\tau^{(m)}}.$
Then
\begin{eqnarray*}
 \int_{S_m}|f\sigma|_{\tau^{(m)}}^pwd\mu
&\lesssim&[w]^{p^\prime}_{A_{p}}\int_{E_m}|f|_{\tau^{(m)},\sigma}^p(\mathbf 1_{S_m}\sigma)_{\tau^{(m)}}d\mu\\
&=&[w]^{p^\prime}_{A_{p}}\int_{E_m}|f|_{\tau^{(m)},\sigma}^p \mathbf 1_{S_m}\sigma d\mu\\
&\leq&[w]^{p^\prime}_{A_{p}}\int_{E_m}M^{\sigma}(f)^p \mathbf 1_{S_m}\sigma d\mu\\
&=&[w]^{p^\prime}_{A_{p}}\int_{S_m}M^{\sigma}(f)^p \sigma d\mu.
\end{eqnarray*}
It follows from \eqref{doob} and the boundedness of Doob's maximal operator $M^{\sigma}$ that
\begin{eqnarray*}
\int_{\Omega}{M}(f\sigma)^{p}wd\mu
&\lesssim&[w]^{p^\prime}_{A_{p}}\sum\limits_{m=0}^{+\infty}\int_{S_m}M^{\sigma}(f)^p \sigma d\mu\\
&=&[w]^{p^\prime}_{A_{p}}\int_{\Omega}M^{\sigma}(f)^p \sigma d\mu\\
&\lesssim&[w]^{p^\prime}_{A_{p}}\int_{\Omega}|f|^p\sigma d\mu,
\end{eqnarray*}
which implies 
\begin{eqnarray*}
\big(\int_{\Omega}{M}(f\sigma)^{p}wd\mu\big)^{\frac{1}{p}}
&\lesssim&[w]^{\frac{p^\prime}{p}}_{A_{p}}\big(\int_{\Omega}|f|^p\sigma d\mu\big)^{\frac{1}{p}}.
\end{eqnarray*}
Then we have $\|M\|\lesssim[w]^{\frac{1}{p-1}}_{A_p}.$
\end{proof}

\subsection{Further Two-Weight Applications of Sparse Domination}
We now focus on the application of sparse theory to quantitative two-weight estimates in Theorem \ref{thm-es}.
The proofs of these estimates rely essentially on sparse domination techniques. By establishing a pointwise sparse control of  Doob's maximal operator, we reduce the problem to estimating simple sparse operators. This approach provides a unified and transparent framework for handling all three cases in the theorem.
Before proceeding, we recall the following Lemma \ref{sa_test}, which was proved in \cite{MR2834220}.
It gives Sawyer's characterization \cite{MR676801} of the two-weight inequality for  Doob's maximal operator in the present setting.

\begin{lemma}\label{sa_test} \cite[Theorem 3.1]{MR2834220} Let $1<p<\infty$
and let $(u,w)$ be a pair of weights. Denote $\sigma=w^{-\frac{1}{p-1}}\in L^1.$ Then the following are
equivalent:
\begin{enumerate}
 \item \label{testno1} There exists a positive constant $C_1$ such that
\begin{equation}
\label{test1}\Big(\int_{\{\tau<\infty\}}\big(M(\sigma
\mathbf 1_{\{\tau<\infty\}})\big)^pud\mu\Big)^{\frac{1}{p}}
 \leq
C_1(\int_{\{\tau<\infty\}}\sigma d\mu\Big)^{\frac{1}{p}}, ~~ \forall \tau;
\end{equation}
\item\label{testno2}There exists a positive constant $C_2$ such that
\begin{equation}
\label{test2} \big (\mathbbm{E}({^*\sigma_n^p}u|\mathcal {F}_n)\big)^{\frac{1}{p}}
 \leq
C_2 (\sigma_n)^{\frac{1}{p}},  ~~\forall n\geq0,
\end{equation}
where ${^*\sigma_n}=\sup_{m\geq n}\sigma_m.$
\item\label{testno3}There exists a positive constant $C_3$ such that
\begin{equation}
\label{test3}\Big(\int_{\Omega}(Mf)^pud\mu\Big)^{\frac{1}{p}}\leq C_3\Big(\int_\Omega|f|^pw d\mu\Big)^{\frac{1}{p}},
~~ \forall f=(f_n)\in L^p(w).
\end{equation}
\end{enumerate}
Moreover, we denote the smallest positive constants $C_1,$ $C_2,$ and $C_3$ in \eqref{test1}, \eqref{test2}, and \eqref{test3}
by $[u,w]_{S_p}$, $[u,w]_{S^*_p}$, and $\|M\|_{L^p(w)\rightarrow L^p(u)}$, respectively. Then it follows that $[u,w]_{S_p}\approx[u,w]_{S^*_p}\approx\|M\|_{L^p(w)\rightarrow L^p(u)}.$
\end{lemma}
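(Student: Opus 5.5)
We prove the cycle \eqref{test3}$\Rightarrow$\eqref{test1}$\Rightarrow$\eqref{test2}$\Rightarrow$\eqref{test3}, tracking constants. This gives $[u,w]_{S_p}\le\|M\|_{L^p(w)\to L^p(u)}$, $[u,w]_{S^*_p}\le[u,w]_{S_p}$ and $\|M\|_{L^p(w)\to L^p(u)}\lesssim[u,w]_{S^*_p}$.

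\emph{Step 1: \eqref{test3}$\Rightarrow$\eqref{test1}.} Fix a stopping time $\tau$ and apply \eqref{test3} to $f=\sigma\mathbf 1_{\{\tau<\infty\}}$. Since $\sigma^pw=\sigma$, we get $\int|f|^pw\,d\mu=\int_{\{\tau<\infty\}}\sigma\,d\mu$. Restricting the left-hand side to $\{\tau<\infty\}$ then gives \eqref{test1} with $C_1\le C_3$.

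\emph{Step 2: \eqref{test1}$\Rightarrow$\eqref{test2}.} Fix $n$ and $B\in\mathcal F_n$, and let $\tau=n$ on $B$ and $\tau=\infty$ off $B$; this is a stopping time with $\{\tau<\infty\}=B$. For $m\ge n$ we have $B\in\mathcal F_m$, so $\mathbb E(\sigma\mathbf 1_B|\mathcal F_m)=\sigma_m\mathbf 1_B$, and hence $M(\sigma\mathbf 1_B)\ge {}^*\sigma_n$ on $B$. Then \eqref{test1} yields
\[
\int_B{}^*\sigma_n^p\,u\,d\mu\le C_1^p\sigma(B)=C_1^p\int_B\sigma_n\,d\mu .
\]
Since $B\in\mathcal F_n$ is arbitrary and $\sigma_n$ is $\mathcal F_n$-measurable, we conclude $\mathbb E({}^*\sigma_n^pu|\mathcal F_n)\le C_1^p\sigma_n$, which is \eqref{test2}.

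\emph{Step 3: \eqref{test2}$\Rightarrow$\eqref{test3}.} This is the main step. Writing $f$ as $f\sigma$, it suffices to show $\int M(f\sigma)^pu\lesssim C_2^p\int|f|^p\sigma$ for $f\ge0$.

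First, \eqref{test2} passes to stopping times: summing over the pieces $\{\tau=n\}$ (and using the trivial bound on $\{\tau=\infty\}$) gives
\[
\mathbb E({}^*\sigma_\tau^pu|\mathcal F_\tau)\le C_2^p\sigma_\tau .
\]

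Second, the domination of Theorem~\ref{thm:max_sparse} is built from unweighted averages, and its sets $E_m$ overlap, so it is not directly usable here. I would instead rerun the locally filter-shifted construction of Lemma~\ref{lem:max_induce} using $\sigma$-weighted averages. Set $\tau^{(0)}=0$ and
\[
\tau^{(m+1)}=\inf\{n\ge\tau^{(m)}: f_{n,\sigma}>2f_{\tau^{(m)},\sigma}\}.
\]
On $S_m$ we have $f_{n,\sigma}\le2f_{\tau^{(m)},\sigma}$ for all $n\ge\tau^{(m)}$. Using $(f\sigma)_n=f_{n,\sigma}\sigma_n$, this gives
\[
(f\sigma)_n\le 2f_{\tau^{(m)},\sigma}\,{}^*\sigma_{\tau^{(m)}} \quad\text{for } n\ge\tau^{(m)} .
\]
The times $n<\tau^{(m)}$ are controlled by the earlier generations, exactly as in Lemma~\ref{lem:max_essential}. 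Hence $M(f\sigma)\le2\sum_m f_{\tau^{(m)},\sigma}\,{}^*\sigma_{\tau^{(m)}}\mathbf 1_{S_m}$.

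Third, since the $S_m$ are disjoint and $f_{\tau^{(m)},\sigma}$ is $\mathcal F_{\tau^{(m)}}$-measurable, the stopping-time form of \eqref{test2} gives
\[
\int M(f\sigma)^pu\le2^p\sum_m\int_{E_m}f^p_{\tau^{(m)},\sigma}\,\mathbb E({}^*\sigma^p_{\tau^{(m)}}u|\mathcal F_{\tau^{(m)}})\le 2^pC_2^p\int\sum_m f^p_{\tau^{(m)},\sigma}\mathbf 1_{E_m}\,\sigma\,d\mu .
\]
Fourth, the overlap of the $E_m$ is controlled by the geometric growth built into the construction. At a point of $E_k\setminus E_{k+1}$, the values $f_{\tau^{(m)},\sigma}$ for $m\le k$ are at most $2^{-(k-m)}M_\sigma f$, so
\[
\sum_m f^p_{\tau^{(m)},\sigma}\mathbf 1_{E_m}\le(1-2^{-p})^{-1}(M_\sigma f)^p \quad\text{pointwise}.
\]
Doob's inequality in $L^p(\sigma)$ (in the notation \eqref{notation_weightedmax}) then finishes the proof with $C_3\lesssim C_2$.

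The delicate points are the bookkeeping in the second step, namely that the new stopping times are genuinely stopping times on the shifted filtrations and that the reduction to generations matches Lemma~\ref{lem:max_induce}, and the pointwise geometric-sum bound in the fourth step, which replaces conditional sparsity here.
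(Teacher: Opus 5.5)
\paragraph{The paper gives no proof.} The paper does not prove this lemma. It is quoted from \cite[Theorem 3.1]{MR2834220} and used as a black box: in the proof of Theorem~\ref{thm-es} only condition \eqref{test2}, in the form \eqref{test_sp}, is verified. So your proposal has to stand on its own.

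\paragraph{What is correct.} Steps~1 and~2 are correct. The architecture of Step~3 is also sound: stopping on the $\sigma$-averages $f_{n,\sigma}$, transferring the testing condition to stopping times, using the geometric growth of $f_{\tau^{(m)},\sigma}$ to absorb the overlap of the $E_m$, and finishing with Doob in $L^p(\sigma)$.

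\paragraph{The false step.} The pointwise bound $M(f\sigma)\le 2\sum_m f_{\tau^{(m)},\sigma}\,{}^*\sigma_{\tau^{(m)}}\mathbf 1_{S_m}$ is false. For $n<\tau^{(m)}$ you do get $f_{n,\sigma}\le f_{\tau^{(m)},\sigma}$. But the factor $\sigma_n$ in $(f\sigma)_n=f_{n,\sigma}\sigma_n$ is not controlled by ${}^*\sigma_{\tau^{(m)}}=\sup_{k\ge\tau^{(m)}}\sigma_k$. The ``key observation'' of Lemma~\ref{lem:max_essential} works only because there one stops the very quantity being maximized. Here you stop $f_{n,\sigma}$ but maximize $f_{n,\sigma}\sigma_n$.

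For a counterexample, let $\Omega=[0,1)$, $\mathcal F_0$ trivial, and $\mathcal F_n$ generated by the two halves for $n\ge1$. Put $\sigma=\varepsilon$, $f=1$ on $[0,\frac12)$ and $\sigma=1$, $f=\frac14$ on $[\frac12,1)$. For $\varepsilon<\frac1{12}$ one has $\tau^{(1)}=1$ and $\tau^{(2)}=\infty$ on the left half. So the left half lies in $S_1$ and your right-hand side equals $2\varepsilon$ there. Meanwhile $M(f\sigma)\ge(f\sigma)_0=\frac{\varepsilon}{2}+\frac18>2\varepsilon$.

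\paragraph{The repair.} Since $f\ge0$, the $\tau^{(m)}$ increase strictly where finite. So every $n$ lies in a window $\tau^{(m)}\le n<\tau^{(m+1)}$ with the point in $E_m$, and there $(f\sigma)_n\le 2f_{\tau^{(m)},\sigma}\,{}^*\sigma_{\tau^{(m)}}$. Hence
\[
M(f\sigma)^p\le 2^p\sup_m f^p_{\tau^{(m)},\sigma}\,{}^*\sigma^p_{\tau^{(m)}}\mathbf 1_{E_m}\le 2^p\sum_m f^p_{\tau^{(m)},\sigma}\,{}^*\sigma^p_{\tau^{(m)}}\mathbf 1_{E_m}.
\]
This is exactly the quantity your third step integrates; you pass to $E_m$ there anyway, since $S_m\notin\mathcal F_{\tau^{(m)}}$. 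The disjointness of the $S_m$ is therefore never needed, and the overlap is handled entirely by your fourth step. Your geometric bound $\sum_m f^p_{\tau^{(m)},\sigma}\mathbf 1_{E_m}\le(1-2^{-p})^{-1}(M_\sigma f)^p$ holds at every point, by applying it to the partial sums. With this replacement the cycle closes with $C_1\le C_3$, $C_2\le C_1$ and $C_3\lesssim C_2$.
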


\begin{remark}\label{rem_test} For $B\in \mathcal {F}_n,$ we have
\begin{align}\label{eq_test}
\int_B\mathbbm{E}\big({^*\sigma}_n^pu|\mathcal
{F}_n\big)d\mu&=\int_B{^*\sigma_n^pud\mu}=\int_B{^*\sigma_n^p \mathbf 1_{B}ud\mu}=\int_B{{^*M}_n(\sigma\mathbf 1_{B})^p ud\mu}.
\end{align}
Thus, Item \eqref{testno2} of Lemma \ref{sa_test} is equivalent to that
there exists a positive constant $C$ such that
\begin{equation}\label{test_sp}
\int_B{{^*M}_n(\sigma\mathbf 1_{B})^p ud\mu}\leq C^p\int_B\sigma
d\mu,~\forall~B\in \mathcal{F}_n.
\end{equation}
This inequality characterizes the condition \( S_p^* \) as given in \eqref{bi-SP} of Definition \ref{definition Sp}. We may therefore unambiguously denote the smallest such constant \( C \) satisfying \eqref{test2} by \( [u, \sigma]_{S_p^*} \).
\end{remark}

\begin{proof}[Proof of Theorem \ref{thm-es}]
In view of Theorem \ref{sa_test} and Remark \ref{rem_test}, it suffices to check \eqref{test_sp}.
Employing Theorem \ref{thm:max_sparse} for $ \sigma \mathbf 1_B,$ we obtain a sparse sequence of stopping times $\mathcal{T}=\{ \tau^{(m)}\}_{m\geqslant 0}$ such that
\begin{equation*}
\mathbf 1_{B} \cdot {^*M}_n (\sigma \mathbf 1_{B}) \lesssim \sum\limits_{m=0}^{+\infty} (\sigma \mathbf 1_B)_{\tau^{(m)}} \mathbf 1_{E_m\setminus E_{m+1}}=\sum\limits_{m=0}^{+\infty} (\sigma \mathbf 1_B)_{\tau^{(m)}} \mathbf 1_{S_m}.
\end{equation*} 
Because $B$ is fixed,
$\mathbf 1_B$ is suppressed in the notation. The sets $ S_m$ are pairwise disjoint, hence,
\begin{equation}\label{eq:doobmax-domi}
\int _{B} {^*M}_n (\sigma \mathbf 1_{B}) ^{p} u d\mu
 \lesssim
\sum\limits_{m=0}^{+\infty} \int_{E_m}\sigma_{\tau^{(m)}} ^{p} ud\mu .
\end{equation}

{Proof of \eqref{Bound1}.} It follows from the definition of $B_p$ that
\begin{eqnarray*} \int_{E_m}\sigma_{\tau^{(m)}} ^{p} u d\mu
&=& \int_{E_m}\sigma_{\tau^{(m)}} ^{p} u_{\tau^{(m)}}  d\mu\\
&\leq&[u,w]_{B_p}\int_{E_m}\exp(\mathbb (\log\sigma)_{\tau^{(m)}})d\mu.\end{eqnarray*}
Note that
\begin{eqnarray*}
\int_{E_m}\exp(\mathbb (\log\sigma)_{\tau^{(m)}})d\mu
&=&\int_{E_m}\exp(\mathbb (\log\sigma)_{\tau^{(m)}})\mathbf 1_{E_m}d\mu.\end{eqnarray*}
It follows that
\begin{eqnarray*}
 \int_{E_m}\sigma_{\tau^{(m)}} ^{p} u d\mu
&\lesssim&[u,w]_{B_p}\int_{E_m}\exp(\mathbb (\log\sigma)_{\tau^{(m)}})(\mathbf 1_{S_m})_{\tau^{(m)}}d\mu\\
&=&[u,w]_{B_p}\int_{E_m}\exp(\mathbb (\log\sigma)_{\tau^{(m)}})1_{S_m}d\mu\\
&=&[u,w]_{B_p}\int_{S_m}\exp(\mathbb (\log\sigma)_{\tau^{(m)}})d\mu.\end{eqnarray*}
Using Jensen's inequality for conditional expectation, for any $q>1,$ we have
$$\exp( (\log\sigma)_{\tau^{(m)}})\mathbf 1_{S_m}
\leq \big((\sigma^{\frac{1}{q}})_{\tau^{(m)}}\big)^q\mathbf 1_{S_m}
\leq  {^*M}_n(\sigma^{\frac{1}{q}})^q\mathbf 1_{S_m}.$$
Then
\begin{eqnarray*}\int _{B} {^*M}_n (\sigma \mathbf 1_{B}) ^{p} w d\mu
&\lesssim&[u,w]_{B_p}\sum\limits_{m=0}^{+\infty} \int_{S_m} {^*M}_n(\sigma^{\frac{1}{q}})^qd\mu.\\
&\leq&[u,w]_{B_p}\int_{B} {^*M}_n(\sigma^{\frac{1}{q}})^qd\mu.\end{eqnarray*}
Combining it with the boundedness of Doob's maximal operator, we deduce that
\begin{eqnarray*}
\int _{B} {^*M}_n (\sigma \mathbf 1_{B}) ^{p} w d\mu
&\lesssim&[u,w]_{B_p}(q')^{q}\int_{B}\sigma d\mu.
\end{eqnarray*}
Letting $q\rightarrow+\infty,$ we obtain $(q')^{q}\rightarrow e.$ Thus
\begin{eqnarray*}\int_{B}{^*M_n}(\sigma)^{p}u d\mu\lesssim[u,w]_{B_p}\sigma(B).
\end{eqnarray*}

{Proof of \eqref{Bound2}.} It follows from the definition of $A_p$ that
\begin{eqnarray*}\int_{E_m}\sigma_{\tau^{(m)}} ^{p} u d\mu
&=& \int_{E_m}\sigma_{\tau^{(m)}} ^{p} u_{\tau^{(m)}}  d\mu\\
&\lesssim&[u,w]_{A_p}\int_{E_m}\sigma_{\tau^{(m)}}d\mu.\end{eqnarray*}
Note that $\int_{E_m}\sigma_{\tau^{(m)}}d\mu=\int_{E_m}\sigma_{\tau^{(m)}}\mathbf 1_{E_m}d\mu.$
It follows that
\begin{eqnarray*}
\int_{E_m}\sigma_{\tau^{(m)}}\mathbf 1_{E_m}d\mu
&\lesssim&\int_{E_m}\sigma_{\tau^{(m)}}(\mathbf 1_{S_m})_{\tau^{(m)}}d\mu\\
&=&\int_{S_m}\mathbb\sigma_{\tau^{(m)}}d\mu\\
&\leq&\int_{S_m}{^*M_{n}}(\sigma)d\mu.
\end{eqnarray*}
Then
\begin{eqnarray*}\int _{B} {^*M}_n (\sigma \mathbf 1_{B}) ^{p} w d\mu
&\lesssim&[u,w]_{A_p}\sum\limits_{m=0}^{+\infty} \int_{S_m}{^*M_{n}}(\sigma)d\mu.\\
&\leq&[u,w]_{A_p}\int_{B}{^*M_{n}}(\sigma)d\mu\\
&=&[u,w]_{A_p}\int_{B}(\frac{{^*M_n\sigma}}{\sigma_n})_n\cdot\sigma_nd\mu.
\end{eqnarray*}
Because of $\sigma\in A^*_{\infty},$ we have
\begin{eqnarray*}\int _{B} {^*M}_n (\sigma \mathbf 1_{B}) ^{p} w d\mu\lesssim[u,w]_{A_p}[\sigma]_{A^*_{\infty}}\sigma(B).\end{eqnarray*}

{Proof of \eqref{Bound3}.} 
For $a\in \mathbb Z,$ define $$\mathcal{T}^a=\{\tau^{(m)}\in\mathcal{T}:2^{a-1}
<\mathop{\hbox{esssup}}\limits_{E_m}(\sigma_{\tau^{(m)}} ^{p-1} w_{\tau^{(m)}}\mathbf 1_{E_m})\leq2^a\}.$$
It follows from H\"{o}lder's inequality that $1= \mathbb E_n(w^{\frac{1}{p}}w^{-\frac{1}{p}})^p\leq \mathbb E_n(w)\mathbb E_n(\sigma)^{p-1}\leq [w]_{A_p}, $ for any $n\in \mathbb N.$ Setting $K=[\log_2[w]_{A_p}]+1,$ we have
$$\mathcal{T}=\bigcup\limits_{a=0}^{K}\mathcal{T}^a.$$
Let $u=w$ in \eqref{eq:doobmax-domi}.  Then we obtain that
 \begin{eqnarray*}\int _{B} {^*M}_n (\sigma \mathbf 1_{B}) ^{p} w d\mu
&\lesssim& 
\sum\limits_{m=0}^{+\infty} \int_{E_m}\sigma_{\tau^{(m)}} ^{p} w d\mu\\
&=&\sum\limits_{m=0}^{+\infty} \int_{E_m}\sigma_{\tau^{(m)}} ^{p} w_{\tau^{(m)}}  d\mu.\end{eqnarray*}
Note that
\begin{eqnarray*}\Big(\sigma_{\tau^{(m)}} ^{p} w_{\tau^{(m)}}\Big)\mathbf 1_{E_m}
\leq\sigma_{\tau^{(m)}}\mathbf 1_{E_m}\mathop{\hbox{esssup}}\limits_{E_m}(\sigma_{\tau^{(m)}} ^{p-1} w_{\tau^{(m)}}\mathbf 1_{E_m})
.\end{eqnarray*}
It follows that
\begin{eqnarray*}
\sum\limits_{m=0}^{+\infty} \int_{E_m}\sigma_{\tau^{(m)}} ^{p} w_{\tau^{(m)}}  d\mu
&\leq&\sum\limits_{a=0}^{K}2^a\sum\limits_{\tau^{(m)}\in \mathcal{T}^a}\int_{E_m}
\mathbb \sigma_{\tau^{(m)}}d\mu.\end{eqnarray*}
Therefore,
\begin{eqnarray*}\sum\limits_{\tau^{(m)}\in \mathcal{T}^a}\int_{E_m} \sigma_{\tau^{(m)}} d\mu
&=&\sum\limits_{\tau^{(m)}\in \mathcal{T}^a}\int_{E_m} \sigma_{\tau^{(m)}} \mathbf 1_{E_m}d\mu \\
&\lesssim&\sum\limits_{\tau^{(m)}\in \mathcal{T}^a}\int_{E_m} \sigma_{\tau^{(m)}} (\mathbf 1_{S_m})_{\tau^{(m)}}d\mu \\
&=&\sum\limits_{\tau^{(m)}\in \mathcal{T}^a}\int_{E_m} \sigma_{\tau^{(m)}}\mathbf 1_{S_m}d\mu.
\end{eqnarray*}
Denoting $m(a)=\inf\{m:\tau^{(m)}\in \mathcal T _{a}\},$  we have $ \sigma_{\tau^{(m)}}\cdot\mathbf 1_{S_m}\leq{^*M_{\tau^{(m(a))}}\sigma}\cdot\mathbf 1_{S_m},$ where $\tau^{(m)}\in \mathcal T _{a}.$
Thus
\begin{eqnarray*}\sum\limits_{\tau^{(m)}\in \mathcal{T}^a}\int_{E_m} \sigma_{\tau^{(m)}} d\mu
&\lesssim&\sum\limits_{\tau^{(m)}\in \mathcal{T}^a}\int_{S_m} {^*M_{\tau^{(m(a))}}\sigma}d\mu\\
&\leq&\int_{E_{m(a)}} {^*M_{\tau^{(m(a))}}\sigma}d\mu.\end{eqnarray*}

Furthermore, we derive the bound
\begin{eqnarray*}\int _{B} {^*M}_n (\sigma \mathbf 1_{B}) ^{p} w d\mu
&\lesssim& \sum\limits_{a=0}^{K}2^a\int_{E_{m(a)}} {^*M_{\tau^{(m(a))}}\sigma}d\mu\\
&\lesssim& \sum\limits_{a=0}^{K}
\int_{E_{m(a)}}\sigma_{\tau^{(m(a))}} ^{p-1} w_{\tau^{(m(a))}}\cdot(\frac{{^*M_{\tau^{(m(a))}}\sigma}}{\sigma_{\tau^{(m(a))}}})_{\tau^{(m(a))}}\cdot\sigma_{\tau^{(m(a))}}d\mu.\end{eqnarray*}
By \eqref{mixcon1} the definition of $(A_{p'})^{\frac{1}{p'}}(A^*_{\infty})^{\frac{1}{p}},$ we have
\begin{eqnarray*}\int _{B} {^*M}_n (\sigma \mathbf 1_{B}) ^{p} w d\mu
&\lesssim& [\sigma]^p_{(A_{p'})^{\frac{1}{p'}}(A^*_{\infty})^{\frac{1}{p}}}\sum\limits_{a=0}^{K}\int_{E_{m(a)}}\sigma_{\tau^{(m(a))}}d\mu\\
&=& [\sigma]^p_{(A_{p'})^{\frac{1}{p'}}(A^*_{\infty})^{\frac{1}{p}}}\sum\limits_{a=0}^{K}\int_{E_{m(a)}}\sigma d\mu\\
&\leq& [\sigma]^p_{(A_{p'})^{\frac{1}{p'}}(A^*_{\infty})^{\frac{1}{p}}}\sum\limits_{a=0}^{K}\int_{B}\sigma d\mu\\
&=&(K+1) [\sigma]^p_{(A_{p'})^{\frac{1}{p'}}(A^*_{\infty})^{\frac{1}{p}}}\sigma(B).\end{eqnarray*}
Combining the previous estimates, we conclude that
\begin{eqnarray*}
\int _{B} {^*M}_n (\sigma \mathbf 1_{B}) ^{p} w d\mu
&\lesssim& [\sigma]^p_{(A_{p'})^{\frac{1}{p'}}(A^*_{\infty})^{\frac{1}{p}}}(3+\log_2[w]_{A_p})\sigma(B)\\
&\lesssim& [\sigma]^p_{(A_{p'})^{\frac{1}{p'}}(A^*_{\infty})^{\frac{1}{p}}}(1+\log_2[w]_{A_p})\sigma(B),
\end{eqnarray*}
which completes the proof of  Theorem \ref{thm-es}.
\end{proof}

\bibliographystyle{alpha,amsplain}	

\end{document}